\newcounter{newcounter}
\newcommand{\institute}[1]{\\{\scriptsize
		\begin{tabular}[t]{@{\footnotesize}c@{}}#1\end{tabular}}}
\newcommand{\email}[1]{\\{\scriptsize\tt #1}}
\title{Classifying different criteria for learning algebraic structures}
\author{
	\small{Nikolay Bazhenov$^{2,5}$}
	\email{nickbazh@yandex.ru}
	\and
	\small{Vittorio Cipriani$^{6}$}
	\email{vittorio.cipriani17@gmail.com}
	\and
	\small{Sanjay Jain$^3$}
	\email{sanjay@comp.nus.edu.sg}
	\and \ \ \ \ \ \  \ 
	\small{Luca San Mauro$^1$}
\ \ \  \ \ \ 	\email{luca.sanmauro@gmail.com}
\and \and \ 
	\small{Frank Stephan$^{3,4}$}
	\email{fstephan@comp.nus.edu.sg}
	\and
	\vspace{-1cm}
	\and
	\institute{
		$^1$ Department of Philosophy, University of Bari\\
	    $^2$ Kazan Federal University, Kazan, Russia\\
	    $^3$ School of Computing, National University of Singapore\\
	    $^4$ Department of Mathematics, National University of Singapore\\
		$^5$ Sobolev Institute of Mathematics, Novosibirsk, Russia\\
		$^6$ Institute of Discrete Mathematics and Geometry, Technische Universit{\"a}t Wien, Austria
	}
}
\date{}
\tikzstyle{tikzfig}=[baseline=-0.25em,scale=0.5]
\tikzstyle{none}=[inner sep=0mm]
\tikzstyle{every loop}=[]
\tikzset{>={latex[width=1mm,length=1mm]}}
\newcommand{\drawThm}[3]{ \draw #1 
	\ifx&#2&
	\else
	node[style=thmref, pos=0.01] {\resizebox{5mm}{3mm}{#2}} 
	\fi
	\ifx&#3&
	\else
	node[style=thmref, pos=0.99] {\resizebox{5mm}{3mm}{#3}}
	\fi ;
}
\tikzstyle{box}=[fill={rgb,255: red,228; green,228; blue,228}, draw=black, shape=rectangle]
\tikzstyle{reducible}=[->, dashed]
\tikzstyle{strictreducible}=[->]
\tikzstyle{nonreducible}=[->, draw=red]
\tikzstyle{uncomp}=[draw=red, <->]
\tikzstyle{thmref}=[opacity=0,inner sep=2mm]
\newcommand{\Fin}{\mathbf{Fin}}
\newcommand{\K}{\mathfrak{K}}
\newcommand{\defas}{:=}
\DeclareMathOperator{\A}{\mathcal{A}}
\DeclareMathOperator{\esse}{\mathcal{S}}
\DeclareMathOperator{\B}{\mathcal{B}}
\newcommand{\baire}{\nats^{<\nats}}
\newcommand{\nats}{\mathbb{N}}
\newcommand{\ldofk}{\mathrm{LD}(\K)}
\newcommand{\col}{\mathbf{co}}
\newcommand{\learnerM}{\mathbf{M}}
\newcommand{\Lomega}{\mathcal{L}_{\omega_1\omega}}
\newcommand{\equalitynats}{=_\nats}
\DeclareMathOperator{\range}{range}
\newcommand{\ex}{\mathbf{Ex}}
\newcommand{\code}[1]{\ulcorner #1 \urcorner}
\newcommand{\learnreducible}[1]{\leq_{\mathrm{Learn}}^{#1}}
\newcommand{\notlearnreducible}[1]{\not\leq_{\mathrm{Learn}}^{#1}}
\newcommand{\strictlearnreducible}[1]{<_{\mathrm{Learn}}^{#1}}
\newcommand{\strictlearnincomparable}[1]{|_{\mathrm{Learn}}^{#1}}
\newcommand{\function}[3]{#1 : #2 \rightarrow #3}
\newcommand{\Baire}{\nats^\nats}
\newcommand{\learnequiv}[1]{\equiv_{\mathrm{Learn}}^{#1}}
\newcommand{\Id}{Id}
\newcommand{\Cantor}{2^{\nats}}
\newcommand{\cantor}{2^{<\nats}}
\newcommand{\nonushape}{\mathbf{nUs}}
\DeclareMathOperator*{\bigdoublewedge}{\bigwedge\mkern-15mu\bigwedge}
\DeclareMathOperator*{\bigdoublevee}{\bigvee\mkern-15mu\bigvee}
\newcommand{\sigmainf}[1]{\Sigma_{#1}^{\mathrm{inf}}}
\newcommand{\piinf}[1]{\Pi_{#1}^{\mathrm{inf}}}
\newcommand{\str}[1]{\langle #1 \rangle}
\newcommand{\bc}{\mathbf{Bc}}
\newcommand{\pl}{\mathbf{PL}}
\newcommand{\decisive}{\mathbf{Dec}}
\newcommand{\finitary}{\mathrm{fin}}
\newcommand{\thsigma}[2]{\mathrm{Th}_{\Sigma_{#1}^\mathrm{inf}}(#2)}
\newcommand{\erange}{E_{range}}
\newcommand{\eset}{E_{set}}
\newtheorem{theorem}{Theorem}[section]
\newtheorem{proposition}[theorem]{Proposition}
\newtheorem{lemma}[theorem]{Lemma}
\newtheorem{corollary}[theorem]{Corollary}
\theoremstyle{definition}
\newtheorem{definition}[theorem]{Definition}
\newtheorem{remark}[theorem]{Remark}
\begin{document}
	\maketitle
	{\let\thefootnote\relax\footnote{{
		 \emph{Mathematics Subject Classification 2020}: 68Q32, 03E15.\\
		\emph{Keywords and phrases}: Inductive inference, Algorithmic learning theory, Infinitary logic, Continuous reducibility.

  The work of Bazhenov was supported by the Russian Science Foundation (project no. 24-11-00227). Cipriani was supported by the Austrian Science Fund FWF, Project P 36781. S.~Jain and F.~Stephan were supported by Singapore Ministry of Education (MOE) AcRF Tier 2 grant MOE-000538-00. Additionally, S.~Jain was supported by NUS grant E-252-00-0021-01. San Mauro is a member of INDAM-GNSAGA.
  }}}
		
		\begin{abstract}
			In the last years there has been a growing interest in the study of learning problems associated with algebraic structures. The framework we use models the scenario in which a learner is given larger and larger fragments of a structure from a given target family and is required to output an hypothesis about the structure's isomorphism type. So far researchers focused on $\ex$-learning, in which the learner is asked to eventually stabilize to the correct hypothesis, and on restrictions where the learner is allowed to change the hypothesis a fixed number of times. Yet, other learning paradigms coming from classical algorithmic learning theory remained unexplored. We study the \lq\lq learning power\rq\rq\ of such criteria, comparing them via descriptive-set-theoretic tools thanks to the novel notion of $E$-learnability.  The main outcome of this paper is that such criteria admit natural syntactic characterizations in terms of infinitary formulas analogous to the one given for $\ex$-learning in \cite{bazhenov2020learning}. Such characterizations give a powerful method to understand whether a family of structure is learnable with respect to the desired criterion.
		\end{abstract}
		%\tableofcontents

		\section{Introduction}
		This paper aims to advance the study of algorithmic learning theory for algebraic structures considering new learning criteria and providing a syntactic characterization of them.
		
		Algorithmic learning theory has its roots in the work of Gold \cite{Gold67} and Putnam \cite{putnam1965trial} in the 1960s and it encompasses various formal frameworks for the inductive inference. In a broad sense, this research program models how a learner might acquire systematic knowledge about a given environment by accessing growing volumes of data. Classical paradigms primarily concentrated on inferring formal languages or computable functions (see, e.g., \cite{lange2008learning,zz-tcs-08}). In order to understand which families can be learnt with respect to a given paradigm, researchers focused on combinatorial characterizations of learnability (see e.g.,(\cite{ANGLUIN1980117}).
		
		Recently, there has been an increase of interest in learning data that carries structural content, with a focus on well-known classes of algebraic structures, like vector spaces, rings, trees, and matroids, (see, e.g., \cite{SV01,MS04,HaSt07,GaoStephan-12,FKS-19}). The framework we use is defined in Section \ref{sec:our_learning_paradigm} and was introduced and later refined in a series of papers \cite{bazhenov2020learning,bazhenov2021learning}. It draws upon concepts and techniques from computable structure theory: in a nutshell, a \emph{learning problem} consists of a countable family $\K$ of nonisomorphic countable structures; a \emph{learner}\ is an agent provided with increasingly larger portions of an isomorphic copy of a structure $\esse$ from $\K$ and, at each stage, is required to output a conjecture about the isomorphism type of $\esse$. We highlight that the learner has no complexity- or computability-theoretic restrictions (we refer the reader interested in how such restrictions affect the learning framework to \cite{bazhenov2021turing}). 
		
		So far researchers in this area mostly focused on $\ex$-learnability: A family of structures is \emph{$\ex$-learnable} if there exists a learner that, in the limit, stabilizes to the correct conjecture. \cite[Theorem 3.1]{bazhenov2020learning} gives a nice syntactic characterization of $\ex$-learnability in terms of $\sigmainf{2}$ \emph{infinitary formulas} (introduced in Section \ref{sec:infinitary}). That is, a family of structure $\K:=\{\A_i:i \in \nats\}$ is $\ex$-learnable if and only if there exist $\sigmainf{2}$ formulas $\{\varphi_i : i \in \nats \}$ such that
		$\A_i\models \varphi_j \Leftrightarrow i=j$.

		We aim to consider other learning criteria, and we show that they admit \lq\lq nice\rq\rq\ syntactic characterizations. The fact that we could find such natural characterization support the claim that the learning paradigms we are considering (many of which come from classical algorithmic learning theory) are indeed natural.
		
		\thref{theorem:sigma1strongantichain} shows that, considering $\sigmainf{1}$ infinitary formulas instead of $\sigmainf{2}$ infinitary formulas, one obtains a syntactic characterization for learning without
		mind changes, also known as $\Fin$-learnability, analogous
		to the one given for $\ex$-learnability.

		The characterizations of $\ex$-learnability and $\mathbf{Fin}$-learnability described so far are obtained considering partial orders whose elements are the structures in the family ordered with respect to the inclusion of the $\sigmainf{1}$- and $\sigmainf{2}$-theories, where, in general, the $\sigmainf{n}$-theory of a structure is the set of $\sigmainf{n}$ formulas that are true in the structure. For the case of $\ex$-learnability and $\Fin$-learnability we call the corresponding partial orders $\sigmainf{2}$- and $\sigmainf{1}$-\emph{strong antichains}.  
		
		Natural weakenings of $\sigmainf{1}$- and $\sigmainf{2}$-\emph{strong antichains} are $\sigmainf{1}$- and $\sigmainf{2}$\emph{-antichains}: here we ask that any two structures in the family are pairwise incomparable respectively to the inclusion of the $\sigmainf{1}$- and $\sigmainf{2}$- theories.
		
		Surprisingly, \thref{theorem:12antichains} and \thref{theorem:plcharacterization} show that these two partial orders exactly characterize two natural learning paradigm already considered in the context of classical inductive inference, namely \emph{co-learnability} (\thref{definition:colearning}) and \emph{partial learnability} (\thref{definition:pl}), denoted respectively by $\col$ and $\pl$. In the first, the only conjecture that a learner does not output is the correct one, whereas in the second, the only conjecture that a learner outputs infinitely many times is the correct one.
		
		As we are interested in understanding the behavior of learning paradigms coming from classical inductive inference, we also study two natural restrictions of $\ex$-learning namely $\nonushape$- and $\decisive$-learnability (\thref{definition:nonushapedec}), where $\nonushape$ stands for \emph{non U shaped} while $\decisive$ for \emph{decisive}. The converge criterion for the first one is the same of $\ex$-learning except that a learner cannot change its mind once it outputs for the first time the correct conjecture. Decisive learning restricts this behavior not allowing the learner to get back to a previously abandoned conjecture. It turns out that the two paradigms in our context learn the same families (\thref{proposition:nonushapedecequiv}) and \thref{theorem:solidposets} provides a characterization for such paradigms in terms of \emph{solid $\sigmainf{1}$-partial orders} (\thref{definition:solidposets}). Informally, a solid $\sigmainf{1}$\-partial order requires that any structure $\A$ in the family has a $\sigmainf{1}$-formula separating $\A$ from its lower cone, i.e., from the other structures in the family whose $\sigmainf{1}$ theory is properly contained in $\thsigma{1}{\A}$.

		The next natural question is to ask for what happens if we consider $\sigmainf{1}$- and $\sigmainf{2}$-\emph{partial orders}, in which we just ask that the inclusion of $\sigmainf{1}$- and $\sigmainf{2}$-theories is a partial order on the family. Notice that $\sigmainf{n}$-partial orders, in general, have been deeply investigated in \cite{ciprianimarconesanmauro}. It turns out that these paradigms do not come (up to our knowledge) from ones studied in the classical context but from the novel equivalence relations of \emph{$E$-learnability}: this notion has been introduced in \cite{bazhenov2021learning} to calibrate the complexity of nonlearnable families borrowing ideas from \emph{descriptive set theory} (we will say more about this in Section \ref{sec:elearnability}). Actually, we will be using a slightly different variation of $E$-learnability (\thref{definition:elearnability}): \thref{remark:elearning} motivates that our notion is more natural and corrects a mistake in \cite{bazcipsan_cie}. One of the aims of this paper is to investigate how this new notion of learnability relates to learning paradigms coming from inductive inference. The initial step in \cite{bazhenov2021learning} was to show that $\ex$-learnability has a natural descriptive set-theoretic interpretation, namely \cite[Theorem 3.1]{bazhenov2021learning} shows that a countable family of structures $\K$ is $\ex$-learnable if and only if the isomorphism relation
		associated with $\K$ is continuously reducible to the relation $E_0$ of eventual agreement on infinite sequence of natural numbers. Notice that $E_0$ has a pivotal role in the context of descriptive set theory (see e.g.\ \cite{HKL}): as already observed in \cite{bazhenov2021learning}, this may serve as evidence supporting the naturalness of the learning framework.
		This result unlocked a natural way of stratifying learning problems: it suffices to replace $E_0$ with equivalence relations of lower or higher complexity to obtain weaker or stronger notions of learnability. More precisely, we say that a family of structures $\K$ is \emph{$E$-learnable}, for an equivalence relation $E$, if there is a continuous reduction from the isomorphism relation associated with $\K$ to $E$ (see Section \ref{sec:elearnability}).
		
		Getting back to the question of which learning criteria are characterized by $\sigmainf{1}$- and $\sigmainf{2}$-partial orders, we obtain that these are respectively $\erange$-learnability and $\eset$-learnability (see Section \ref{sec:elearnability} for their definitions). The results about $\sigmainf{1}$- and $\sigmainf{2}$-partial orders were also obtained in \cite{ciprianimarconesanmauro}, but we provide an alternative proof of the correspondence between being $\erange$-learnable and being a $\sigmainf{1}$-partial orders in \thref{theorem:characterization_Erange}. Notice that $\erange$- and $E_0$-learnability are the first examples of two incomparable learning criteria in our learning hierarchy.
		
		The paper is organized as follows. Section \ref{sec:preliminaries} gives the necessary preliminaries. Section \ref{sec:sigma1} and  Section \ref{sec:sigma2} respectively treat those learning criteria whose characterization can be given in terms of $\sigmainf{1}$ and $\sigmainf{2}$ partial orders and how these learning criteria relate to ones already present in the literature. Section \ref{sec:conclusions} draws some further direction on these topics.

		\section{Preliminaries}
		\label{sec:preliminaries}

		\subsection{Sequences and structures}
		\label{prel:structures}
		
		We denote with $\cantor$ and $\baire$ respectively the set of all finite sequences of $\{0,1\}$ and the set of all finite sequences of natural numbers. The following definitions are given for elements of $\baire$ but similar definitions hold for elements of $\cantor$. Given $\sigma \in \baire$ we denote by $|\sigma|$ the length of $\sigma$: $|\cdot|$ is also used to denote the cardinality of a set. For $m < |\sigma|$, we denote the $m$-th element of $\sigma$ by $\sigma(m)$ and by $\sigma[m]$ the finite sequence having elements $\sigma(0),\dots,\sigma(m)$. The concatenation of two finite sequences $\sigma,\tau$ is denoted by $\sigma^\frown \tau$.
		We let $\str{i,j}$ denote a computable 1-1 mapping from $\nats \times \nats$ to $\nats$.

		The \emph{Cantor space} (denoted with $\Cantor$) and the \emph{Baire space}\ (denoted with $\Baire$) are represented as the collection of infinite binary sequences (respectively, of infinite sequence on natural numbers) equipped with the product topology of the discrete topology on $\{0,1\}$ ($\nats$). The forthcoming
		definitions are given for elements of $\Baire$ or $\baire$, but, similar definitions hold for elements of $\Cantor$ or $\cantor$. Given $p \in \Baire$ and $m \in \nats$, the definitions of $p(m)$ and $p[m]$ follow the ones for finite sequences. We denote by $i^j$ the sequence made of $j$-many $i$'s: in case $j=1$ we just write $i$ and we denote by $i^\nats$ the infinite sequence with constant value $i$. Given $p,q \in \Baire$, \emph{join} of $p$ and $q$, denoted by $p \oplus q$ is the element of $\Baire$ such that for any $i \in \nats$ $(p\oplus q)(2i):=p(i)$ and $(p\oplus q)(2i+1):=q(i)$. We will also consider elements of the product space $\nats^{\nats \times \nats}$: given $p \in \nats^{\nats \times \nats}$, we denote by $p^{[m]}$ the infinite sequence representing the $m$-th column of $p$, that is $p(\str{m,\cdot})$.

		Through this paper the structures we consider always have domain $\nats$ and are defined on a finite relational signature. For $s \in \nats$, we denote by $\esse\restriction_s$ the finite substructure of $\esse$ having domain $\{0,\dots,s\}$. Given two structures $\A$ and $\B$ we write $\A \hookrightarrow \B$ to denote that there exists an embedding of $\A$ in $\B$, and we say that $\A$ and $\B$ are \emph{copies} of each other if they are isomorphic. As often done in computable structure theory, we represent a structure $\A$ via its \emph{atomic diagram}, i.e., the collection of atomic formulas which are true in $\A$. Up to a suitable G{\"o}del numbering of the formulas in the language of $\A$, the atomic diagram of $\A$ can be considered as an element $p \in\Cantor$. In other words, we will have that $p(i)=1$ if the atomic formula having G{\"o}del number $i$ is satisfied by $\A$, and $p(i)=0$ otherwise. For more on computable structure theory we refer the reader to \cite{AK00,montalban2005beyond,Mon-Book}.
		In some cases, we need to work explicitly with a family of structures: most of the structures we consider are \emph{partial orders}, \emph{linear orders} and \emph{graphs}.
		
		A partial order $(P,\leq_P)$ is a structure having domain $P$ and a binary relation $\leq_P$ satisfying reflexivity, transitivity and antisymmetry. Given $a,b \in P$ we write $a <_P b$ to denote that $a \leq_P b$ and $b \nleq_P a$. A linear order is a partial order in which any two elements are comparable. Since no confusion should arise, we denote a partial order $(P, \leq_P)$ just by $P$.
		
		For a linear order $L$ we denote by $\leq_L$ the corresponding ordering relation of $L$ and given two elements $a,b \in L$ we write $a <_L b$ to denote that $a \leq_L b$ and $b \nleq_L a$. We use the following notations (some of them have already been used in the introduction): $\omega$ and $\omega^*$ are respectively the linear orders having order type of the natural numbers and of the negative integers, $\zeta$ is the linear order having order type of the integers, and for $n>1$, $L_n$ is the finite linear order with precisely $n$ elements.
		
		In our setting, requiring a structure to have domain $\nats$ is not a concern as long as it is infinite. On the other hand, for some of our proofs, it is convenient to consider finite linear orders. To avoid the problems of cardinality, we define the following operation:
		
		\begin{definition}
			\thlabel{definition:tilde}
			Given a partial order $L$, we define $\tilde{L}$ to be the partial order consisting of $L$ plus infinitely many pairwise incomparable elements which are also incomparable to the elements of $L$.
		\end{definition}
		
		We now give the necessary definitions for {graphs}. We will consider only graphs that are countable undirected and without self-loops. That is, a graph $G$ is a structure having domain a set of \emph{vertices} $V$ and binary relation $E$ satisfying anti-reflexivity and symmetry; a pair $(v, w) \in E$ is called an \emph{edge}. We will denote by $V(G)$ and $E(G)$ respectively the vertices and the edges of $G$. In this paper we assume all the graphs to  be countable, undirected and without self-loops. 
		
		We define the \emph{one-way infinite ray} (denoted by $\mathsf{R}$) and the \emph{infinite isolated graph} (denoted by $\mathsf{I}$) as the graphs having as vertices the natural numbers and edges respectively $\{(i,i+1): i \in \nats\}$ and $\emptyset$. We define the
		\begin{itemize}
			\item  \emph{ray of size} $n$, for $n \geq 2$, (denoted by $\mathsf{R}_n$) as the graph having $V(\mathsf{R}_n)=\{i : i < n\}$ and  $E(\mathsf{R}_n)=\{(i,i+1):i < n-1\}$;
			\item  \emph{cycle of size} $n$, for $n \geq 3$, (denoted by $\mathsf{C}_n$) as the graph having $V(\mathsf{C}_n)=\{i : i < n\}$ and $E(\mathsf{C}_n)=\{(i,i+1):i < n-1\} \cup \{(0,n-1)\}$;
			\item \emph{isolated graph of size} $n$, for $n \geq 0$ (denoted by $\mathsf{I}_n$) as the graph having $V(\mathsf{I}_n)=\{i: i<n\}$ and $E(\mathsf{I}_n)=\emptyset$.
		\end{itemize}
		
		Similarly to what we have done for finite linear orders, to avoid problems of cardinality when using finite graphs we introduce the following operation.
		Given two graphs $G_0$ and $G_1$ we define the \emph{disjoint union of $G_0$ and $G_1$} (denoted by $G_0 \oplus G_1$) so that
		\[V(G_0 \oplus G_1)=\bigcup_{i \leq 1}\{\langle i,v\rangle: v \in V(G_i)\} \text{ and }E(G_0 \oplus G_1)=\bigcup_{i \leq 1}\{(\langle i,v\rangle,\langle i,w\rangle) : (v,w) \in E(G_i) \}.\]
		Notice that, in case at least one between $G_0$ and $G_1$ has domain $\nats$, without loss of generality we can assume $G_0 \oplus G_1$ to have domain isomorphically mapped to $\nats$.

		\subsection{The learning paradigm}
		\label{sec:our_learning_paradigm}
		We now formally introduce the learning paradigm we are working with, and we define $\ex$-learnability. It is important to notice that in this framework, it is not specified how a family is enumerated; instead, we assume that any structure $\A$ is associated with a corresponding conjecture $\code{\A}$ that can be considered as a natural number. This conjecture essentially conveys the information \lq\lq this is $\A$\rq\rq. 
		From now on, when we write \lq\lq family of structures\rq\rq\ we are assuming the family to be countable and containing pairwise nonisomorphic countable structures.

		\begin{definition}
			\thlabel{definition:paradigm}
			Let $\K$ be a family of structures.
			\begin{itemize}
				\item The \emph{learning domain} ($\mathrm{LD}$)  is the collection of all isomorphic copies of the structures from $\K$. That is,
				$
				\ldofk\defas\bigcup_{\A \in \K} \{\esse : \esse\cong \A\}.
				$ Since any structure is represented with an element of $\Cantor$ (see Section \ref{prel:structures}), $\ldofk$ can be regarded as a subset of $\Cantor$.
				\item The \emph{hypothesis space} ($\mathrm{HS}$) contains, for each $\A\in \K$, a formal symbol $\code{\A}$  and a question mark symbol. That is, 
				$
				\mathrm{HS}(\K)\defas\{\code{\A} : \A \in  \K \}\cup \{?\}.
				$
				\item A \emph{learner} $\learnerM$ sees, by stages, finite portions of the atomic diagram of a given structure in the learning domain and is required to output conjectures. Thus, we mostly consider $\learnerM$ as a mapping from $\{\esse\restriction_s : \esse \in \ldofk\}$ to $\mathrm{HS}(\K)$. In some cases, since any structure is represented by an element of $\Cantor$ (see Section \ref{prel:structures}), we can formalize $\learnerM$ as a function from $2^{<\nats}$ to $\mathrm{HS}(\K)$.  
			\end{itemize}
		\end{definition}
		
		\begin{definition}
			Given a family of structures $\K$, we say that $\K$ is $\ex$-learnable if,  there exists a learner $\learnerM$ such that, on every $\esse\in\ldofk$, $\learnerM$  eventually stabilizes to a correct conjecture about its isomorphism type. That is, for every $\esse \in \ldofk$,
			$
			\lim_{n\to \infty} \learnerM(\esse\restriction_n)=\ulcorner \mathcal{A}\urcorner \text{ if and only if } \esse \cong \A.
			$

		\end{definition}
		
		As already said, explanatory learning is one of the most studied convergence criteria for a learner in classical algorithmic learning theory. Another important convergence criterion is \emph{behaviorally correct learning}, denoted by $\bc$. In the classical algorithmic learning theory, a learner $\learnerM$ learns an input function in behaviorally correct sense, if it produces indices for correct programs for the function (not necessarily  stabilizing on any one of them) for all but finitely many stages.
		In other words, $\bc$-learnability requires to converge \emph{semantically} while $\ex$-learnability asks to converge \emph{syntactically}. 
		To define $\bc$-learnability in our framework we need to adapt the hypothesis space of the paradigm in \thref{definition:paradigm}: that is, for a family $\K$, instead of considering $\mathsf{HS}(\K) \defas \{\code{\A}: \A \in \K\}$, we consider a new hypothesis space defined as $\{\code{\A, i}: i \in \nats \land \A \in \K\}$. Clearly, $\ex$-learnability implies $\bc$-learnability and, in the classical setting, it is a well-known result that $\bc$-learnability is more general than $\ex$-learnability (see \cite{CASE1983193}). On the other hand, in our framework, since our learners have no computational constraints, the two notions actually coincide. Indeed, the $\ex$-learner can immediately recognize whether two conjectures given by the $\bc$-learner refer to the same structure or not and consequently output the corresponding (unique) conjecture associated with the structure.

		Before introducing other learning paradigms, we give an example of a family that is $\ex$-learnable.
		Recall that $\omega$ and $\omega^*$ denote respectively the linear order isomorphic to the natural numbers and the linear order isomorphic to the negative integers. 
		\begin{proposition}
			\thlabel{proposition:omega_vs_omega*}
			$\{\omega,\omega^*\}$ is $\ex$-learnable.
		\end{proposition}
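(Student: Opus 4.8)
The plan is to exhibit an explicit learner $\learnerM$ for the family $\{\omega,\omega^*\}$ and verify that it $\ex$-learns it. The key observation is that although $\omega$ and $\omega^*$ are not finitely distinguishable — every finite substructure of a copy of $\omega$ is also realizable inside a copy of $\omega^*$ and vice versa — they differ in a $\sigmainf{2}$ way: a copy of $\omega$ has a least element, while a copy of $\omega^*$ does not (equivalently, $\omega$ is not a copy of $\omega^*$ because $\omega$ has a $\leq$-least point). So the learner should, at each stage $s$, look at the finite substructure $\esse\restriction_s$ presented so far, locate the current $\leq$-minimum element $m_s$ among $\{0,\dots,s\}$, and conjecture $\code{\omega}$ if $m_s$ has stayed the same as at the previous stage for "long enough", and conjecture $\code{\omega^*}$ otherwise. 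A cleaner formulation: output $\code{\omega}$ at stage $s$ iff the element $0$-th in the presented order (the $\leq_\esse$-least of $\{0,\dots,s\}$) equals the $\leq_\esse$-least of $\{0,\dots,s-1\}$ and this has been stable since some earlier stage; more simply, output $\code{\omega}$ iff the $\leq_\esse$-least element of $\{0,\dots,s\}$ is the same as the $\leq_\esse$-least element of $\{0,\dots,s'\}$ for all $s' \leq s$ with $s'$ at least the stage where that element appeared.

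Concretely I would define: let $a_s$ be the $\leq_\esse$-least element of the domain $\{0,1,\dots,s\}$ of $\esse\restriction_s$. Set $\learnerM(\esse\restriction_s) = \code{\omega}$ if $a_s = a_{s-1}$ (with $a_{-1}$ undefined, so $\learnerM(\esse\restriction_0)=\code{\omega}$ say), and $\learnerM(\esse\restriction_s) = \code{\omega^*}$ if $a_s \neq a_{s-1}$. Then verify the two cases. If $\esse \cong \omega$, let $b$ be the actual least element of $\esse$; there is a stage $s_0$ with $b \leq s_0$, and for all $s \geq s_0$ we have $a_s = b$, hence $a_s = a_{s-1}$ for all $s > s_0$, so $\learnerM(\esse\restriction_s) = \code{\omega}$ for all $s > s_0$, i.e.\ the learner stabilizes to the correct conjecture. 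If $\esse \cong \omega^*$, then $\esse$ has no least element, so for every element $c$ of $\esse$ there is a smaller one appearing at some later stage; thus the sequence $(a_s)_s$ changes infinitely often, meaning $a_s \neq a_{s-1}$ for infinitely many $s$, so $\learnerM$ outputs $\code{\omega^*}$ infinitely often. To get genuine stabilization rather than infinitely-often-correct I should instead use a "sticky" rule: once the learner has seen the $\leq_\esse$-least element stay fixed, keep $\code{\omega}$ unless it changes; but the standard trick is to output $\code{\omega^*}$ unless the currently-least element has remained least at every stage since it first appeared — equivalently output $\code{\omega}$ iff $a_s = a_t$ for every $t$ with $a_s \leq t \leq s$. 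With this rule, in the $\omega$ case the conjecture is eventually constantly $\code{\omega}$, and in the $\omega^*$ case the conjecture is eventually constantly $\code{\omega^*}$, since after the point where $a_s$ has changed past any fixed candidate, that candidate can never again witness stability.

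There is no real obstacle here; the statement is a warm-up example. The only point requiring a little care is making the learner \emph{stabilize} (as $\ex$-learning demands) rather than merely be correct infinitely often, which is handled by the "sticky" reformulation above — once the learner abandons $\code{\omega}$ because a new smaller element appeared, in the $\omega^*$ case it never returns to $\code{\omega}$ because the minimum keeps decreasing, and in the $\omega$ case the minimum is eventually found and never decreases again. Thus $\learnerM$ $\ex$-learns $\{\omega,\omega^*\}$, and this also illustrates the $\sigmainf{2}$ characterization mentioned in the introduction: the formula "there exists a $\leq$-least element" is $\sigmainf{2}$ and separates $\omega$ from $\omega^*$, while its negation separates $\omega^*$ from $\omega$.
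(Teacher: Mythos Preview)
Your ``sticky'' rule has a genuine bug: it \emph{always} outputs $\code{\omega}$. Recall that $a_s$ is the $\leq_\esse$-least element of $\{0,\dots,s\}$, and since the domain of $\esse\restriction_t$ is $\{0,\dots,t\}$, the element $a_s$ first appears at stage $a_s$. Now for any $t$ with $a_s \leq t \leq s$, the element $a_s$ is already in the domain of $\esse\restriction_t$, so $a_t \leq_\esse a_s$; but domains only grow, so the minimum at stage $s$ is $\leq_\esse$ the minimum at stage $t$, giving $a_s \leq_\esse a_t$. Hence $a_t = a_s$ for every such $t$, and your condition is vacuously satisfied at every stage. Concretely, on the copy of $\omega^*$ in which element $n$ is placed at position $-n-1$ (so each new element is a new minimum), your learner outputs $\code{\omega}$ forever. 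Your informal justification (``once the learner abandons $\code{\omega}$ it never returns'') fails because the learner never abandons $\code{\omega}$: when a new smaller element appears, that new element itself trivially witnesses the stability condition.

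The underlying issue is that tracking only the minimum cannot work: both in $\omega$ and in $\omega^*$, whatever is currently the minimum has been the minimum ever since it appeared. The paper's proof fixes this by exploiting the symmetry: it tracks both $\min_s$ and $\max_s$, counts how long each has been stable via $c_{\min}(s)$ and $c_{\max}(s)$, and outputs $\code{\omega}$ or $\code{\omega^*}$ according to which counter is larger. In a copy of $\omega$ the true minimum eventually appears and $c_{\min}$ grows without bound while $c_{\max}$ keeps resetting, so the learner stabilizes to $\code{\omega}$; symmetrically for $\omega^*$. You need some such comparison of two $\sigmainf{2}$ conditions to get genuine stabilization.
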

		\begin{proof}
			Given $\esse \in \ldofk$, we define $min_s\defas \min_{\leq_{\esse}}\{n: n \in \esse\restriction_s\}$ and $max_s\defas \max_{\leq_{\esse}}\{n: n \in \esse\restriction_s\}$, and we let $c_{min}(s)\defas |\{t \leq s : min_s=min_{t}\}|$ and $c_{max}(s)\defas |\{t \leq s : max_s=max_{t}\}|$. We define a learner $\learnerM$ as follows:
			\[\learnerM(\esse\restriction_s) \defas 
			\begin{cases}
				\code{\omega} & \text{if } c_{min}(s)> c_{max}(s),\\
				\code{\omega^*} & \text{if } c_{min}(s)< c_{max}(s),\\
				? & \text{otherwise}.
			\end{cases}
			\]
			Suppose that $\esse \cong \omega$ (the case for $\esse \cong \omega^*$ is analogous): then there will be a stage $s$ such that $n \in \esse\restriction_s$ and $n$ is such that $n = \min_{\leq_{\mathcal{S}}}\{ m : m \in \esse\}$. Hence, there exists a stage $s' \geq s$ such that for all $t \geq s'$, $c_{min}(t)> c_{max}(t)$. By the first case of $\learnerM$'s definition, this means that for all $t \geq s'$, $\learnerM(\esse\restriction_t)= \code{\omega}$, i.e., $\K$ is $\ex$-learnable.
		\end{proof}
		
		In \cite{bazcipsan_cie}, the authors explored $\alpha$-\emph{learnability}, that can be introduced as a restriction of $\ex$-learnability. To define it, we first need the following definition.
		\begin{definition}
			Let $\learnerM$ be a learner, $\K$ be a family of structures, and let $\function{c}{\{\esse\restriction_s : \esse \in \ldofk\}}{ \mathrm{Ordinals}}$. We say that  $c$ is a \emph{mind change counter for} $\learnerM$  if, for any $s \in \nats$,
			
			\begin{itemize}
				\item $c(\esse\restriction_{s+1}) \leq c(\esse\restriction_s)$, and
				
				\item $c(\esse\restriction_{s+1})<c(\esse\restriction_s)$ if and only if $\learnerM$ changes its mind at  $\esse\restriction_{s+1}$.
			\end{itemize}
			We say that $\K$ is $\alpha$-learnable if there exists a learner $\learnerM$ that $\ex$-learns $\K$ and, for every $\esse\in\ldofk$, stabilizes to the correct conjecture making at most $\alpha$-many mind changes. That is there is a mind change counter $c$ for $\learnerM$ and $\K$  such that $c(\str{})=\alpha$. We say that $\K$ is \emph{properly $\alpha$-learnable} if $\K$ is $\alpha$-learnable but not $\beta$-learnable for all $\beta<\alpha$.
		\end{definition}
		Notice that, with our definition, any learner has an associated counter. A priori, one could have defined different counters that are in a certain sense not \lq\lq optimal\rq\rq\ with respect to $\learnerM$’s mind changes, e.g., counters that decrease even if $\learnerM$ did not change its mind at $\sigma$. 
		
		As we have mentioned in the introduction $0$-learnability (i.e., learning with no mind-changes) corresponds to what in classical algorithmic learning theory is called $\Fin$-learnability.   
		
		To assess the power of various learning criteria and of the equivalence relations introduced in the next section, it will be fundamental to analyze the logical complexity of the formulas needed to separate structures in the family to be learned. Such complexity will be measured with respect to the infinitary logic $\mathcal{L}_{\omega_1\omega}$ which allows to take countable conjunctions and disjunctions. 
		
		\subsection{Infinitary logic}
		\label{sec:infinitary}
		The following definitions (and much more) can be found e.g., in \cite{montalban2005beyond}.
		\label{sec:infinitarylogic}
		Given a language $L$, $\Lomega$ is defined as the smallest class such that:
		\begin{itemize}
			\item all finitary quantifier-free $L$-formulas are in $\Lomega$;
			\item if $\varphi$ is already in $\Lomega$, then so are all formulas $\forall x \varphi$, $\exists x \varphi$;
			\item if $\overline{x}$ is a finite tuple of variables and $S \subseteq \Lomega$ is a countable set of formulas whose free variables are contained in $\overline{x}$ then both 
			\begin{itemize}
				\item the infinitary disjunction of the formulas in $S$ denoted by $\bigdoublevee
				_{\varphi \in S} \varphi$,
				\item the infinitary conjunction of the formulas in $S$ denoted by $\bigdoublewedge
				_{\varphi \in S} \varphi$,
			\end{itemize}
			are in $\Lomega$.
		\end{itemize}
		
		The complexity of the formulas is defined similarly to first-order logic counting the alternation of quantifiers: here, infinitary disjunctions and conjunctions are treated respectively as existential and universal quantifiers. 
		\begin{definition}
			Fix a countable language $L$. For every $\alpha<\omega_1$ we define the sets 
			$\sigmainf{\alpha}$ and $\piinf{\alpha}$ of $L$-formulas inductively.\\
			For $\alpha=0$ the $\sigmainf{\alpha}$ and $\piinf{\alpha}$ formulas are the quantifier-free first-order $L$-formulas.
			\begin{itemize}
				\item A $\sigmainf{\alpha}$ formula $\varphi(\overline{x})$ is the countable disjunction
				\[ \bigdoublevee_{i \in I} (\exists \overline{y}_i)( \psi_i(\overline{x},\overline{y}_i))\]
				where $I$ is a countable set and each $\psi_i$ is a $\piinf{\beta_i}$ formula for $\beta_i < \alpha$;
				\item A $\piinf{\alpha}$ formula $\varphi(\overline{x})$ is the countable conjunction
				\[ \bigdoublewedge_{i \in I} (\forall \overline{y}_i)( \psi_i(\overline{x},\overline{y}_i))\]
				where $I$ is a countable set and each $\psi_i$ is a $\sigmainf{\beta_i}$ formula for $\beta_i < \alpha$.
			\end{itemize}
		\end{definition}

		Given a structure $\A$ we define the $\sigmainf{n}$\emph{-theory} of $\A$ as $\thsigma{n}{\A}\defas\{\varphi \in \Lomega: \A \models \varphi \text{, and $\varphi$ is a closed } \sigmainf{n}\text{ formula}\}$.

		\begin{definition} 
			\thlabel{definition:sigmainfnposet}
			Let $\K=\{\A_i : i \in \nats\}$ be a family of countable structures. Then, 
			\begin{itemize}
				\item $\K$ is a \emph{$\sigmainf{\alpha}$\text{-}antichain}, if the partial order
				$
				\big(\{ \thsigma{\alpha}{\A_i} : i\in\nats \},\subseteq\big)\text{ is an antichain}
				$, i.e., for any $i \neq j$ there exists two $\sigmainf{\alpha}$ formulas $\varphi_i,\varphi_j$ such that $\A_i \models \varphi_i \land \A_i \not\models \varphi_j$ and $\A_j \models \varphi_j \land \A_j \not\models \varphi_i$;
				\item $\K$ is a \emph{$\sigmainf{\alpha}$\text{-}strong antichain} if there are $\sigmainf{\alpha}$ formulas $\{\varphi_i : i \in \nats \}$ so that
				$
				\A_i\models \varphi_j \Leftrightarrow i=j.
				$
				\item $\K$ is a \emph{$\sigmainf{\alpha}$\text{-}partial order} if, for all $i\neq j\in\nats$, 
				$
				\thsigma{\alpha}{\A_i} \neq  \thsigma{\alpha}{\A_j};
				$
			\end{itemize} 
		\end{definition}
		The following Lemma (without proof) states the obvious relations between the three definitions above.
		\begin{lemma}
			\thlabel{lemma:easyposet}
			The following hold:
			\begin{itemize}
				\item[(a)]  If $\K$ is a $\sigmainf{\alpha}$-strong antichain then $\K$ is also a $\sigmainf{\alpha}$-antichain.
				\item[(b)]  If $\K$ is a $\sigmainf{\alpha}$-antichain then $\K$ is also a $\sigmainf{\alpha}$-partial order.
				\item[(c)] if $\K$ is finite, then $\K$ is a $\sigmainf{\alpha}$-strong antichain if and only if $\K$ is a $\sigmainf{\alpha}$-antichain.
			\end{itemize}
		\end{lemma}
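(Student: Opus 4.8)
The plan is to prove all three items by directly unwinding \thref{definition:sigmainfnposet}; each one is essentially immediate, and the only place where anything that is not pure bookkeeping occurs is a closure property of $\sigmainf{\alpha}$ needed for the nontrivial half of (c).

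For (a), I would take $\sigmainf{\alpha}$ formulas $\{\varphi_i : i \in \nats\}$ witnessing that $\K=\{\A_i : i\in\nats\}$ is a $\sigmainf{\alpha}$-strong antichain, so that $\A_i \models \varphi_j \Leftrightarrow i=j$; then for any fixed $i\neq j$ the conditions $\A_i\models\varphi_i$, $\A_i\not\models\varphi_j$, $\A_j\models\varphi_j$, $\A_j\not\models\varphi_i$ all hold by definition, which is exactly the antichain requirement for the pair $\{i,j\}$, witnessed by the same two formulas. For (b), given a $\sigmainf{\alpha}$-antichain and $i\neq j$, the witnessing formula that is true in $\A_i$ and false in $\A_j$ belongs to $\thsigma{\alpha}{\A_i}\setminus\thsigma{\alpha}{\A_j}$, so these two theories differ and $\K$ is a $\sigmainf{\alpha}$-partial order.

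For (c), one direction is just the finite instance of (a). For the converse, suppose $\K=\{\A_0,\dots,\A_{n-1}\}$ is a finite $\sigmainf{\alpha}$-antichain. For each ordered pair $(i,j)$ with $i\neq j$, the antichain condition provides a $\sigmainf{\alpha}$ sentence $\varphi_{i,j}$ with $\A_i\models\varphi_{i,j}$ and $\A_j\not\models\varphi_{i,j}$; I would then set $\psi_i\defas\bigwedge_{j\neq i}\varphi_{i,j}$ and check that $\A_k\models\psi_i \Leftrightarrow k=i$, which is clear since $\A_i$ satisfies every conjunct while for $k\neq i$ the conjunct $\varphi_{i,k}$ fails in $\A_k$. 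The one step that is not completely vacuous --- and the only candidate for an obstacle --- is verifying that each $\psi_i$, being a \emph{finite} conjunction of $\sigmainf{\alpha}$ sentences, is again logically equivalent to a $\sigmainf{\alpha}$ sentence. I would handle this by pushing the (finitely many) countable disjunctions out through the finite conjunction, renaming bound variables apart, which rewrites $\psi_i$ as a countable disjunction of formulas $\exists\bar y\,\theta$ where $\theta$ is a finite conjunction of $\piinf{\beta}$ formulas with each $\beta<\alpha$; and such a finite conjunction is itself $\piinf{\beta'}$ for some $\beta'<\alpha$ (simply merge the outer countable conjunctions). I do not expect any genuine difficulty: the lemma is a straightforward translation between the three conditions, and this closure remark is the sole spot where the syntactic shape of infinitary formulas has to be invoked.
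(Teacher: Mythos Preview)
Your proposal is correct and is essentially the obvious argument the paper has in mind; note that the paper explicitly states this lemma \emph{without proof}, so there is nothing to compare against beyond confirming that your unwinding of \thref{definition:sigmainfnposet} and the closure of $\sigmainf{\alpha}$ under finite conjunctions is sound, which it is.
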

		
		With this definition in mind, we can reformulate  \cite[Theorem 3.1]{bazhenov2020learning} mentioned in the introduction.
		\begin{theorem}
			A family $\K$ is $\ex$-learnable if and only if $\K$ is $\sigmainf{2}$-strong antichain.
		\end{theorem}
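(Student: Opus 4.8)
This restates \cite[Theorem 3.1]{bazhenov2020learning} via \thref{definition:sigmainfnposet}, and the plan is to reprove both directions.

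\emph{Sufficiency.} Suppose $\{\varphi_i:i\in\nats\}$ are $\sigmainf{2}$ sentences with $\A_i\models\varphi_j\iff i=j$, and put them in normal form $\varphi_i=\bigdoublevee_k\exists\bar y\,\chi_{i,k}(\bar y)$ with each $\chi_{i,k}=\bigdoublewedge_\ell\forall\bar z\,\theta_{i,k,\ell}$ a $\piinf{1}$ formula and each $\theta_{i,k,\ell}$ quantifier-free. Fix an $\omega$-enumeration of all triples $(i,k,\bar a)$ where $\bar a$ is a finite tuple from $\nats$. On a copy $\esse$, call the $e$-th triple $(i,k,\bar a)$ \emph{active at stage $s$} if $e\le s$, $\bar a\subseteq\{0,\dots,s\}$, and no instance $\theta_{i,k,\ell}(\bar a,\bar b)$ with $\ell\le s$ and $\bar b\subseteq\{0,\dots,s\}$ is false in $\esse\restriction_s$; note that a triple whose matrix $\chi_{i,k}(\bar a)$ holds in $\esse$ is active at all sufficiently large stages, while a triple whose matrix fails is active at no stage past some finite point. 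The learner $\learnerM$ outputs $\code{\A_i}$ for the $i$ belonging to the least-$e$ triple active at stage $s$, and $?$ if there is none. If $\esse\cong\A_j$, then $\esse\models\varphi_j$, so some triple for $\varphi_j$ is eventually permanently active, while every lower-$e$ triple either also names $j$ or names an $i$ with $\esse\not\models\varphi_i$ and hence dies; so $\learnerM$ stabilizes to $\code{\A_j}$, as required.

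\emph{Necessity.} Let $\learnerM$ $\ex$-learn $\K=\{\A_i:i\in\nats\}$. Identify a finite repetition-free sequence $\sigma$ of elements of a structure $\B$ with the finite structure it induces (the $j$-th entry renamed $j$), so $\learnerM(\sigma)$ is defined, and recall that copies of $\B$ correspond to repetition-free enumerations of its domain. The plan is to make $\varphi_i$ express ``$\B$ realizes a locking sequence for $\learnerM$ aimed at $\code{\A_i}$'': take
\[
\varphi_i\ :=\ \bigdoublevee_{n,\ D:\,\learnerM(D)=\code{\A_i}}\ \exists a_0\cdots\exists a_n\,\Big[(\bar a\text{ repetition-free})\wedge(D\text{ is the structure on }\bar a)\wedge\bigdoublewedge_m\forall b_1\cdots\forall b_m\,\psi_{D,m}(\bar a,\bar b)\Big],
\]
where $D$ ranges over finite structures on domain $\{0,\dots,n\}$ and $\psi_{D,m}(\bar a,\bar b)$ is the quantifier-free formula saying ``$b_1,\dots,b_m$ are not all distinct from each other and from $\bar a$, or else the structure on $\bar a{}^\frown\bar b$ is one of the finitely many $D'\supseteq D$ with $\learnerM(D')=\code{\A_i}$''. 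Each bracketed matrix is a quantifier-free formula conjoined with a $\piinf{1}$ formula, hence $\piinf{1}$, so $\varphi_i\in\sigmainf{2}$. If $\A_k\models\varphi_i$, then reading off the witnesses and then enumerating the remaining domain elements after them gives a copy of $\A_k$ on which $\learnerM$ outputs $\code{\A_i}$ from some stage on, forcing $\code{\A_i}=\code{\A_k}$, i.e.\ $k=i$; conversely $\A_i\models\varphi_i$ since an $\ex$-learner of $\A_i$ admits a locking sequence for it.

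\emph{Main obstacle.} The only genuinely nontrivial ingredient is the locking-sequence lemma invoked for $\A_i\models\varphi_i$. If there were no finite $\sigma$ with $\learnerM(\sigma)=\code{\A_i}$ all of whose repetition-free extensions inside $\A_i$ keep $\learnerM$ at $\code{\A_i}$, one could build a copy of $\A_i$ by alternately extending a $\code{\A_i}$-segment to a non-$\code{\A_i}$-segment (using the failure of the locking property) and then, using that $\learnerM$ converges to $\code{\A_i}$ on \emph{every} copy of $\A_i$, extending back to a $\code{\A_i}$-segment while also enumerating the next unused domain element; the resulting copy would make $\learnerM$ change its mind infinitely often, contradicting $\ex$-learnability. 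The remaining work — the normal-form rewriting in the sufficiency direction, and verifying that the displayed $\varphi_i$ is $\sigmainf{2}$ and isolates the isomorphism type of $\A_i$ within $\K$ — is routine.
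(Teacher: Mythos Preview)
The paper itself does not prove this theorem; it is stated as a reformulation of \cite[Theorem 3.1]{bazhenov2020learning} and no argument is given beyond the citation. So there is nothing to compare against within the present paper.

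That said, your sketch is correct and is essentially the standard proof one finds in the cited source. For sufficiency, your ``active triple'' bookkeeping is exactly the usual way of approximating a $\piinf{1}$ matrix from below: a true instance is cofinally active, a false one dies, and taking the least-index active triple gives the limit conjecture. For necessity, your $\varphi_i$ is the locking-sequence sentence; the verification that $\A_k\models\varphi_i\Rightarrow k=i$ and the back-and-forth construction showing a locking sequence must exist are both right. Two small points worth making explicit if you expand the sketch: (1) the outer disjunction in $\varphi_i$ is countable because the signature is finite relational, so there are only finitely many $D$ on each $\{0,\dots,n\}$; (2) invoking $\learnerM$ on an arbitrary finite structure $D$ is legitimate because the paper explicitly allows the learner to be taken as a total map from $2^{<\nats}$ to $\mathrm{HS}(\K)$.
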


		\subsection{$E$-learnability}
		\label{sec:elearnability}
		As already mentioned in the introduction, the definition of $E$-learnability borrows ideas from descriptive set theory. 
		One of the main themes of this subject is the study of the complexity of equivalence relations and, a popoular way to evaluate this complexity is via \emph{reductions}.
		
		In general, a {reduction} from an equivalence relation $E$ on a space $X$
		to an equivalence relation $F$ on $Y$ is a function $\Gamma : X \rightarrow Y$ such that $x E x'$
		if and only if $\Gamma(x) F \Gamma(x')$. In this paper, we will always use \emph{continuous} reductions, i.e., we assume that the reduction $\Gamma$ is a continuous function. This is indeed a natural choice, as we want $\Gamma$ to
		mimic the behavior of a learner. Therefore, as the learner outputs a conjecture based on
		an initial segment of the atomic diagram of the given structure, we expect a finite portion
		of the output of $\Gamma$ on a given element $x$ to be determined by an initial segment of $x$. We
		make this observation formal. To do so we define what is a \emph{Turing operator}. Consider a partial computable function $\Phi$ with oracle $X$ mapping finite sequences to finite sequences such that $\sigma \sqsubseteq \tau$ implies $\Phi^X(\sigma) \sqsubseteq \Phi^X(\tau)$. Then, $\Phi$ is considered as a (partial) Turing operator with oracle $X$, mapping $\Baire$ to $\Baire$ by mapping infinite sequence $p$ to $\bigcup_s \Phi^X(p[s])$, if this is infinite.
		
		During the paper, we will often use the following well-known fact (we
		refer the reader to \cite[Lemmma 2.1]{bazhenov2021learning} for a short proof of this).
		
		\begin{lemma}[Folklore]
			\thlabel{lemma:folklore}
			If $\Gamma:\Baire \rightarrow \Baire$ is continuous, then there is an oracle $X$ and a Turing operator $\Phi$ such that $\Gamma=\Phi^{X}$.
		\end{lemma}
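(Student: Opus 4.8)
The plan is to extract from the continuity of $\Gamma$ a monotone map on finite strings whose graph, once coded as an oracle, drives a Turing operator that recovers $\Gamma$. First I would record the standard consequence of continuity: for every $p \in \Baire$ and every $n \in \nats$ there is an $m \in \nats$ such that every $q$ extending $p[m]$ satisfies $\Gamma(q)[n] = \Gamma(p)[n]$. In other words, each output value $\Gamma(p)(n)$ is already fixed by a finite initial segment of $p$; this ``modulus of continuity'' is precisely what lets us reconstruct $\Gamma$ from finite approximations.

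Next I would define the map $\function{f}{\baire}{\baire}$ that the operator will compute. For $\sigma \in \baire$ let $[\sigma] \subseteq \Baire$ denote the set of all infinite extensions of $\sigma$, and let $f(\sigma)$ be the longest string $\tau$ with $|\tau| \leq |\sigma|$ such that $\tau \sqsubseteq \Gamma(q)$ for every $q \in [\sigma]$; that is, $f(\sigma)$ is the common initial segment that $\Gamma$ forces on all extensions of $\sigma$, truncated at length $|\sigma|$ to keep it finite. The truncation guarantees $f$ takes values in $\baire$, and $f$ is monotone: if $\sigma \sqsubseteq \sigma'$ then $[\sigma'] \subseteq [\sigma]$ forces a common prefix at least as long, while the looser cap $|\sigma'| \geq |\sigma|$ only helps, so $f(\sigma) \sqsubseteq f(\sigma')$.

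I would then let the oracle $X$ encode the graph of $f$, say $X = \{\str{\code{\sigma},\code{\tau}} : f(\sigma)=\tau\}$ under a fixed coding of finite strings. A machine with oracle $X$ computes $f(\sigma)$ by searching $X$ for the unique code pair beginning with $\code{\sigma}$, so there is a partial computable $\Phi$ with $\Phi^X(\sigma)=f(\sigma)$ for all $\sigma$; since $f$ is monotone, $\Phi$ is a genuine Turing operator in the sense of the excerpt, and its extension to $\Baire$ sends $p$ to $\bigcup_s \Phi^X(p[s]) = \bigcup_s f(p[s])$.

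Finally I would verify $\Phi^X = \Gamma$ on $\Baire$, i.e.\ $\bigcup_s f(p[s]) = \Gamma(p)$ for every $p$. One inclusion is immediate: $p \in [p[s]]$, so $f(p[s]) \sqsubseteq \Gamma(p)$ for every $s$, whence $\bigcup_s f(p[s]) \sqsubseteq \Gamma(p)$. For the reverse, fix $n$ and use the modulus to obtain $m \geq n$ with $\Gamma(q)[n]=\Gamma(p)[n]$ for all $q \in [p[m]]$; then $\Gamma(p)[n]$ is a common prefix of $\Gamma$ on $[p[m]]$ of length $n+1 \leq |p[m]|$, so the cap permits it and $\Gamma(p)[n] \sqsubseteq f(p[m])$. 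Thus every finite prefix of $\Gamma(p)$ appears in some $f(p[s])$, giving $\Gamma(p) \sqsubseteq \bigcup_s f(p[s])$, and since $\Gamma(p)$ is infinite the union is infinite, so $\Phi^X(p)$ is defined and equals $\Gamma(p)$. The one step requiring care is this last one: without continuity the prefixes $f(p[s])$ could stall and fail to exhaust $\Gamma(p)$, so the modulus of continuity is doing all the real work while the remainder is coding bookkeeping.
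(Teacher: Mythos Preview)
Your argument is correct and is the standard proof of this folklore fact. The paper itself does not give a proof but only cites \cite[Lemma~2.1]{bazhenov2021learning}; your construction of the monotone approximation $f$ via the longest common output prefix (capped by input length), encoding its graph as the oracle, and verifying the two inclusions using the modulus of continuity is exactly the expected route, and your care with the indexing convention $|p[m]|=m+1$ is appropriate.
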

		Clearly, the same Lemma holds if, instead of $\Baire$, the domain and/or range of $\Gamma$ and the Turing
		operator $\Phi$ is defined in spaces like $\Cantor$, $\Baire$ and $\nats^{\nats \times \nats}$.

		In the introduction, we have already mentioned that $\ex$-learnability and $E_0$-learnability coincide and that replacing $E_0$ with other equivalence relations of lower/higher complexity unlocks a natural way to stratify learning problems. Now we make this formal with the following two definitions.
		
		\begin{definition}
			\thlabel{definition:elearnability}
			A family of structures $\K$ is $E$-learnable if there is a function $\Gamma: \ldofk \rightarrow \Baire$
			which continuously reduces $\ldofk$ to $E$.
		\end{definition}
		
		\begin{remark}
			\thlabel{remark:elearning}
			\thref{definition:elearnability} differs from \cite[Definition 3.2]{bazhenov2021learning} where $\Gamma$ is a total function from $\Cantor$ to $\Cantor$ continuously reducing $\ldofk$ to $E$. First of all, notice that the all the (non) reductions and characterizations in \cite{bazhenov2021learning} hold the same regardless of which definition we are using. On the other hand, we point out two mistakes in \cite{bazcipsan_cie}: here the authors consider $E$-learnability as defined in \cite[Definition 3.2]{bazhenov2021learning}. The mistakes are in \cite[Proposition 1, Theorem 3]{bazcipsan_cie}: namely these results do not hold with their definition but they do with ours. We provide a counterexample to  \cite[Proposition 1]{bazcipsan_cie}: since \cite[Theorem 3]{bazcipsan_cie} is a generalization of the latter, the same counterexample works for both results. \cite[Proposition 1]{bazcipsan_cie} states that $\mathbf{Fin}$-learnability implies $Id$-learnability. As already mentioned, the claim holds with our definition of $E$-learnability. Let  $\K=\{\mathsf{C}_3 \oplus \mathsf{I},\mathsf{C}_4 \oplus \mathsf{I}\}$. We claim that $\K$ is $\mathbf{Fin}$-learnable but not $\Id$-learnable (where $\Id$-learnability here is defined as in \cite[Definition 3.2]{bazhenov2021learning}). The fact that $\K$ is $\mathbf{Fin}$-learnable will follow from \thref{theorem:sigma1strongantichain}, but it is also immediate just by defining a learner as follows. Given some structure $\esse$ in input, the learner outputs \lq\lq ?\rq\rq\ unless there is a first stage in which either a copy of $\mathsf{C}_3$ or a copy of $\mathsf{C}_4$ appears in $\esse$: if so, the learner outputs $\code{\mathsf{C}_3 \oplus \mathsf{I}}$ or $\code{\mathsf{C}_4 \oplus \mathsf{I}}$ accordingly. In case $\esse \notin \ldofk$, then the learner produces either a constant sequence of \lq\lq ?\rq\rq\ or an eventually constant $\code{\mathsf{C}_i \oplus \mathsf{I}}$ sequence for $i \in \{3,4\}$. To show that $\K$ is not $\Id$-learnable (where $\Id$-learnability is defined as in \cite[Definition 3.2]{bazhenov2021learning}), we proceed as follows. First notice that $\Gamma(\mathsf{C}_3 \oplus \mathsf{I})=p$ and $\Gamma(\mathsf{C}_4 \oplus \mathsf{I})=q$ for some $p, q \in \Cantor$ where $p\ \centernot{\Id}\ q$. Hence we start defining a copy $\esse$ that is made only of isolated vertices. Since $\Gamma$ is total, it needs to be defined also if $\esse \notin \ldofk$. Then it suffices to wait for a stage $s$ such that $\Gamma(\esse)[s] \neq p[s]$ or $\Gamma(\esse)[s] \neq q[s]$. Without loss of generality, assume $\Gamma(\esse)[s] \neq p[s]$: then it suffices to build $\esse$ so that $\esse \cong \mathsf{C}_3 \oplus \mathsf{I}$ and this proves the claim.
			
			On the other hand, the reason for which  \cite[Proposition 1, Theorem 3]{bazcipsan_cie} do not work is \lq\lq not natural\rq\rq\ in the following sense. The notion of $E$-learnability generalizes $\ex$-learning (see e.g., \cite[Theorem 3.1]{bazhenov2020learning}). In $\ex$-learning (and in the other variants like $\mathbf{Fin}$-learning) we never considered the case in which the input could have been a structure outside the learning domain of the target family and so we find more natural that the $\Gamma$ in \thref{definition:elearnability} should be defined only on $\ldofk$ and not in the whole space of structures. 
			
			This remark hopefully motivated that the definition we are using is more natural, and we highlight once again that the notion of $E$-learnability we are using in this paper is the one defined in \thref{definition:elearnability}.
		\end{remark}

		The following notion of reducibility between equivalence relations captures their learning-theoretic strength of $E$-learnability. Notice that it makes sense to consider this notion of reducibility also in the context of learning of finite families. Indeed, there are families consisting of $2$ structures that are not $\ex$-learnable: This is in contrast with classical algorithmic learning theory, where, for example, finite families of computable functions are always $\ex$-learnable. Regarding the computational power needed to learn such families, the authors in \cite{bazhenov2021turing} showed that there exists a family of $2$ structures that is $\ex$-learnable but not by a computable learner.
		\begin{definition}
			Let $E$ and $F$ be equivalence relations: $E$ is \emph{learn-reducible}\ to $F$ (in symbols $E \learnreducible{} F$),
			if every $E$-learnable family of structures is also $F$-learnable. Similarly, $E$ is \emph{finitary learn-reducible}\ to $F$ (in symbols $E \learnreducible{\finitary} F$), if every finite $E$-learnable family is also 
			$F$-learnable. Let also
			\begin{itemize}
				\item $E\ \learnequiv{}\ F$ if and only if $E \learnreducible{} F$ and $F \learnreducible{} E$, and
				\item $E\ \strictlearnincomparable{}\ F$ if and only if $E \notlearnreducible{} F$ and $F \notlearnreducible{} E$.
			\end{itemize}
			Similar notations are used for the finitary learn-reducibility.
		\end{definition}
		Notice that in case we want to compare $E$-learnability for some equivalence relation $E$ with some learning paradigm $\mathbf{X}$ (e.g., like in \cite[Theorem 3.1]{bazhenov2021learning}) we will use the same notation. For example, we write $E_0 \learnreducible{} \ex$ meaning \lq\lq Any $E_0$-learnable family is also $\ex$-learnable\rq\rq.

		We now introduce the equivalence relations we consider in this paper. In the context of learnability for algebraic structures, with the exception of $=_\nats$ and $\erange$, these have already been studied in \cite{bazhenov2021learning}.
		
		\begin{enumerate}[(i)]
			\item For $n,m \in \nats$, $n \ =_\nats \ m$ if and only if $n=m$.
			\setcounter{newcounter}{\value{enumi}}
			
		\end{enumerate}
		For $p,q \in \Baire$, 
		\begin{enumerate}[(i)]
			\setcounter{enumi}{\value{newcounter}}
			
			\item   $p \ \Id \ q$ if and only if $(\forall n)(p (n)=q(n)).
			$
			\item
			$p \ E_0 \ q $ if and only if $(\exists m)(\forall n\geq m)(p (n)=q(n)).
			$
			\item  $p\ \erange\ q$ if and only if 
			$
			\{ p(m)\,\colon m\in\nats \} = \{ q(m)\,\colon m\in\nats\}.
			$
			\setcounter{newcounter}{\value{enumi}}
		\end{enumerate}
		For $p,q \in \nats^{\nats \times \nats}$
		\begin{enumerate}[(i)]
			\setcounter{enumi}{\value{newcounter}}
			\item $p \ E_3\ q$ if and only if 
			$
			(\forall m )( p^{[m]} \ E_0 \ q^{[m]}).
			$
			\item $p\ \eset\ q$  if and only if $\{ p^{[m]} : m \in \nats \} = \{q^{[m]} : m \in \nats\}$.
			
		\end{enumerate}
		
		Notice that the equivalence relations studied in \cite{bazhenov2021learning} were defined on $\Cantor$ and $2^{\nats \times \nats}$ rather than $\Baire$ and $\nats^{\nats\times \nats}$, but the next Lemma shows that the two versions coincide. For one of the equivalence relations $E$ in \textbf{(ii)}-\textbf{(vi)} (except \textbf{(iv)}), let us denote by $E(\Cantor)$ the corresponding equivalence relations with domain $\Cantor$ or $2^{\nats \times \nats}$ considered in \cite{bazhenov2021learning}.
		
		\begin{lemma}
			For any equivalence realtion $E$ in $\mathbf{(ii)}-\mathbf{(vi)}$ (except $\mathbf{(iv)}$), $E \learnequiv{} E(\Cantor)$.
		\end{lemma}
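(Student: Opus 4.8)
The plan is to prove the stronger claim that each such $E$ is continuously bireducible with $E(\Cantor)$; this suffices, since whenever $h$ is a continuous reduction of an equivalence relation $R$ to an equivalence relation $S$ and $\Gamma$ witnesses $R$-learnability of a family $\K$, the composite $h\circ\Gamma$ is continuous and continuously reduces $\ldofk$ to $S$, so $\K$ is $S$-learnable; taking $(R,S)=(E,E(\Cantor))$ and then $(R,S)=(E(\Cantor),E)$ gives $E\ \learnequiv{}\ E(\Cantor)$. One of the two required reductions is for free: for each of $\Id$, $E_0$, $E_3$, $\eset$ the defining condition (sequence equality; eventual agreement; columnwise eventual agreement; equality of the set of columns) is phrased identically whether the sequence entries range over $\{0,1\}$ or over $\nats$, so $E(\Cantor)$ is literally the restriction of $E$ to $\Cantor$ (respectively to $2^{\nats \times \nats}$), and the inclusion map is a continuous reduction of $E(\Cantor)$ to $E$.

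For the reduction of $E$ to $E(\Cantor)$ the main gadget is the map $f : \Baire \to \Cantor$ given, via the pairing $\str{\cdot,\cdot}$, by $f(p)(\str{n,k})=1$ if and only if $p(n)=k$; thus $f(p)$ is the binary matrix whose $n$-th column is the indicator of the value $p(n)$. Each output bit of $f(p)$ depends on a single entry of $p$, so $f$ is continuous; and $f$ is injective with $f(p)=f(q)$ if and only if $p=q$, so $f$ reduces $\Id$ to $\Id(\Cantor)$. For $E_0$, observe that $f(p)$ and $f(q)$ differ at $\str{n,k}$ exactly when $p(n)\neq q(n)$ and $k\in\{p(n),q(n)\}$; hence for each $n$ the set of positions in the $n$-th column at which $f(p)$ and $f(q)$ differ is nonempty precisely when $p(n)\neq q(n)$, and these sets are pairwise disjoint for distinct $n$. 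Therefore $f(p)$ and $f(q)$ differ in only finitely many positions if and only if $p(n)=q(n)$ for all but finitely many $n$, i.e.\ $f$ reduces $E_0$ to $E_0(\Cantor)$ as well. The naive code $p\mapsto 0^{p(0)}10^{p(1)}1\cdots$ does \emph{not} reduce $E_0$ --- changing a finite prefix of $p$ shifts the whole tail of the code and can destroy eventual agreement --- and keeping the $n$-th block in a fixed position region is exactly what repairs this; verifying that the matrix code preserves and reflects eventual agreement is the one genuinely substantive point.

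For $E_3$ and $\eset$ one lifts $f$ columnwise: for $p\in\nats^{\nats\times\nats}$ let $g(p)\in 2^{\nats \times \nats}$ be the element whose $m$-th column is $f(p^{[m]})$. Then $g$ is continuous and injective and $g(p)^{[m]}=f(p^{[m]})$. Since $f$ reduces $E_0$ to $E_0(\Cantor)$, $p\,E_3\,q\Leftrightarrow(\forall m)\big(f(p^{[m]})\,E_0\,f(q^{[m]})\big)\Leftrightarrow g(p)\,E_3(\Cantor)\,g(q)$; and since $f$ is injective, $\{g(p)^{[m]}:m\in\nats\}=\{g(q)^{[m]}:m\in\nats\}$ if and only if $\{p^{[m]}:m\in\nats\}=\{q^{[m]}:m\in\nats\}$, so $g$ reduces $\eset$ to $\eset(\Cantor)$.

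Once the $E_0$ case is settled, $\Id$ is immediate and $E_3$, $\eset$ are bookkeeping, so this completes the plan. I finally remark that the argument genuinely fails for $\erange$ of item (iv), consistently with its exclusion: $\erange$ restricted to $\Cantor$ has only three classes, namely the ranges $\{0\}$, $\{1\}$, and $\{0,1\}$, whereas $\erange$ on $\Baire$ has continuum many, so $\erange$ and $\erange(\Cantor)$ are not learn-equivalent.
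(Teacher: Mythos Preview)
Your proof is correct and follows the same overall strategy as the paper: establish continuous bireducibility between $E$ and $E(\Cantor)$, handling the ``matrix'' relations $E_3$ and $\eset$ by lifting the one-dimensional reductions columnwise. The difference lies only in the choice of encoding. The paper uses two separate gadgets---the unary code $p\mapsto 0^{p(0)}10^{p(1)}1\cdots$ for $\Id$, and a reduction cited from Gao's textbook for $E_0$---whereas you use a single ``characteristic matrix'' map $f(p)(\str{n,k})=1\iff p(n)=k$ that simultaneously handles $\Id$ and $E_0$. Your observation that the unary code fails for $E_0$ (because altering a finite prefix of $p$ shifts the entire tail of the code) is correct and explains why the paper needs a separate argument for that case; your encoding avoids this by placing the information about $p(n)$ at positions whose location depends only on $n$, not on earlier values of $p$. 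The closing remark on why $\erange$ is excluded is a nice addition not present in the paper.
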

		\begin{proof}
			
			The fact that $E(\Cantor) \learnreducible{} E$ is trivial.
			
			To show that $E_0 \learnequiv{} E_0(\Cantor)$ just notice that the reduction given in \cite[Proposition 6.1.2]{gao2008invariant} is continuous. Notice that the same reduction, applied to every $p^{[m]}$ of an element $p \in \nats^{\nats \times \nats}$ also shows that $E_3 \learnequiv{} E_3(\Cantor)$.
			
			To show that $\Id \learnequiv{} \Id(\Cantor)$ it is easy to notice that the function $f:\Baire \rightarrow \Cantor$ defined as $f(p):=0^{p(0)}10^{p(1)}1\dots10^{p(n)}1\dots$ is a continuous reduction from $\Id$ to $\Id(\Cantor)$. To conclude notice that the same reduction, applied to every $p^{[m]}$ of an element $p \in \nats^{\nats \times \nats}$ also shows that $E_{set} \learnequiv{} E_{set}(\Cantor)$.
		\end{proof}
		
		We give an intuitive idea of the equivalence relations defined above.  The first two are respectively the identity on the natural numbers and the identity on infinite sequences. Instead, $E_0$ relaxes this notion by requiring the two infinite sequences to be the same from a certain point on. The equivalence relation $E_{range}$ considers two infinite sequences of natural numbers to be the same if they have the same range. Both $E_3$ and $\eset$ are defined on columns of infinite sequences. The equivalence relation $E_3$ demands that columns with the same index are the same from some point on; $\eset$ instead does not care neither of the indices of the columns nor of the multiplicity of each column. The only requirement is that a column appearing in the first set of infinite sequences must appear also in the second one and vice versa. 
		
		The following theorem collects some of the results obtained in \cite{bazhenov2021learning} that will be useful in the rest of the paper.

		\begin{theorem} 
			\thlabel{theorem:prevwork}
			The following relations hold between $\Id$, $E_0$, $E_3$, and $\eset$: 
			\[\Id\strictlearnreducible{} E_0 \strictlearnreducible{} E_3 \strictlearnreducible{} \eset \text{ and } \Id\strictlearnreducible{\finitary} E_0 \learnequiv{\finitary} E_3  \strictlearnreducible{\finitary} \eset.\]
		\end{theorem}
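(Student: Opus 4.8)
The plan is to split the statement into three parts: the forward learn-reductions (every $\learnreducible{}$ and $\learnreducible{\finitary}$ arrow, including one half of $E_0\learnequiv{\finitary}E_3$), the reverse half $E_3\learnreducible{\finitary}E_0$, and the strictness of each consecutive arrow. The key observation for the forward part is a transfer principle: if $g$ is a continuous reduction of an equivalence relation $E$ to $F$ between the appropriate product spaces, and $\Gamma$ witnesses that $\K$ is $E$-learnable, then $g\circ\Gamma$ is continuous and witnesses that $\K$ is $F$-learnable; hence $E\leq_c F$ implies both $E\learnreducible{}F$ and $E\learnreducible{\finitary}F$, irrespective of the size of $\K$. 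So it suffices to produce continuous reductions $\Id\leq_c E_0\leq_c E_3\leq_c\eset$.

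For $\Id\leq_c E_0$, map $p$ to $n\mapsto\code{p[n]}$: equal inputs give equal outputs, while if $p\neq q$ first disagree at position $d$ then the images disagree at every $n\geq d$, so they are not $E_0$-related. For $E_0\leq_c E_3$, send $p\in\Baire$ to the array in $\nats^{\nats\times\nats}$ all of whose columns equal $p$; the image of $p$ is $E_3$-related to that of $q$ exactly when, reading off column $0$, $p\mathrel{E_0}q$. For $E_3\leq_c\eset$ I would exploit that an $E_0$-class is countable and uniformly continuously enumerable: $[x]_{E_0}$ is precisely the set of sequences obtained from $x$ by overwriting a finite initial segment, i.e.\ $\{\,s^\frown\langle x(|s|),x(|s|+1),\dots\rangle : s\in\baire\,\}$. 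Fixing an enumeration $(s_j)_{j\in\nats}$ of $\baire$, map $p\in\nats^{\nats\times\nats}$ to the array whose column indexed by $\langle m,j\rangle$ is $\langle m\rangle^\frown$(the sequence obtained from $p^{[m]}$ by overwriting its first $|s_j|$ entries with $s_j$). The set of columns of this array is $\bigsqcup_m\{\langle m\rangle\}\times[p^{[m]}]_{E_0}$, so two such arrays are $\eset$-related iff $[p^{[m]}]_{E_0}=[q^{[m]}]_{E_0}$ for all $m$, i.e.\ iff $p\mathrel{E_3}q$. This yields all forward arrows, in particular $E_0\learnreducible{\finitary}E_3$.

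For the reverse finitary arrow $E_3\learnreducible{\finitary}E_0$ — which cannot come from a continuous reduction, since otherwise $E_0\strictlearnreducible{}E_3$ would fail — I would show that a finite $E_3$-learnable family is already $\ex$-learnable, hence $E_0$-learnable by the characterization of $\ex$-learnability as continuous reducibility to $E_0$ (equivalently, as being a $\sigmainf{2}$-strong antichain). Given an $E_3$-reduction $\Gamma$ of a finite family $\{\A_0,\dots,\A_{n-1}\}$, the images $\Gamma(\esse)$ lie in just $n$ many $E_3$-classes $C_0,\dots,C_{n-1}$, one per isomorphism type; for each pair $i\neq i'$ fix a column index $m_{i,i'}$ on which representatives of $C_i$ and $C_{i'}$ disagree infinitely often. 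On input $\Gamma(\esse)\in C_t$, a learner that determines in the limit, for each pair, which of the two prescribed eventual behaviours the column $m_{i,i'}$ of $\Gamma(\esse)$ matches will eventually rule out every index $k\neq t$ and stabilize to $t$; this is an $\ex$-learner. Finiteness is essential: the ``wait and compare across all pairs'' step is not uniform for infinitely many target classes, which is exactly why $E_0\strictlearnreducible{}E_3$ is strict in the general setting.

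It remains to establish strictness of each consecutive arrow by exhibiting a family learnable by the weaker relation but not the stronger. For $\Id\strictlearnreducible{}E_0$ and $\Id\strictlearnreducible{\finitary}E_0$, the family $\{\omega,\omega^*\}$ works: it is $\ex$-learnable by \thref{proposition:omega_vs_omega*}, hence $E_0$-learnable, but not $\Id$-learnable, since a reduction to $\Id$ must be constant on the copies of each order, whereas by continuity (cf.\ \thref{lemma:folklore}) its value at a copy is already pinned down by a finite linear-order prefix that extends both to a copy of $\omega$ and to a copy of $\omega^*$, forcing a contradiction. For $E_0\strictlearnreducible{}E_3$, $E_3\strictlearnreducible{}\eset$, and $E_3\strictlearnreducible{\finitary}\eset$ one needs witness families that are $E_3$- (respectively $\eset$-) learnable but not $\ex$-learnable, the failure of $\ex$-learnability being certified through the $\sigmainf{2}$-strong-antichain characterization; such families are those constructed in \cite{bazhenov2021learning}. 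I expect the genuine work to lie precisely in those separating constructions — and, on the positive side, in getting the $E_3\leq_c\eset$ encoding right so that eventual agreement along each column becomes set equality of encoded columns — while the remaining steps are routine once the encodings are fixed.
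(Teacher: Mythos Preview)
The paper does not prove this theorem at all: it is stated as a summary of results from \cite{bazhenov2021learning}, so there is no ``paper's own proof'' to compare against. Your proposal is an independent sketch, and it is essentially correct.

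Your transfer principle and the three continuous reductions $\Id\leq_c E_0\leq_c E_3\leq_c\eset$ are all valid; in particular the tagged-$E_0$-class encoding for $E_3\leq_c\eset$ works exactly as you describe. The $\{\omega,\omega^*\}$ argument for $\Id\strictlearnreducible{\finitary}E_0$ is also fine (and agrees with the paper's later observation that $\thsigma{1}{\omega}=\thsigma{1}{\omega^*}$). Deferring the remaining separations to \cite{bazhenov2021learning} is exactly what the paper itself does.

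The one imprecision is in your $E_3\learnreducible{\finitary}E_0$ sketch. You write that the learner ``determines, for each pair $(i,i')$, which of the two prescribed eventual behaviours the column $m_{i,i'}$ of $\Gamma(\esse)$ matches''. But when the true type $t$ is neither $i$ nor $i'$, the column $\Gamma(\esse)^{[m_{i,i'}]}$ need not eventually match either $\Gamma(\A_i)^{[m_{i,i'}]}$ or $\Gamma(\A_{i'})^{[m_{i,i'}]}$; it only matches $\Gamma(\A_t)^{[m_{i,i'}]}$. The fix is easy and your conclusion survives: for each candidate $k$ and each $j\neq k$, count disagreements between $\Gamma(\esse)^{[m_{k,j}]}$ and $\Gamma(\A_k)^{[m_{k,j}]}$; this count stays bounded when $k=t$ and diverges (via $j=t$) when $k\neq t$, so outputting the $k$ of current minimal score $\ex$-learns $\K$. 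A slicker alternative is to note that projecting $\Gamma$ onto the finitely many columns $\{m_{i,i'}:i<i'<n\}$ already yields a continuous reduction of $\ldofk$ to a finite power of $E_0$, which reduces continuously to $E_0$.
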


		We mention that  \cite{bazhenov2021learning}  also considered the learning power of the equivalence relations $E_1$, $E_2$, and $Z_0$: on the other hand, since they are not important for the purpose of this paper, we omit their definitions. We just mention here that both $E_1$ and $E_2$ have the same learning power as $E_0$ (\cite[Theorems 4.1 and 4.2]{bazhenov2021learning}), while $Z_0$ has the same learning power as $E_0$ for finite families (\cite[Theorem 6.2]{bazhenov2021learning}). From classical descriptive set-theoretic results, it also follows that $Z_0$ lies above $E_3$ and strictly below $\eset$ for infinite families, but it is still open whether $Z_0 \learnequiv{} E_3$.

		\section{Characterizing learnabilities in terms of $\sigmainf{1}$-formulas}
		\label{sec:sigma1}
		In this section we consider learning paradigms whose characterizations can be given in terms of $\sigmainf{1}$ formulas. 
		\subsection{$\sigmainf{1}$-(strong) antichains}
		
		It turns out that the analogous of $\ex$-learnability, i.e., the learning paradigm that can be characterized in terms of $\sigmainf{1}$-strong antichains is $\Fin$-learnability.

		\begin{theorem}
			\thlabel{theorem:sigma1strongantichain}
			$\K$ is $\Fin$-learnable if and only if $\K$ is a  $\sigmainf{1}$-strong antichain.
		\end{theorem}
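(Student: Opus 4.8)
The plan is to mirror the proof of the $\ex$ versus $\sigmainf{2}$ characterization from \cite{bazhenov2020learning}, turning every appeal to \lq\lq eventually\rq\rq\ into an appeal to \lq\lq at a finite stage\rq\rq. The engine of the whole argument is the following observation about $\sigmainf{1}$-formulas: a $\sigmainf{1}$-formula $\varphi=\bigdoublevee_{d}\exists\overline{y}_d\,\psi_d(\overline{y}_d)$, with each $\psi_d$ finitary quantifier-free, holds in a structure $\B$ if and only if some finite induced substructure of $\B$ already realizes one of its disjuncts; moreover such a realization is \emph{monotone}, since the quantifier-free facts about a fixed tuple agree in a structure and in any induced substructure containing that tuple. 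This is exactly the feature that makes $\sigmainf{1}$-definability correspond to learning with no mind changes: as a copy of a structure is revealed stage by stage, the defining formula gets certified at some finite stage and stays certified.

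For the implication \lq\lq $\sigmainf{1}$-strong antichain $\Rightarrow$ $\Fin$-learnable\rq\rq, write $\K=\{\A_i:i\in\nats\}$ and fix $\sigmainf{1}$-formulas $\varphi_i=\bigdoublevee_d\exists\overline{y}_d\,\psi_{i,d}$ with $\A_i\models\varphi_j\Leftrightarrow i=j$. I would define a learner $\learnerM$ that, on input $\esse\restriction_s$, outputs $\code{\A_i}$ for the least $i$ such that some disjunct of $\varphi_i$ is realized in $\esse\restriction_s$ by a tuple from $\{0,\dots,s\}$, and outputs $?$ when no such $i$ exists. I would then check three points. (i) On every $\esse\cong\A_k$ the only conjecture $\learnerM$ can ever output is $\code{\A_k}$: if $\esse\restriction_s$ realizes a disjunct of $\varphi_i$ then so does $\esse$, hence $\A_k\models\varphi_i$, hence $i=k$ by the strong-antichain hypothesis. (ii) $\learnerM$ does eventually output $\code{\A_k}$: since $\A_k\models\varphi_k$, some disjunct of $\varphi_k$ has a witnessing tuple in $\A_k$, and the image of that tuple under an isomorphism $\A_k\cong\esse$ is contained in $\esse\restriction_s$ for all large $s$. (iii) $\learnerM$ performs no mind change on any $\esse\in\ldofk$: by monotonicity, once a disjunct of $\varphi_k$ is realized at a stage $s_0$ it remains realized at every later stage, so the output sequence on $\esse$ is $?^{s_0}\,\code{\A_k}^{\nats}$. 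Hence $\learnerM$ $\Fin$-learns $\K$; the case $|\K|\le 1$ is trivial.

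For the converse I would fix a $\Fin$-learner $\learnerM$ for $\K$ and use the fact that this forces the output on every $\esse\in\ldofk$ to have the shape $?^{k}\,\code{\A_i}^{\nats}$ with $\esse\cong\A_i$, so that $\learnerM$ never revises a conjecture once it has been made. For each $s$ and each finite structure $F$ on domain $\{0,\dots,s\}$ that arises as $\esse\restriction_s$ for some $\esse\in\ldofk$ and satisfies $\learnerM(F)=\code{\A_i}$, let $\delta_F(x_0,\dots,x_s)$ be the finitary quantifier-free formula stating that $x_0,\dots,x_s$ are pairwise distinct and their induced substructure is a copy of $F$, and put $\varphi_i:=\bigdoublevee_{F\,:\,\learnerM(F)=\code{\A_i}}\exists x_0\cdots x_s\,\delta_F$, which is a $\sigmainf{1}$-formula. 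I would verify $\A_i\models\varphi_i$ by running $\learnerM$ on a copy of $\A_i$: it outputs $\code{\A_i}$ at some stage $s$, so $F=\esse\restriction_s$ is among the disjuncts and is realized in $\A_i$. Conversely, if $\A_j\models\varphi_i$, a witness in $\A_j$ of some disjunct $\exists\overline{x}\,\delta_F$ is a tuple whose induced substructure is $F$; enumerating $\A_j$ by an enumeration listing that tuple first yields a copy $\esse\cong\A_j$ with $\esse\restriction_s=F$, whence $\learnerM(\esse\restriction_s)=\code{\A_i}$. Since $\learnerM$ never revises, $\lim_t\learnerM(\esse\restriction_t)=\code{\A_i}$; but $\learnerM$ $\Fin$-learns the copy $\esse\cong\A_j$, so the limit equals $\code{\A_j}$, and therefore $i=j$.

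The step I expect to be the main obstacle is not conceptual but a matter of getting the definitions exactly right: making \lq\lq $F$ arises as some $\esse\restriction_s$\rq\rq\ and \lq\lq realizes a disjunct\rq\rq\ precise, confirming that each $\delta_F$ is genuinely finitary (the signature is finite relational and the domain of $F$ is finite), verifying that realization is monotone in $s$, and pinning down the exact normal form of a $\Fin$-learner so that \lq\lq outputs $\code{\A_i}$ at some stage\rq\rq\ really does force \lq\lq converges to $\code{\A_i}$\rq\rq. Beyond the $\sigmainf{1}$-witnessing observation of the first paragraph, no new ingredient should be needed.
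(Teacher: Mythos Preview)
Your proposal is correct and follows essentially the same approach as the paper. The only cosmetic difference is in the converse direction: the paper, having fixed the $\Fin$-learner $\learnerM$, defines each $\varphi_i$ from a \emph{single} finite configuration, namely $\A_i\restriction_{s_i}$ where $s_i$ is the first stage at which $\learnerM$ drops the question mark on the canonical copy $\A_i$, whereas you take the (larger) disjunction over all finite $F$ that trigger $\learnerM$ to output $\code{\A_i}$; both choices work for the same reason, and your monotonicity remarks in the forward direction make explicit what the paper leaves implicit.
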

		
		\begin{proof}
			Assume that $\K$ is a $\sigmainf{1}$ strong antichain. For any $i$, let $\varphi_i$ be the $\sigmainf{1}$ formula such that for any $j$, $\A_j \models  \varphi_i \iff i=j$. 
			
			Given $\esse \in \ldofk$, since any $\varphi_i$ is a $\sigmainf{1}$ formula, we may wait for a stage $s$ such that $\esse\restriction_s \models \varphi_i$ for some $i$ (notice that such an $i$ is unique). Then, for any $t <s$, let $\learnerM(\esse\restriction_t)=?$ and for any $t \geq s$, let $\learnerM(\esse\restriction_t)=\code{\A_i}$: it is clear that $\learnerM$ $\Fin$-learns $\K$.
			
			For the opposite direction, suppose that $\K$ is $\Fin$-learnable by some learner $\learnerM$. Then for any $i$, consider $\learnerM(\A_i)$ and let $s_i\defas \min \{s: \learnerM(\A_i\restriction_{s}) \neq ?\}$. Let  $\varphi_i$ be the $\sigmainf{1}$ formula saying that there exists a finite substructure isomorphic to $\A_i\restriction_{s_i}$. We claim that for any $i,j$, $\A_j \models  \varphi_i \iff i=j$. 
			
			Suppose otherwise, i.e., suppose that there is some $j \neq i$ such that $\A_j \models \varphi_i$.
			This means that we could define a copy $\esse$ of $\A_j$ such that $\esse\restriction_{s_i} \cong \A_i\restriction_{s_i}$. But then $\learnerM(\esse\restriction_{s_i})=\code{\A_i}$ while $\esse \cong \A_j$ obtaining that $\learnerM$ does not $\Fin$-learn $\K$.
		\end{proof}
		
		It is easy to show that $\Fin$-learnability can also be interpreted in terms of $=_\nats$-learnability.
		
		\begin{theorem}
			\thlabel{theorem:fincharacterization}
			Let $\K=\{\A_i:i \in \nats\}$: $\K$ is $\Fin$-learnable if and only if $\K$ is $=_\nats$-learnable.
		\end{theorem}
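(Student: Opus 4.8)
The plan is to route the proof through the syntactic characterization of $\Fin$-learnability already established in \thref{theorem:sigma1strongantichain}: it then suffices to show that $\K=\{\A_i:i\in\nats\}$ is a $\sigmainf{1}$-strong antichain if and only if $\K$ is $=_\nats$-learnable. (One could equally argue directly, but the two implications below are essentially the ones already used in the proof of \thref{theorem:sigma1strongantichain}, which is why this theorem should be viewed as an easy companion to it.) Throughout, I read ``$=_\nats$-learnable'' as: there is a continuous $\Gamma$ reducing the isomorphism relation on $\ldofk$ to $=_\nats$ — its values may be taken in $\nats$ with the discrete topology, or, if one insists on the codomain $\Baire$ of \thref{definition:elearnability}, replaced by the constant sequences $n^\nats$; this choice is immaterial.

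For the implication from $\sigmainf{1}$-strong antichains to $=_\nats$-learnability, suppose $\{\varphi_i:i\in\nats\}$ are $\sigmainf{1}$ formulas with $\A_j\models\varphi_i\Leftrightarrow i=j$, and set $U_i:=\{\esse\in\ldofk:\esse\models\varphi_i\}$. Since each $\varphi_i$ is a countable disjunction of existentially quantified quantifier-free formulas, the truth of $\varphi_i$ in a structure is witnessed by a finite part of its atomic diagram, so $U_i$ is open in $\ldofk$; and because on $\ldofk$ we have $\esse\models\varphi_i$ exactly when $\esse\cong\A_i$, the $U_i$ are pairwise disjoint and cover $\ldofk$, hence each $U_i$ is also closed in $\ldofk$. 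Then $\Gamma(\esse):=i$ for the unique $i$ with $\esse\in U_i$ is continuous (the preimage of each point is the clopen set $U_i$) and satisfies $\esse\cong\esse'\Leftrightarrow\Gamma(\esse)=_\nats\Gamma(\esse')$, so $\K$ is $=_\nats$-learnable.

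For the converse, let $\Gamma$ continuously reduce the isomorphism relation on $\ldofk$ to $=_\nats$, and put $n_i:=\Gamma(\A_i)$; the $n_i$ are pairwise distinct. By continuity of $\Gamma$ at $\A_i$ there is $s_i$ such that $\Gamma(\esse)=n_i$ for every $\esse\in\ldofk$ with $\esse\restriction_{s_i}=\A_i\restriction_{s_i}$. As the signature is finite and relational, the isomorphism type of the finite structure $\A_i\restriction_{s_i}$ is pinned down by a single quantifier-free formula $\delta_i(x_0,\dots,x_{s_i})$ (the conjunction of all atomic and negated-atomic statements and all inequalities true of $0,\dots,s_i$ in $\A_i$); let $\varphi_i:=\exists x_0\cdots\exists x_{s_i}\,\delta_i$, a $\sigmainf{1}$ formula. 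Clearly $\A_i\models\varphi_i$. If $\A_j\models\varphi_i$ for some $j\neq i$, take a tuple in $\A_j$ witnessing $\delta_i$ and build a copy $\esse\cong\A_j$ by sending that tuple to $0,\dots,s_i$ in order and the rest of $\A_j$ bijectively onto $\{s_i+1,s_i+2,\dots\}$; since $\delta_i$ is the \emph{complete} quantifier-free diagram of $\A_i\restriction_{s_i}$, the structure $\esse\restriction_{s_i}$ has the same atomic diagram as $\A_i\restriction_{s_i}$, i.e.\ $\esse\restriction_{s_i}=\A_i\restriction_{s_i}$. Then $\Gamma(\esse)=n_i$, while $\esse\cong\A_j$ forces $\Gamma(\esse)=n_j\neq n_i$, a contradiction. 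Hence $\A_j\models\varphi_i\Leftrightarrow i=j$, so $\K$ is a $\sigmainf{1}$-strong antichain, and \thref{theorem:sigma1strongantichain} finishes the argument.

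The one place I would be careful is the last step: producing a copy of $\A_j$ that agrees with $\A_i\restriction_{s_i}$ \emph{literally} on its first $s_i+1$ elements (not merely up to isomorphism), which is exactly what allows us to invoke the continuity modulus $s_i$ of $\Gamma$. This is where the finite-relational-signature assumption is essential — it lets a single quantifier-free formula determine the atomic diagram of a finite substructure — and modulo this point everything reduces to routine bookkeeping with the coding of structures as elements of $\Cantor$.
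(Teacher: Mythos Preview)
Your proof is correct but takes a different route from the paper's. The paper argues directly, without invoking \thref{theorem:sigma1strongantichain}: from a $\Fin$-learner $\learnerM$ one defines $\Gamma(\esse)$ to be the first non-$?$ conjecture $\learnerM(\esse\restriction_s)$, which is continuous because it depends only on a finite initial segment; conversely, from a continuous $\Gamma$ one defines a learner that outputs $?$ until $\Gamma(\esse\restriction_t)$ becomes defined and thereafter outputs $\code{\A_i}$ for the $i$ with $\Gamma(\A_i)=\Gamma(\esse)$. Your approach instead factors through the syntactic characterization, essentially reproving both directions of \thref{theorem:sigma1strongantichain} with $\Gamma$ in place of the learner. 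This has the merit of making the parallel with that theorem explicit and of isolating precisely where the finite-relational-signature hypothesis is used (in pinning down $\A_i\restriction_{s_i}$ by a single quantifier-free formula), but the paper's direct argument is shorter and shows that the passage between $\Fin$-learners and $=_\nats$-reductions is little more than an unpacking of definitions.
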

		\begin{proof}
			Suppose $\K$ is $\Fin$-learnable. Given $\esse \in \ldofk$ we define a continuous reduction $\Gamma$ from $\ldofk$ to $=$ just letting $\Gamma(\esse)=\learnerM(\esse\restriction_s)(s)$ where $s=\min\{t: \learnerM(\esse\restriction_t) \neq ?\}$.
			
			For the opposite direction, given a continuous reduction $\Gamma$ from $\ldofk$ to $=_\nats$, let $a_i=\Gamma(A_i)$. Then, given $\esse \in \ldofk$, let $\learnerM(\esse\restriction_s)=?$ if $\Gamma(\esse\restriction_s)$ is not defined. Since $\Gamma$ is a reduction from $\ldofk$ to $=$ there will be a stage $t$ such that $\Gamma(\esse\restriction_t)=a_i$ for some $i \in \nats$: at this point, for any $k\geq t$ let $\learnerM(\esse\restriction_k)= \code{\A_i}$. 
		\end{proof}

		The following learning criterion was firstly introduced in the context of learning of total recursive functions by Freivalds, Karpinski and Smith in \cite{colearning}.
		
		\begin{definition}
			\thlabel{definition:colearning}
			We say that $\K:=\{\A_i:i \in \nats\}$ is \emph{$\col$-learnable} if there is a learner $\learnerM$ such that for any $\esse \in \ldofk$, \[\{\learnerM(\esse\restriction_s) : s \in \nats\}=\{\code{\A_i}:i \in \nats\} \cup \{?\} \setminus \{\code{\A_j}\} \iff \esse=\A_j.\]
		\end{definition}
		The next proposition shows that $\col$-learnability can be characterized in terms of $\sigmainf{1}$-antichains.
		
		\begin{theorem}
			\thlabel{theorem:12antichains}
			
			$\K$ is $\col$-learnable if and only if $\K$ is a $\sigmainf{1}$-antichain.
		\end{theorem}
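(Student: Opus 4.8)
The plan is to prove the two implications separately, closely following the template of the proof of \thref{theorem:sigma1strongantichain}.

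For the direction \lq\lq $\col$-learnable $\Rightarrow$ $\sigmainf{1}$-antichain\rq\rq, I would fix a learner $\learnerM$ that $\col$-learns $\K$ together with a pair $i \neq j$, and exhibit a $\sigmainf{1}$ formula separating $\A_i$ from $\A_j$ as follows. On the canonical copy $\A_i$ the set of conjectures of $\learnerM$ is $\{\code{\A_k}: k \neq i\} \cup \{?\}$, so in particular there is a stage $s$ with $\learnerM(\A_i\restriction_s) = \code{\A_j}$. Let $\varphi_{i,j}$ be the $\sigmainf{1}$ sentence $\exists \bar x\,\delta(\bar x)$, where $\delta$ is the (finite, quantifier-free) atomic diagram of $\A_i\restriction_s$, including the inequations among the variables $\bar x$; then $\A_i \models \varphi_{i,j}$. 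If we also had $\A_j \models \varphi_{i,j}$, a witnessing tuple would let me permute the domain of $\A_j$ into a copy $\esse \cong \A_j$ with $\esse\restriction_s = \A_i\restriction_s$, whence $\learnerM(\esse\restriction_s) = \code{\A_j}$; but then $\code{\A_j}$ occurs among the conjectures of $\learnerM$ on $\esse$, contradicting $\col$-learnability since $\esse \cong \A_j$. So $\A_j \not\models \varphi_{i,j}$, and applying this to both $(i,j)$ and $(j,i)$ yields the two formulas required by the definition of a $\sigmainf{1}$-antichain.

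For the converse, assume $\K$ is a $\sigmainf{1}$-antichain. For each ordered pair $k \neq i$ fix a $\sigmainf{1}$ formula $\varphi_{k,i}$ with $\A_k \models \varphi_{k,i}$ and $\A_i \not\models \varphi_{k,i}$, and set $\psi_i := \bigdoublevee_{k \neq i}\varphi_{k,i}$, which is again $\sigmainf{1}$ and satisfies $\A_j \models \psi_i \iff j \neq i$. The idea is that $\psi_i$ is a \lq\lq certificate that the input is not isomorphic to $\A_i$\rq\rq\ which, being $\sigmainf{1}$, is confirmed at a finite stage. I would let $\learnerM$ on input $\esse$ output $?$ at stage $0$, and at each stage $s \geq 1$ let $R_s$ be the (monotone in $s$) set of indices $i$ such that $\esse\restriction_s$ already witnesses some disjunct of $\psi_i$, and output $\code{\A_i}$ for the least $i \in R_s$ not conjectured at an earlier stage (outputting $?$ when there is no such $i$). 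If $\esse \cong \A_j$ then $\bigcup_s R_s = \{i : i \neq j\}$, so the total set of conjectures of $\learnerM$ on $\esse$ is exactly $\{\code{\A_i}: i \neq j\} \cup \{?\}$; since this set determines $j$, the biconditional of \thref{definition:colearning} follows.

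Most of the ingredients are routine: that $\varphi_{i,j}$ and $\psi_i$ lie in $\sigmainf{1}$; that a model of a $\sigmainf{1}$ sentence has a finite substructure witnessing one of its disjuncts (so the $R_s$ exhaust $\{i : \esse \models \psi_i\}$); and that one may rearrange the universe of $\A_j$ to realize a prescribed finite initial segment. The only step I expect to require a genuine, if short, argument is the round-robin bookkeeping in the converse: I must verify that every index that ever enters some $R_s$ is eventually output, rather than being perpetually postponed by newly arriving smaller indices. This follows by induction on $i \in \bigcup_s R_s$: once every smaller member of $\bigcup_s R_s$ has been output and $i$ itself has entered $R_s$, the set of indices in $R_s$ not yet conjectured contains no index below $i$, forcing $\learnerM$ to output $i$ at the next stage. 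I anticipate this to be the only delicate point.
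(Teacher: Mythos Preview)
Your proposal is correct. Both directions go through as you describe, and the round-robin induction you flag as the only delicate point is indeed sound: since there are only finitely many indices below any given $i$, strong induction on $i \in \bigcup_s R_s$ shows each such $i$ is eventually output.

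The paper's argument differs in execution on both halves. For the forward direction, the paper argues by contrapositive: assuming $\thsigma{1}{\A} \subset \thsigma{1}{\B}$ for some $\A,\B \in \K$, it builds a single bad copy (start as $\A$, wait for $\learnerM$ to output $\code{\B}$, then switch to $\B$). Your direct approach instead extracts the separating formula $\varphi_{i,j}$ explicitly from the stage at which $\learnerM$ outputs $\code{\A_j}$ on $\A_i$; this is closer in spirit to the proof of \thref{theorem:sigma1strongantichain} and is slightly more constructive. For the converse, the paper avoids your round-robin bookkeeping altogether by indexing stages via a pairing $\langle i,s \rangle$: at stage $\langle i,s \rangle$ the learner outputs $\code{\A_i}$ iff some $\varphi_{ji}$ already holds of $\esse\restriction_s$, and outputs $?$ otherwise. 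This makes the verification that every $\code{\A_i}$ with $i \neq j$ is output (and $\code{\A_j}$ never is) immediate, with no induction needed. Your version works equally well but trades the pairing trick for the short inductive argument you identified.
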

		\begin{proof}
			For the left-to-right direction, we prove the contrapositive. Suppose that $\K\supseteq\{\A,\B\}$ is not a $\sigmainf{1}$-antichain but there is a learner $\learnerM$ which $\col$-learns $\K$. Without loss of generality assume that $\thsigma{1}{\A}\subset \thsigma{1}{\B}$ and let $\varphi$ be  such that $\B \models \varphi$ and $\A \not\models \varphi$.
			Furthermore, we can assume that if $\esse \cong\A$ then $\code{\A} \notin \{\learnerM(\esse\restriction_t): t \in \nats\}$, while if $\esse\cong\B$ then $\code{\B}\notin \{\learnerM(\esse\restriction_t): t \in \nats\}$. We define a copy $\esse$ as follows: At the beginning we start defining $\esse$ as a copy of $\A$: if for all $s$, $\code{\B} \notin \{\learnerM(\esse\restriction_t): t \leq s\}$ we get that $\learnerM$ fails to $\col$-learn $\K$. Otherwise, assume that at stage $s$, $\code{\B} \in \{\learnerM(\esse\restriction_t): t \leq s\}$. Then we let $\esse' \cong \B$ be an extension
			of $\esse\restriction_t$, so that $\learnerM$ does not $\col$-learn $\esse'$.
			
			For the opposite direction, let $\K=\{\A_i:i \in \nats\}$ and, for any $i \neq j$, let $\varphi_{ij}$ be a $\sigmainf{1}$ formula that is satisfied by $\A_i$ but not by $\A_j$. We say that ${\A_n}$ is \emph{triggered} at stage $s$ if there exists some $j \neq n$ such that $\esse\restriction_s\models \varphi_{jn}$. Given $\esse \in \ldofk$, let 
			\[\learnerM(\esse)(\str{i,s}):=\begin{cases}
				\code{\A_i}& \text{if } {\A_i}  \text{ is triggered at stage } s,\\
				? & \text{otherwise.}
			\end{cases}\]
			To conclude the proof, notice that if $\esse \cong \A_i$, then ${\A_i}$ is never triggered and this implies  that $\code{\A_i} \notin \{\learnerM(\esse\restriction_s):s \in \nats\}$. If $\esse\cong \A_j\not\cong\A_i$, then there is some stage $s$ such that ${\A_i}$ is triggered, i.e., the stage such that $\esse\restriction_s \models \varphi_{ji}$ and hence $\code{\A_i} \in \{\learnerM(\esse\restriction_s):s \in \nats\}$. This concludes the proof.
		\end{proof}
		
		We conclude this section summarizing the relationships between $\Fin$-learnability, $\col$-learnability and another natural learning criterion, namely $\Id$-learnability, already considered in \cite{bazhenov2021learning,bazcipsan_cie}.

		\begin{corollary}
			\thlabel{corollary:finidco}
			$\Fin\strictlearnreducible{}\Id \strictlearnreducible{} \col$ and
			$\Fin\learnequiv{\mathrm{fin}} \Id \learnequiv{\mathrm{fin}} \col$.
		\end{corollary}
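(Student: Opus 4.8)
The plan is to prove the reducibility chain $\Fin\learnreducible{}\Id\learnreducible{}\col$ — which, being witnessed by constructions that never use infiniteness of $\K$, also gives $\Fin\learnreducible{\mathrm{fin}}\Id\learnreducible{\mathrm{fin}}\col$ — together with the backwards finitary reduction $\col\learnreducible{\mathrm{fin}}\Fin$, and then to separate the three paradigms on infinite families by two explicit families of graphs.

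For $\Fin\learnreducible{}\Id$: if $\K$ is $\Fin$-learnable then by \thref{theorem:sigma1strongantichain} it is a $\sigmainf{1}$-strong antichain, witnessed by formulas $\{\varphi_i\}_{i\in\nats}$; define $\Gamma$ on $\ldofk$ to output nothing until it sees a fragment $\esse\restriction_s\models\varphi_i$ (such an $i$ being unique, since $\varphi_i$ is existential and its truth propagates from $\esse\restriction_s$ to $\esse$), and from then on the constant sequence $i^{\nats}$; this $\Gamma$ is continuous, total on $\ldofk$, and reduces isomorphism on $\ldofk$ to $\Id$. (Equivalently, by \thref{theorem:fincharacterization} $\K$ reduces to $=_\nats$, and post-composing with $n\mapsto n^{\nats}$ reduces it to $\Id$.) For $\Id\learnreducible{}\col$: given a reduction $\Gamma$ of $\ldofk$ to $\Id$, set $\gamma_i\defas\Gamma(\A_i)$ — these are pairwise distinct — and let the learner emit, at stage $\str{i,s}$, the conjecture $\code{\A_i}$ as soon as the prefix of $\Gamma(\esse)$ determined by $\esse\restriction_s$ diverges from $\gamma_i$, and $?$ otherwise; if $\esse\cong\A_k$ then every $\gamma_i$ with $i\neq k$ is eventually refuted (as $\gamma_i\neq\gamma_k=\Gamma(\esse)$) while $\code{\A_k}$ is never emitted, so the conjectures output are exactly $\{\code{\A_i}:i\in\nats\}\cup\{?\}\setminus\{\code{\A_k}\}$. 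Both arguments are unchanged for finite $\K$; conversely, a finite $\col$-learnable family is a $\sigmainf{1}$-antichain (\thref{theorem:12antichains}), hence a $\sigmainf{1}$-strong antichain by \thref{lemma:easyposet}(c), hence $\Fin$-learnable. This closes the finitary cycle and gives $\Fin\learnequiv{\mathrm{fin}}\Id\learnequiv{\mathrm{fin}}\col$.

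For the strict separations I would use families of graphs. For $\Id\strictlearnreducible{}\col$, put $G_i\defas\bigl(\bigoplus_{j\neq i}\mathsf{C}_{j+3}\bigr)\oplus\mathsf{I}$ (one cycle of each length $\geq 3$ except $i+3$, plus infinitely many isolated vertices). For $i\neq i'$, ``there is an $(i'+3)$-cycle'' lies in $\thsigma{1}{G_i}\setminus\thsigma{1}{G_{i'}}$ and symmetrically, so $\{G_i\}$ is a $\sigmainf{1}$-antichain and thus $\col$-learnable by \thref{theorem:12antichains}. It is not $\Id$-learnable: a reduction $\Gamma$ to $\Id$ would, by continuity (\thref{lemma:folklore}), for every $i$ and $n$ admit a finite fragment $\sigma$ of a copy of $G_i$ forcing $\Gamma(\cdot)[n]$; since a finite substructure of $G_i$ uses only finitely many cycle lengths, it embeds into $G_k$ for every $k$ whose omitted length $k+3$ avoids them, so $\sigma$ is also a fragment of a copy of $G_k$ for all large $k$, giving $\Gamma(G_k)[n]=\Gamma(G_i)[n]$ for cofinitely many $k$; hence $\Gamma(G_k)\to\Gamma(G_i)$ as $k\to\infty$ for every $i$, forcing all $\Gamma(G_i)$ equal — a contradiction. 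For $\Fin\strictlearnreducible{}\Id$, let $H_*\defas\bigl(\bigoplus_{n\in\nats}\mathsf{C}_{2n+3}\bigr)\oplus\mathsf{I}$ (all odd cycles $\geq 3$) and, for $j\in\nats$, $H_j\defas\bigl(\bigoplus_{n\neq j}\mathsf{C}_{2n+3}\bigr)\oplus\mathsf{C}_{2j+4}\oplus\mathsf{I}$ (all odd cycles except $\mathsf{C}_{2j+3}$, together with the single even cycle $\mathsf{C}_{2j+4}$), and $\K\defas\{H_*\}\cup\{H_j:j\in\nats\}$. This $\K$ is a $\sigmainf{1}$-antichain ($\mathsf{C}_{2j+4}$ separates $H_j$ from every other member, and $\mathsf{C}_{2j+3}$ separates $H_*$ and each $H_{j'}$, $j'\neq j$, from $H_j$), so it is $\col$-learnable; but it is not $\Fin$-learnable, because no $\sigmainf{1}$ sentence isolates $H_*$: any finite induced substructure of $H_*$ is a disjoint union of finitely many whole odd cycles, some paths, and some isolated vertices, hence embeds (as an induced substructure, respecting the labelling of witnesses) into $H_j$ for every $j$ outside the finitely many cycle-indices it uses, so every $\sigmainf{1}$ sentence true in $H_*$ is true in some $H_j$. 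On the other hand $\K$ is $\Id$-learnable via $\Gamma(\esse)(n)\defas 0$ if $\esse$ contains a $(2n+3)$-cycle and $\Gamma(\esse)(n)\defas 1$ if $\esse$ contains a $(2n+4)$-cycle: on $\ldofk$ exactly one alternative holds at each coordinate and is confirmed after finitely many stages, so $\Gamma$ is continuous and total on $\ldofk$ with $\Gamma(H_*)=0^{\nats}$ and $\Gamma(H_j)=0^{j}10^{\nats}$, all distinct.

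The main obstacle is the family witnessing $\Fin\strictlearnreducible{}\Id$: one needs a member with no finite ``fingerprint'' (so that $\K$ fails to be a $\sigmainf{1}$-strong antichain) that is nevertheless a topological limit of the other members under some continuous iso-invariant injection into $\Baire$ (so that the $\Id$-reduction can commit to its code in the limit), all while keeping the family a $\sigmainf{1}$-antichain. The example above resolves this tension by equipping each $H_j$ with the private even cycle $\mathsf{C}_{2j+4}$: this is the fingerprint of $H_j$ and also pushes the ``$1$'' in $\Gamma(H_j)$ out to position $j$, so that $\Gamma(H_j)\to\Gamma(H_*)=0^{\nats}$; meanwhile $H_*$ still contains $\mathsf{C}_{2j+3}$, so $H_*$ is not $\subseteq$-below $H_j$ in the $\sigmainf{1}$-theory ordering. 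Verifying these properties — especially that the stated $\Gamma$ is well-defined and total on $\ldofk$, and that $H_*$ genuinely has no $\sigmainf{1}$ certificate — is the bulk of the work.
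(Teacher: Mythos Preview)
Your proof is correct. The reducibility directions $\Fin\learnreducible{}\Id\learnreducible{}\col$ match the paper's approach essentially verbatim, and your closure of the finitary cycle via the syntactic characterizations (\thref{theorem:sigma1strongantichain}, \thref{theorem:12antichains}, \thref{lemma:easyposet}(c)) is a clean alternative to the paper's direct construction of a $=_\nats$-reduction from a $\col$-learner.

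The separations are where you diverge. For $\col\notlearnreducible{}\Id$ you use (up to the harmless $\oplus\mathsf{I}$) the same family as the paper, but your non-reducibility argument is different: the paper fixes two structures $\A_n,\A_m$, finds a stage where $\Gamma$ separates them, and then exhibits a single $\A_k$ with two copies extending the two prefixes, giving inconsistent outputs; you instead run a topological limit argument showing $\Gamma(G_k)\to\Gamma(G_i)$ for \emph{every} $i$, hence all $\Gamma(G_i)$ collapse. Both are valid; yours is more conceptual, the paper's more hands-on. For $\Id\notlearnreducible{}\Fin$ the paper simply cites \cite{bazcipsan_cie} and names the family $\{\mathsf{C}_n\oplus\mathsf{I}:n\geq 3\}\cup\{\mathsf{R}\oplus\mathsf{I}\}$ without reproving $\Id$-learnability. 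Your family $\{H_*\}\cup\{H_j\}$ is more elaborate --- trading the single infinite ray for ``all odd cycles'' and tagging each $H_j$ with a private even cycle --- but it buys you a fully self-contained argument: the explicit $\Gamma(\esse)(n)\in\{0,1\}$ depending on which of $\mathsf{C}_{2n+3},\mathsf{C}_{2n+4}$ appears is visibly continuous and injective on isomorphism types, and the failure of a $\sigmainf{1}$ certificate for $H_*$ is immediate. Either family works; the paper's is simpler, yours avoids the external reference.
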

		\begin{proof}
			To prove that  $\Fin\learnreducible{} \Id$ (and, in particular, $\Fin\learnreducible{\mathrm{fin}} \Id$) notice that $\Fin \learnequiv{} =_\nats$ (\thref{theorem:fincharacterization}) and it is easy to notice that $=_\nats \learnreducible{} \Id$. The fact that $\Id \not\learnreducible{} \Fin$
			follows from \cite[Theorem 1, Proposition 1]{bazcipsan_cie}: to provide an example of a family that is $\Id$-learnable but not $\Fin$-learnable,
			consider the family of undirected graphs $\K=\{\mathsf{C}_n \oplus \mathsf{I}: n \geq 3\} \cup \{\mathsf{R} \oplus \mathsf{I}\}$.
			
			To prove that $\Id\learnreducible{}\col$, suppose that $\K=\{\A_i:i \in \nats\}$ is $\Id$-learnable and let $\Gamma$ be the reduction witnessing that $\ldofk$ continuously reduces to $\Id$. We define a learner $\learnerM$ which co-learns $\K$ as follows: Let $\learnerM(\esse\restriction_s):=$
			\[
			\begin{cases}
				\code{\A_k}:=\min\{\code{\A_i} : \Gamma(\esse\restriction_s) \text{ is inconsistent with } \Gamma(\A_i\restriction_s)  \text{ and } & \text{ if $\code{\A_k}$ exists,}\\
				 \qquad \qquad \qquad  (\forall t < s)(\learnerM(\esse\restriction_t) \neq \code{\A_i})\}\\
				? & \text{otherwise.}
			\end{cases}
			\]   
			Intuitively, $\learnerM$ outputs $\code{\A_i}$ if it witnesses that the $\Gamma(\esse) \centernot{\Id} \Gamma(\A_i)$.
			To see that $\learnerM$ $\col$-learns $\K$ given $\esse \in \ldofk$, assume that $\esse \cong \A_i$. Notice that  for every $t$, $\learnerM(\esse\restriction_t) \neq \code{\A_i}$: otherwise, by $\learnerM$'s definition, there would be a stage $s$ such that $\Gamma(\esse)[s] \neq \Gamma(\A_i)[s]$, contradicting the assumption that $\K$ is $\Id$-learnable. To conclude the proof, we need to show that for any $j \neq i$ there exists some $t$ such that $\learnerM(\esse\restriction_t)=\code{\A_j}$. To prove this, let $s_j:=\min\{t: \Gamma(\esse\restriction_t)\text{ is inconsistent with } \Gamma(\A_j\restriction_t)\}$ (notice that such an $s_j$ must exists since $\K$ is $\Id$-learnable). Notice that there exists some $t\leq s_j+j+1$ such that $\learnerM(\esse\restriction_{{s_j+j+1}})=\code{\A_j}$: this concludes the proof of $\Id \learnreducible{} \col$.
		
			To prove that $\col \not\learnreducible{} \Id$, for $n,m \geq 3$, let $\A_n= \underset{\begin{subarray}{c} n,m \geq 3\\ n \neq m \end{subarray}}{\bigsqcup} \mathsf{C}_m$, i.e., the disjoint union of all cycles of length $m$ for $m \neq n$. Consider $\K=\{\A_n: n \geq 3\}$. It is easy to notice that $\K$ is $\col$-learnable: indeed, we can define a learner $\learnerM$ that, given in input $\esse \in \ldofk$, outputs $\code{\A_n}$ when $\esse$ contains a copy of $\mathsf{C}_n$. Clearly, if $\esse \cong \A_n$, $\mathsf{C}_n$ will never be contained in $\esse$ and hence $\learnerM$ will never output $\code{\A_n}$. Since every $\mathsf{C}_m$ with $m \neq n$ and $m \geq 3$ will be contained in $\esse$, $\learnerM$ will eventually output $\code{\A_m}$ for every $m \neq n$ and $m\geq 3$, showing that $\K$ is $\col$-learnable. It remains to show that $\K$ is not $\Id$-learnable: Assume that there exists a continous reduction $\Gamma$ from $\ldofk$ to $\Id$. Take any two structures $\A_n,\A_m \in \K$ with $n \neq m$ and $n,m \geq 3$ and let $s := \min \{t: \Gamma(\A_n\restriction_t) \text{ is inconsistent with } \Gamma(\A_m\restriction_t)\}$ (such an $s$ must exists as we are assuming that $\K$ is $\Id$-learnable). It is easy to check that for any $n,m \geq$ there exists some $k \notin \{0,1,2,n,m\}$ such that $\A_k$ contains both $\A_n\restriction_s$ and $\A_m\restriction_s$: this implies the existence of two different copies of $\esse,\esse' \cong \A_k$ where $\esse$ starts as $\A_n\restriction_s$ and $\esse'$ starts as $\A_m\restriction_s$,  such that $\Gamma(\esse\restriction_s)$ is inconsistent with $\Gamma(\esse'\restriction_s)$, a contradiction.

			To conclude the proof of the proposition it suffices to show that $\col \learnreducible{\mathrm{fin}} \Fin$: by \thref{theorem:fincharacterization} this is equivalent to prove $\col \learnreducible{\mathrm{fin}} \equalitynats$.  Suppose $\K=\{\A_i : i \leq n\}$ is co-learnable by a learner $\learnerM$. We define a continuous reduction $\Gamma$ from $\ldofk$ to $=_\nats$ letting, for any $\esse \in \ldofk$ and $i \leq n$, $\Gamma(\esse)= \code{\A_i}$ if and only if for any $j\leq n$ such that $j \neq i$, $\code{\A_j} \in \{\learnerM(\esse\restriction_s):s \in \nats\}$.
		\end{proof}
		
		Among the learning criteria we have considered in this section the one that is missing a syntactic characterization is $\Id$-learnability. Such a characterization was given \cite{bazhenov2024learning} in a slightly different form, as the author considers $E$-learnability as defined in \cite[Definition 3.2]{bazhenov2021learning}.
		\begin{theorem}{\cite[Theorem 4]{bazhenov2024learning}}
			\thlabel{theorem:idcharacterization}
			Let $\K=\{\A_i : i \in \nats\}$: $\K$ is $\Id$-learnable if and only if there is a family of $\sigmainf{1}$-formulas $\Theta=\{\varphi_j : j \in \nats\}$ so that:
			\begin{itemize}
				\item for every $\varphi \in \Theta$ there exists some $\psi \in \Theta$ such that for every $i$, $\A_i \models (\varphi \iff \lnot \psi)$;
				\item if $i \neq j$, then there exists some $\varphi \in \Theta$ such that $\A_i \models \varphi$ and $\A_j \models \lnot \varphi$.
			\end{itemize}
		\end{theorem}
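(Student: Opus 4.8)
The plan is to prove both implications directly in the framework of \thref{definition:elearnability}, using that $\Id\learnequiv{}\Id(\Cantor)$ to assume a witnessing reduction has codomain $\Cantor$, and \thref{lemma:folklore} to write it as $\Phi^X$ for a Turing operator $\Phi$ and oracle $X$.

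For the forward direction I would start from a reduction $\Gamma=\Phi^X\colon\ldofk\to\Cantor$ of isomorphism to $\Id$, set $p_i\defas\Gamma(\A_i)$ (so $\Gamma(\esse)=p_i$ for every copy of $\A_i$, and the $p_i$ are pairwise distinct), and build $\Theta$ by \emph{reading off the output bits}: for $n\in\nats$ and $b\in\{0,1\}$ let $\varphi_{n,b}$ be the countable disjunction, over all finite structures $F$ with domain $\{0,\dots,k\}$ such that $\Phi^X(F)$ is defined, has length $>n$, and has $n$-th bit $b$, of the finitary existential sentence saying ``there is an induced substructure isomorphic to $F$''; this is $\sigmainf{1}$, and I set $\Theta\defas\{\varphi_{n,b}:n\in\nats,\ b\in\{0,1\}\}$. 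The heart of the argument is the equivalence $\A_i\models\varphi_{n,b}\iff p_i(n)=b$. One direction is easy: if $p_i(n)=b$ then, since $p_i=\bigcup_s\Phi^X(\A_i\restriction_s)$, some $\Phi^X(\A_i\restriction_s)$ has length $>n$ with $n$-th bit $p_i(n)=b$, and $\A_i\restriction_s$ is an induced substructure of $\A_i$. For the other direction, given that some induced substructure of $\A_i$ is isomorphic to a witness $F$, I would build a copy $\esse\cong\A_i$ in which the $k+1$ elements realizing $F$ come first, in the order of the isomorphism, so that $\esse\restriction_k$ \emph{is} $F$; then $\Phi^X(\esse\restriction_k)=\Phi^X(F)$, monotonicity of $\Phi^X$ forces $\Gamma(\esse)$ to extend $\Phi^X(F)$, hence $\Gamma(\esse)(n)=b$, and since $\Gamma(\esse)=p_i$ this gives $p_i(n)=b$. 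With this equivalence, the first property of $\Theta$ holds with $\varphi_{n,1-b}$ as the partner of $\varphi_{n,b}$ (as $p_i(n)\in\{0,1\}$), and the second holds by picking $n$ with $p_i(n)\neq p_j(n)$.

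For the converse I would take $\Theta=\{\varphi_j:j\in\nats\}$ with the two properties and, for each $j$, fix via the first property some $\bar\jmath$ with $\A_i\models(\varphi_j\leftrightarrow\lnot\varphi_{\bar\jmath})$ for all $i$. The key observation is that although each $\{\esse\in\ldofk:\esse\models\varphi_j\}$ is only guaranteed to be \emph{open} in $\ldofk$ (being $\sigmainf{1}$-definable), the first property makes it and $\{\esse\in\ldofk:\esse\models\varphi_{\bar\jmath}\}$ partition $\ldofk$, so both become \emph{clopen} in $\ldofk$. I would then define $\Gamma\colon\ldofk\to\Cantor$ by $\Gamma(\esse)(j)=1$ iff $\esse\models\varphi_j$ — operationally, search for a finite $\esse\restriction_s$ witnessing the outer existential of $\varphi_j$ or of $\varphi_{\bar\jmath}$, which for $\esse\in\ldofk$ always eventually happens — so $\Gamma$ is a genuinely continuous map. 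It reduces isomorphism to $\Id$ because copies of a fixed $\A_i$ satisfy the same sentences, while for copies $\esse\cong\A_i$ and $\esse'\cong\A_j$ with $\A_i\not\cong\A_j$ the second property yields a $\varphi_k$ satisfied by exactly one of them, so $\Gamma(\esse)(k)\neq\Gamma(\esse')(k)$.

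The main obstacle is the rearrangement step in the forward direction: turning ``$\A_i$ contains an induced copy of $F$'' into ``$\Gamma(\esse)$ has $n$-th bit $b$''. This works precisely because the learning domain is presentation-free, so any induced substructure of a copy can be pushed to the front and realized there literally as $F$, after which monotonicity of the Turing operator pins the output bit. The remaining points — that the candidate sentences are really $\sigmainf{1}$, and that in the converse direction the first property is what upgrades $\{\esse\models\varphi_j\}$ from open to clopen and hence $\Gamma$ from lower-semicontinuous to continuous — are routine.
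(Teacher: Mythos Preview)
The paper does not supply its own proof of this theorem: it is quoted as \cite[Theorem~4]{bazhenov2024learning} and only stated, so there is nothing in the present paper to compare your argument against.

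That said, your proposal is correct. In the forward direction, the rearrangement step---pushing an induced copy of $F$ to the front of a presentation so that $\esse\restriction_k$ literally equals $F$ and monotonicity of $\Phi^X$ pins the $n$-th output bit---is exactly the right mechanism, and it is the same device the paper uses elsewhere (e.g.\ in the proof of \thref{theorem:sigma1strongantichain}). Your choice of $\Theta=\{\varphi_{n,b}\}$ then makes both bullets immediate from the equivalence $\A_i\models\varphi_{n,b}\Leftrightarrow p_i(n)=b$. In the converse direction you have identified the essential point: the first bullet upgrades each $\{\esse\in\ldofk:\esse\models\varphi_j\}$ from open to clopen in $\ldofk$, so the map $\esse\mapsto(\chi_{\varphi_j}(\esse))_{j\in\nats}$ is genuinely continuous, and the second bullet gives injectivity on isomorphism types. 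The only care needed is the bookkeeping identifying a finite substructure $F$ on $\{0,\dots,k\}$ with the corresponding initial segment of an atomic diagram when writing $\Phi^X(F)$, but this is purely notational and in line with the paper's conventions in Section~\ref{prel:structures}.
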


		\subsection{$\sigmainf{1}$-partial orders}
		So far the notions of learnabilities we considered are linearly ordered with respect to learn-reducibility. As announced in the introduction, we give the first example of a natural equivalence relation such that its associated learnability notion is incomparable with respect to learn-reducibility with $\ex$-learning.

		\begin{theorem}
			\thlabel{theorem:characterization_Erange}
			A family $\K$ is $\erange$-learnable if and only if $\K$ is a $\sigmainf{1}$-partial order.
		\end{theorem}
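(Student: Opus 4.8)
The plan is to prove both directions directly, relying on the elementary observation that a $\sigmainf{1}$ sentence is a countable disjunction of finitary existential sentences $\exists\bar y\,\theta(\bar y)$ with $\theta$ quantifier-free, so that two structures have the same $\sigmainf{1}$-theory if and only if they satisfy the same finitary existential sentences --- equivalently (the language being finite and relational) if and only if they have the same finite substructures up to isomorphism. Fix once and for all an enumeration $\psi_0,\psi_1,\dots$ of all finitary existential sentences of the language.

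For the direction from right to left, assume $\K$ is a $\sigmainf{1}$-partial order. Given $\esse\in\ldofk$, I would define $\Gamma(\esse)$ to be the infinite sequence obtained by running through stages $s=0,1,2,\dots$ and, at stage $s$, appending (in increasing order) every index $k\le s$ not appended yet for which $\esse\restriction_s$ already contains a tuple witnessing $\psi_k$. Since a quantifier-free formula is preserved between a finite substructure and the ambient structure, $\range(\Gamma(\esse))=\{k:\esse\models\psi_k\}$; the output is infinite because every structure with domain $\nats$ satisfies $\psi_k$ for infinitely many $k$ (e.g. ``there exist $n$ pairwise distinct elements''), and $\Gamma$ is continuous since each output symbol is determined by a finite initial segment of the atomic diagram of $\esse$. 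Then $\esse\cong\esse'$ implies $\thsigma{1}{\esse}=\thsigma{1}{\esse'}$, hence $\{k:\esse\models\psi_k\}=\{k:\esse'\models\psi_k\}$, hence $\range(\Gamma(\esse))=\range(\Gamma(\esse'))$; conversely $\range(\Gamma(\esse))=\range(\Gamma(\esse'))$ forces $\thsigma{1}{\esse}=\thsigma{1}{\esse'}$, and writing $\esse\cong\A_a$, $\esse'\cong\A_b$ this gives $\thsigma{1}{\A_a}=\thsigma{1}{\A_b}$, so $a=b$ because $\K$ is a $\sigmainf{1}$-partial order, so $\esse\cong\esse'$. Thus $\Gamma$ witnesses $\erange$-learnability.

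For the direction from left to right I would argue contrapositively. If $\K$ is not a $\sigmainf{1}$-partial order, pick $\A\not\cong\B$ in $\K$ with $\thsigma{1}{\A}=\thsigma{1}{\B}$; since restricting any reduction of $\ldofk$ to $\erange$ to the subdomain $\ldofab$ is again such a reduction, it suffices to show $\{\A,\B\}$ is not $\erange$-learnable. Suppose $\Gamma$ continuously reduces $\ldofab$ to $\erange$. As $\Gamma$ sends each isomorphism class into a single $\erange$-class, all copies of $\A$ share one range $R_\A$ under $\Gamma$, all copies of $\B$ share one range $R_\B$, and $R_\A\neq R_\B$ because $\A\not\cong\B$. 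Choose $v$ in the symmetric difference; by symmetry assume $v\in R_\A\setminus R_\B$. Applying $\Gamma$ to the structure $\A$ itself, $v=\Gamma(\A)(m)$ for some $m$, and by continuity there is a finite $r$ with $\Gamma(\A\restriction_r)(m)=v$. The finite structure $\A\restriction_r$ embeds as an induced substructure into $\A$; as this is expressed by the finitary existential (hence $\sigmainf{1}$) sentence $\exists\bar x\,\delta(\bar x)$, where $\delta$ is the atomic diagram of $\A\restriction_r$, and $\thsigma{1}{\A}=\thsigma{1}{\B}$, it also embeds as an induced substructure into $\B$; composing such an embedding with a bijection of $\nats$ yields a copy $\tau\cong\B$ with domain $\nats$ and $\tau\restriction_r=\A\restriction_r$. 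Then $\Gamma(\tau)$ extends $\Gamma(\tau\restriction_r)=\Gamma(\A\restriction_r)$, so $\Gamma(\tau)(m)=v$ and $v\in\range(\Gamma(\tau))=R_\B$, contradicting $v\notin R_\B$.

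The routine parts are the enumeration of finitary existential sentences and the verification that the $\Gamma$ of the first direction is continuous with the intended range. The conceptual heart --- and the step I expect to require the most care --- is the converse: the right move is to isolate a \emph{single} value $v$ separating $R_\A$ from $R_\B$, use continuity to pin $v$ to a finite substructure of a copy, and then transport that finite substructure from a copy of $\A$ to a copy of $\B$ using the equality of $\sigmainf{1}$-theories, thereby forcing $v$ back into $R_\B$.
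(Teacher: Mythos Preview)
Your proof is correct. The left-to-right direction is essentially the same argument as the paper's: both proceed contrapositively, pick a value $v$ (the paper calls it $z$) in the symmetric difference of the two ranges, use continuity to commit to $v$ on a finite initial segment, and then transport that finite segment into a copy of the other structure using the equality of $\sigmainf{1}$-theories.

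The right-to-left direction, however, is organized differently. The paper picks, for every ordered pair $(i,j)$ with $\thsigma{1}{\A_i}\not\subseteq\thsigma{1}{\A_j}$, a single separating $\sigmainf{1}$ sentence $\varphi_{ij}$, and defines $\Gamma(\esse)(\langle s,i,j\rangle)$ to be $\langle i,j\rangle$ or $\langle 0,0\rangle$ according to whether $\esse\restriction_s\models\varphi_{ij}$; the verification then splits on whether $\thsigma{1}{\A_k}\subseteq\thsigma{1}{\A_i}$ or not. You instead enumerate \emph{all} finitary existential sentences and make $\range(\Gamma(\esse))$ literally equal to (a coding of) the finitary existential theory of $\esse$, so that the reduction property follows in one line from the definition of $\sigmainf{1}$-partial order. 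Your route is more uniform and avoids the case analysis and the oracle that the paper invokes to store the choices of $\varphi_{ij}$; the paper's route, on the other hand, makes explicit exactly which finitely many sentences are responsible for separating any two given structures, which is what is reused later in the proof that $\erange\learnreducible{} E_3$.
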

		\begin{proof}
			Let $\K= \{\A_i:i \in \nats\}$.  For the left-to-right direction, we prove the contrapositive. Suppose that there are $i\neq j$ such that $\thsigma{1}{\A_i}=\thsigma{1}{\A_j}$. Towards a  contradiction, suppose that $\K$ is $\erange$-learnable, and let  $\Gamma$ be a continuous function reducing $\ldofk$ to $\erange$. Without loss of generality, we may assume that there exists $z \in \nats$ such that $z \in \range(\Gamma(\A_i))\setminus \range(\Gamma(\A_j))$. Consider any isomorphic copy $\esse$ of $\A_i$. Let $s$ be such that $z \in \Gamma(\esse\restriction_s)$: if there is no such $s$, then $\range(\Gamma(\esse)) \neq \range(\Gamma(\A_i))$ and hence, $\K$ is not $\erange$-learnable. If at stage $s$ we have that $z \in \Gamma(\esse\restriction_s)$, since by hypothesis for any $t$, $\thsigma{1}{\esse\restriction_t}\subseteq \thsigma{1}{\A_j}$, we can extend $\esse\restriction_s$ to a copy $\mathcal{T}$ of $\A_j$. Again, this shows that $\K$ is not $\erange$-learnable as  $z \in \range(\Gamma(\mathcal{T})) \neq \range(\Gamma(\A_j))$.

			For the opposite direction, suppose that $\K$ is a $\sigmainf{1}$-partial order, i.e., for every $i \neq j$ $\thsigma{1}{\A_i} \neq \thsigma{n}{\A_j}$. If $\thsigma{1}{\A_i} \not\subseteq\thsigma{1}{\A_j}$ then, by definition of $\sigmainf{1}$-partial order, there exists a $\sigmainf{1}$ formula $\varphi_{ij}$ such that $\A_i \models\varphi_{ij}$ and $\A_j \not\models \varphi_{ij}$.
			To prove that $\K$ is $\erange$-learnable, we need to show that $\ldofk$ is reducible to $\erange$ via some continuous $\Gamma$. Our $\Gamma$ will be a Turing operator relative to an oracle that encodes both the information of which $\varphi_{ij}$'s are defined and their definitions. This immediately implies that $\Gamma$ is also continuous (see \thref{lemma:folklore}).  For any $\esse \in \ldofk$ and for any $s,i,j \in \nats$ let
			\[\Gamma(\esse)(\str{s,i,j})= \begin{cases}
				\str{i,j} & \text{if } \varphi_{ij} \text{ is defined and }  \esse\restriction_s \models\varphi_{ij}\\
				\str{0,0} & \text{otherwise.}
			\end{cases}\]
			Notice that for any $\esse$, $\str{0,0} \in \Gamma(\esse)$: for example, $\Gamma(\esse)(s,i,i)=\str{0,0}$ for every $s,i \in \nats$.

			We now prove that $\Gamma$ is the desired reduction. Fix $\A_i \in \K$ and let $\esse \in \ldofk$:
			\begin{itemize}
				\item if $\esse \cong\A_i$, then just applying the definition of $\Gamma$ immediately implies that $\range(\Gamma(\esse)) = \range(\Gamma(\A_i))$ and hence that $\Gamma(\esse) \ \erange \ \Gamma(\A_i)$;
				\item  if $\esse \cong \A_k$ for $k \neq i$, there are two cases: either $\thsigma{1}{\A_k} \subseteq \thsigma{1}{\A_i}$ or $\thsigma{1}{\A_k} \not\subseteq \thsigma{1}{\A_i}$. In the first case since $\K$ is a $\sigmainf{1}$-partial order  we have that $\thsigma{1}{\A_i}\not\subseteq\thsigma{n}{\A_k}$. This implies that $\A_i \models\varphi_{ik}$ and $\A_k \not\models\varphi_{ik}$ and hence, since $\esse \cong \A_k$ we have that $\str{i,k} \in \range(\Gamma(\A_i))\setminus \range(\Gamma(\esse))$. In the second case, we have that $\A_k \models\varphi_{ki}$ and $\A_i \not\models\varphi_{ki}$: hence, since $\esse\cong\A_k$ we have that $\str{k,i} \in \range(\Gamma(\esse))\setminus \range(\Gamma(\A_i))$. In both cases  $\Gamma(\esse) \ \cancel{\erange} \ \Gamma(\A_i)$. 
			\end{itemize}
			This concludes the proof.
		\end{proof}

		The fact that $\thsigma{1}{\omega}=\thsigma{1}{\omega^*}$ together with \thref{theorem:characterization_Erange} leads to the following corollary.
		
		\begin{corollary}
			\thlabel{corollary:omega_omega*_E_range}
			$\{\omega,\omega^*\}$ is not $\erange$-learnable.
		\end{corollary}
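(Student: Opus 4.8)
The plan is to derive the corollary directly from \thref{theorem:characterization_Erange}: by the left-to-right direction of that theorem (in contrapositive form), it suffices to show that $\{\omega,\omega^*\}$ is \emph{not} a $\sigmainf{1}$-partial order, i.e., that $\thsigma{1}{\omega}=\thsigma{1}{\omega^*}$. So the whole argument reduces to establishing this single equality, which is a classical fact about finite linear orders.

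To prove $\thsigma{1}{\omega}=\thsigma{1}{\omega^*}$, I would unwind the definition of a closed $\sigmainf{1}$ formula: it has the form $\bigdoublevee_{i\in I}\exists\overline{y}_i\,\psi_i(\overline{y}_i)$ with each $\psi_i$ quantifier-free in the language $\{\leq\}$. Whether a linear order $L$ satisfies $\psi_i(\overline{a})$ depends only on the restriction of $\leq$ to the (finitely many) coordinates of $\overline{a}$ — that is, on the quantifier-free type realized by $\overline{a}$, which is simply a total preorder on the coordinates (equalities together with a linear order on the resulting classes). Every such pattern is realized by a tuple in $\omega$ if and only if it is realized by a tuple in $\omega^*$: both are infinite linear orders, so a pattern with, say, $m$ distinct classes can always be witnessed by choosing $m$ suitably ordered distinct elements in either structure. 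Hence $\omega$ and $\omega^*$ realize exactly the same quantifier-free types, and therefore satisfy exactly the same $\sigmainf{1}$ sentences. (Equivalently: the isomorphism types of finite substructures of $\omega$ are precisely the finite linear orders, and the same holds for $\omega^*$, and truth of a $\sigmainf{1}$ sentence depends only on this collection of types.)

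Given the equality, the partial order $(\{\thsigma{1}{\omega},\thsigma{1}{\omega^*}\},\subseteq)$ consists of a single element, so $\{\omega,\omega^*\}$ fails the defining condition of a $\sigmainf{1}$-partial order in \thref{definition:sigmainfnposet}; by \thref{theorem:characterization_Erange} it is not $\erange$-learnable. I do not expect any genuine obstacle here: the proof is short, and the only step that needs any care is the verification of $\thsigma{1}{\omega}=\thsigma{1}{\omega^*}$, which is routine once the shape of $\sigmainf{1}$ sentences is spelled out.
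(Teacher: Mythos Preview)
Your proposal is correct and follows exactly the paper's approach: the paper simply states that $\thsigma{1}{\omega}=\thsigma{1}{\omega^*}$ and invokes \thref{theorem:characterization_Erange}. You additionally spell out the (routine) verification of that equality, which the paper leaves implicit.
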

		
		\begin{proposition}
			\thlabel{proposition:Erange_finite_families}
			$\erange \strictlearnreducible{\finitary} E_0$.
		\end{proposition}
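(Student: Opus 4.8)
The plan is to prove the two halves of the strict reducibility separately, routing each through the syntactic characterizations already available. Recall that by \thref{theorem:characterization_Erange} the finite $\erange$-learnable families are exactly the finite $\sigmainf{1}$-partial orders; that the $\ex$-learnable families are exactly the $\sigmainf{2}$-strong antichains and that $\ex\learnequiv{}E_0$; and that by \thref{lemma:easyposet}(c) a finite family is a $\sigmainf{2}$-strong antichain if and only if it is a $\sigmainf{2}$-antichain. Thus it suffices to establish (i) every finite $\sigmainf{1}$-partial order is a $\sigmainf{2}$-antichain, and (ii) there is a finite $E_0$-learnable family that is not a $\sigmainf{1}$-partial order.

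For (i), I would take a finite $\sigmainf{1}$-partial order $\K=\{\A_0,\dots,\A_n\}$ and fix $i\neq j$; the goal is a $\sigmainf{2}$ formula true in $\A_i$ and false in $\A_j$, since applying this to both orderings of each pair yields the $\sigmainf{2}$-antichain condition. As $\thsigma{1}{\A_i}\neq\thsigma{1}{\A_j}$, there are two cases. If $\thsigma{1}{\A_i}\not\subseteq\thsigma{1}{\A_j}$, a witnessing $\sigmainf{1}$ formula already works because $\sigmainf{1}\subseteq\sigmainf{2}$. Otherwise $\thsigma{1}{\A_j}\not\subseteq\thsigma{1}{\A_i}$, so I pick a $\sigmainf{1}$ formula $\psi$ with $\A_j\models\psi$ and $\A_i\not\models\psi$, and use $\lnot\psi$: it is logically equivalent to a $\piinf{1}$ formula, hence is a $\sigmainf{2}$ formula, and it holds in $\A_i$ but fails in $\A_j$. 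This proves (i); combining it with \thref{lemma:easyposet}(c), with the characterization of $\ex$-learnability, and with $\ex\learnequiv{}E_0$, every finite $\erange$-learnable family is $\ex$-learnable and therefore $E_0$-learnable, i.e. $\erange\learnreducible{\finitary}E_0$.

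For (ii), I would use $\K=\{\omega,\omega^*\}$: it is $\ex$-learnable by \thref{proposition:omega_vs_omega*}, hence $E_0$-learnable, while $\thsigma{1}{\omega}=\thsigma{1}{\omega^*}$ means it is not a $\sigmainf{1}$-partial order, so it is not $\erange$-learnable by \thref{corollary:omega_omega*_E_range}. Hence $E_0\notlearnreducible{\finitary}\erange$, and together with the previous paragraph this gives $\erange\strictlearnreducible{\finitary}E_0$.

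I do not expect a genuine obstacle here; the only point requiring care is the logical-complexity bookkeeping in (i), namely that the negation of a $\sigmainf{1}$ formula is, up to logical equivalence, a $\piinf{1}$ formula (push the negation through the infinitary disjunction and the existential quantifiers, then negate a quantifier-free matrix) and that every $\piinf{1}$ formula is trivially a $\sigmainf{2}$ formula, together with recalling that $\ex$- and $E_0$-learnability coincide so that the two sides can be glued through the infinitary characterizations.
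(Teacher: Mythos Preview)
Your proposal is correct. The strictness half is identical to the paper's: both use $\{\omega,\omega^*\}$ via \thref{proposition:omega_vs_omega*} and \thref{corollary:omega_omega*_E_range}. The forward half, however, differs genuinely. The paper does not pass through the $\sigmainf{2}$ characterization at all; instead it builds an explicit $\ex$-learner from the $\sigmainf{1}$-partial order hypothesis: enumerate the finitely many structures so that $\thsigma{1}{\A_i}\subseteq\thsigma{1}{\A_j}$ implies $i<j$, and at each stage conjecture the least $i$ with $\esse\restriction_s\hookrightarrow\A_i$. Your route is purely syntactic (a $\sigmainf{1}$-partial order is a $\sigmainf{2}$-antichain because you can negate a $\sigmainf{1}$ witness, then apply \thref{lemma:easyposet}(c) and the $\ex$/$E_0$ characterizations), and it is arguably cleaner. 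The cost is that you do not produce a concrete learner; the paper's explicit learner is later reused verbatim in \thref{proposition:nonushapeerange}(ii) to show $\erange\learnreducible{\finitary}\nonushape$, since the same learner is visibly non-U-shaped. With your argument that step would need a separate justification.
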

		\begin{proof}
			Let $\K\defas \{\A_i: i < n\}$ with $n>0$ be an $\erange$-learnable family and notice that, by \thref{theorem:characterization_Erange}, $\K$ is a $\sigmainf{1}$-partial order. Without loss of generality, we may assume the following: if $i \neq j$ and $\thsigma{1}{\A_i} \subseteq\thsigma{1}{\A_j}$, then $i < j$. Given $\esse \in \ldofk$, we define $\learnerM$ as follows. At stage $s$
			\[\learnerM(\esse\restriction_s) \defas \begin{cases}
				\code{\A_i} & \text{if } i = \min\{ j < n: \esse\restriction_s \hookrightarrow \A_j\} \\
				? & \text{otherwise.}
			\end{cases}\]
			To show that $\learnerM$ $\ex$-learns $\K$, suppose that $\esse \in \ldofk$ is such that $\esse\cong \A_i$ for some $i <n$. Let $s_0\defas\min\{ s : (\forall j <  i)(\esse\restriction_s \not\hookrightarrow \A_j)\}$: such an $s_0$ exists as $\A_i$ is not embeddable in any $\A_j$ for $j<i$. From stage $s_0$ on, $\learnerM$ will not change its mind, since $\esse \cong \A_i$, and thus for every $s \geq s_0$, $\min\{ j < n: \esse\restriction_{s} \hookrightarrow \A_j\}= i$.
			
			For strictness, it suffices to combine the facts that $E_0$-learnability coincides with $\ex$-learning (\cite[Theorem 3.1]{bazhenov2021learning}) and $\{\omega,\omega^*\}$ is $\ex$-learnable (\thref{proposition:omega_vs_omega*}) but not $\erange$-learnable (Corollary~\thref{corollary:omega_omega*_E_range}).
		\end{proof}

		\begin{theorem}
			\thlabel{theorem: Erange_incomparable_E0}
			$\erange \strictlearnincomparable{} E_0$.
		\end{theorem}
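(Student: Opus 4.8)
The plan is to establish the two non-reductions $E_0 \notlearnreducible{} \erange$ and $\erange \notlearnreducible{} E_0$.

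The direction $E_0 \notlearnreducible{} \erange$ is immediate from what is already available: the family $\{\omega,\omega^*\}$ is $\ex$-learnable by \thref{proposition:omega_vs_omega*}, hence $E_0$-learnable since $\ex$-learning and $E_0$-learning coincide, whereas \thref{corollary:omega_omega*_E_range} says it is not $\erange$-learnable.

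For $\erange \notlearnreducible{} E_0$ we need an $\erange$-learnable family that is not $E_0$-learnable; by \thref{proposition:Erange_finite_families} such a family must be infinite. I propose $\K \defas \{\mathsf{R}_n \oplus \mathsf{I} : n \geq 2\} \cup \{\mathsf{R} \oplus \mathsf{I}\}$. That $\K$ is $\erange$-learnable follows from \thref{theorem:characterization_Erange} once we check it is a $\sigmainf{1}$-partial order: for $k \geq 2$, the existential (hence $\sigmainf{1}$) sentence $\pi_k$ asserting ``there exist $k$ distinct vertices forming a path'' holds in $\mathsf{R}_n \oplus \mathsf{I}$ exactly when $n \geq k$ and holds in $\mathsf{R} \oplus \mathsf{I}$, so for any two distinct members of $\K$ some $\pi_k$ lies in the $\sigmainf{1}$-theory of exactly one of the two; hence all these theories are pairwise distinct.

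The crux, and the step I expect to be the main obstacle, is that $\K$ is not $\ex$-learnable (equivalently, not $E_0$-learnable) --- the analogue in our framework of Gold's observation that a superfinite family is not $\ex$-learnable. Assume for a contradiction that $\learnerM$ $\ex$-learns $\K$. I would build, by an alternating stage construction, a single copy $\esse$ of a member of $\K$ on which $\learnerM$ fails, keeping throughout that the revealed part $\esse\restriction_s$ is the disjoint union of a single (possibly trivial) path and finitely many isolated vertices. The construction alternates two kinds of phase. In an $\mathsf{R}\oplus\mathsf{I}$-phase, keep extending $\esse$ by alternately adjoining a new vertex to the end of the current path and a new isolated vertex, waiting for $\learnerM$ to output $\code{\mathsf{R}\oplus\mathsf{I}}$; if this never happens the copy is completed to a genuine copy of $\mathsf{R}\oplus\mathsf{I}\in\ldofk$ on which $\learnerM$ never converges to the correct conjecture, a contradiction. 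Otherwise switch to an $\mathsf{R}_n\oplus\mathsf{I}$-phase for a fresh $n$ larger than the current path length: extend the current path to exactly $n$ vertices --- legitimate, since any disjoint union of a path of at most $n$ vertices with isolated vertices embeds into $\mathsf{R}_n\oplus\mathsf{I}$ as an initial segment --- and thereafter adjoin only new isolated vertices, waiting for $\learnerM$ to output $\code{\mathsf{R}_n\oplus\mathsf{I}}$; if this never happens the copy is completed to a copy of $\mathsf{R}_n\oplus\mathsf{I}\in\ldofk$ on which $\learnerM$ fails. Then return to an $\mathsf{R}\oplus\mathsf{I}$-phase, and iterate. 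If no phase ever runs forever, all phases are finite and the construction yields one copy $\esse$ in which the path grows without bound and infinitely many isolated vertices are adjoined, so $\esse\cong\mathsf{R}\oplus\mathsf{I}$; but at the end of each $\mathsf{R}\oplus\mathsf{I}$-phase $\learnerM$ has output $\code{\mathsf{R}\oplus\mathsf{I}}$ and at the end of each $\mathsf{R}_n\oplus\mathsf{I}$-phase it has output a conjecture different from $\code{\mathsf{R}\oplus\mathsf{I}}$, so $\learnerM$ changes its mind infinitely often on $\esse$ and does not converge --- the final contradiction. The points needing care are that at every switch the currently revealed finite graph really does embed as a literal initial segment of the new target (precisely the asymmetry between the finite rays $\mathsf{R}_n$, which embed into $\mathsf{R}$, and the finite cycles used elsewhere in the paper, which do not), and the routine bookkeeping ensuring every natural number is eventually used as a vertex, so that $\esse$ is a structure with domain $\nats$. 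Together with the first direction this gives $\erange \strictlearnincomparable{} E_0$.
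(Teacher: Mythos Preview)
Your proposal is correct and follows essentially the same approach as the paper: the same witness $\{\omega,\omega^*\}$ for $E_0\notlearnreducible{}\erange$, and the same family $\K=\{\mathsf{R}_n\oplus\mathsf{I}:n\geq 2\}\cup\{\mathsf{R}\oplus\mathsf{I}\}$ for $\erange\notlearnreducible{}E_0$, with $\erange$-learnability via \thref{theorem:characterization_Erange} and non-$\ex$-learnability via a Gold-style diagonalization. The only cosmetic difference is that the paper packages the diagonalization as a single step-by-step process using ``expansionary stages'' (extend the path by one vertex whenever $\learnerM$ guesses the current finite ray, otherwise add an isolated vertex) rather than your alternating phases, but the underlying argument is the same.
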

		\begin{proof}
			The fact that $E_0 \not\learnreducible{} \erange$ follows from \thref{proposition:Erange_finite_families}.
			
			For the opposite direction, let $\K\defas \{\mathsf{R}_n \oplus \mathsf{I} : n \geq 2\} \cup \{\mathsf{R} \oplus \mathsf{I}\}$ and suppose that $\K$ is $\ex$-learnable by some learner $\learnerM$. We define $\esse \in \ldofk$ that $\learnerM$ fails to $\ex$-learn as follows. At every stage $s$, $\esse\restriction_s$ is such that $\esse\restriction_s\cong \mathsf{R}_{n_s} \oplus \mathsf{I}_{t_s}$ for some $n_s, t_s \leq s$ and we say that $2s\geq 2$ is an \emph{expansionary} stage if $\learnerM(\esse\restriction_{2s})=\code{\mathsf{R}_{n_{2s}}}$.
			The fact that we do not consider stages less than $2$ and that we just take care of even stages is just technical and needed for the definition of $\esse$: indeed, we want to ensure that $\esse$ is in $\ldofk$, and hence we use stages less than $2$ to add a copy of $\mathsf{R}_2$ in $\esse$ and odd stages to add pairwise incomparable elements, without caring of what $\learnerM$ does in these stages.
			We define $\esse$ as follows: Let $\esse\restriction_2 \cong \mathsf{R}_2 \oplus  \mathsf{I}_1$, and suppose that we have defined $\esse\restriction_{2s} \cong \mathsf{R}_{n_{2s}} \oplus \mathsf{I}_{t_{2s}}$ for $n_{2s},t_{2s} \leq 2s$ with $s \geq 1$. At stage $2s+1$ just let $\esse\restriction_{2s+1} \cong \mathsf{R}_{n_{2s}} \oplus \mathsf{I}_{{2s}+1}$. At stage $2s+2$:
			\begin{itemize}
				\item if $2s$ is an expansionary stage, let  $\esse\restriction_{2s+2} \cong \mathsf{R}_{n_{2s}+1} \oplus \mathsf{I}_{t_{2s}+1}$;
				\item if $2s$ is not an expansionary stage (i.e., $\learnerM(\esse\restriction_{2s}) \neq \code{\mathsf{R}_{n_{2s}}}$), let $\esse\restriction_{2s+2} \cong \mathsf{R}_{n_{2s}}\oplus \mathsf{I}_{t_{2s}+2}$;
			\end{itemize}
			Notice that:
			\begin{itemize}
				\item if there are infinitely many expansionary stages then $\esse\cong \mathsf{R} \oplus \mathsf{I}$;
				\item if there are only finitely many expansionary stages, say $n$, then $\esse \cong \mathsf{R}_{n+2} \oplus \mathsf{I}$. 
			\end{itemize}
			We claim that $\learnerM$ fails to $\ex$-learn $\esse$.
			\begin{itemize}
				\item Suppose that there are infinitely many expansionary stages (i.e., suppose that $\esse \cong \mathsf{R} \oplus \mathsf{I}$). Notice that for any two expansionary stages $2s' > 2s$, we have that $\esse\restriction_{2s} \cong \mathsf{R}_{n_{2s}} \oplus \mathsf{I}_{t_{2s}}$ and $\esse\restriction_{2s'} \cong \mathsf{R}_{n_{2s'}} \oplus \mathsf{I}_{t_{2s'}}$ for $n_{2s'}>n_{2s}$ and $t_{2s'} \geq t_{2s}$. We claim that $\learnerM(\esse\restriction_{2s'}) \neq \learnerM(\esse\restriction_{2s})$: this follows easily from the definition of expansionary stage, i.e., we have that $\learnerM(\esse\restriction_{2s})=\code{\mathsf{R}_{2s} \oplus \mathsf{I}}$ while   $\learnerM(\esse\restriction_{2s'})=\code{\mathsf{R}_{2s'}\oplus \mathsf{I}}$. Since by construction there are infinitely many expansionary stages, what we have just shown proves that $\learnerM$ change its mind infinitely many times, i.e., $\learnerM$ fails to $\ex$-learn $\esse$.
				\item Suppose that there are $n$ many expansionary stages (i.e., $\esse \cong \mathsf{R}_{n+2}\oplus \mathsf{I}$). Let $2s$ be the greatest expansionary stage: We claim that for all $2s' \geq 2s$, $\learnerM(\esse\restriction_{2s'}) \neq \code{\mathsf{R}_{n+2} \oplus \mathsf{I}}$  i.e., $\learnerM$ fails to $\ex$-learn $\esse$. To prove this, suppose that there is some stage $2s'>2s$ such that $\learnerM(\esse\restriction_{2s'}) = \code{\mathsf{R}_{n+2}\oplus \mathsf{I}}$: then, by definition, this would have been an expansionary stage, implying that $\mathsf{R}_{n+3} \hookrightarrow\esse\restriction_{2s'}$ and contradicting the fact that $\esse \cong \mathsf{R}_{n+2} \oplus \mathsf{I}$.
				
			\end{itemize}
			
			To conclude the proof, we need to show that $\K$ is $\erange$-learnable: it is easy to notice that $\K$ is a $\sigmainf{1}$-partial order and, by \thref{theorem:characterization_Erange}, this concludes the proof.
		\end{proof}
		
		To finish the picture around $\erange$ we give the following proposition.
		
		\begin{proposition}
			$\erange\strictlearnreducible{}{} E_3$.
		\end{proposition}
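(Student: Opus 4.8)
The plan is to establish the two halves $\erange \learnreducible{} E_3$ and $E_3 \notlearnreducible{} \erange$. For the reduction, let $\K = \{\A_i : i \in \nats\}$ be $\erange$-learnable; by \thref{theorem:characterization_Erange} it is a $\sigmainf{1}$-partial order, so for each pair $i \neq j$ with $\thsigma{1}{\A_i} \not\subseteq \thsigma{1}{\A_j}$ I would fix a $\sigmainf{1}$ sentence $\varphi_{ij}$ with $\A_i \models \varphi_{ij}$ and $\A_j \not\models \varphi_{ij}$. Then I would define $\function{\Gamma}{\ldofk}{\nats^{\nats \times \nats}}$ so that the $\str{i,j}$-th column of $\Gamma(\esse)$ takes value $1$ at stage $s$ exactly when $i \neq j$, $\varphi_{ij}$ is defined, and $\esse\restriction_s \models \varphi_{ij}$, and takes value $0$ otherwise (in particular all remaining columns are identically $0$). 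Exactly as in the proof of \thref{theorem:characterization_Erange}, this $\Gamma$ can be presented as a Turing operator relative to an oracle coding the formulas $\varphi_{ij}$ (equivalently, their satisfaction on finite structures), hence is continuous by \thref{lemma:folklore}.

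The verification would rest on the upward persistence of $\sigmainf{1}$ (i.e.\ infinitary existential) sentences from finite substructures: for any $\sigmainf{1}$ sentence $\psi$ and any $\esse \in \ldofk$, $\esse \models \psi$ iff $\esse\restriction_s \models \psi$ for some $s$, and then $\esse\restriction_t \models \psi$ for all $t \geq s$. Consequently every column of $\Gamma(\esse)$ is eventually constant, with eventual value $1$ precisely when $\varphi_{ij}$ is defined and $\esse \models \varphi_{ij}$; since $\esse \cong \A_k$ for a unique $k$, this eventual value depends only on $k$. If $\esse \cong \A_i$, then $\Gamma(\esse)$ and $\Gamma(\A_i)$ have the same eventual value on every column, so $\Gamma(\esse) \ E_3 \ \Gamma(\A_i)$. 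If $\esse \cong \A_k$ with $k \neq i$, then since $\K$ is a $\sigmainf{1}$-partial order either $\thsigma{1}{\A_i} \not\subseteq \thsigma{1}{\A_k}$ — in which case column $\str{i,k}$ stabilizes to $1$ in $\Gamma(\A_i)$ and to $0$ in $\Gamma(\esse)$ — or $\thsigma{1}{\A_k} \not\subseteq \thsigma{1}{\A_i}$ — in which case column $\str{k,i}$ separates them in the opposite direction; either way $\Gamma(\esse) \ \cancel{E_3} \ \Gamma(\A_i)$. Hence $\Gamma$ witnesses that $\K$ is $E_3$-learnable.

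For strictness I would use $\{\omega,\omega^*\}$: it is $\ex$-learnable by \thref{proposition:omega_vs_omega*}, hence $E_0$-learnable by \cite[Theorem 3.1]{bazhenov2021learning}, hence $E_3$-learnable since $E_0 \learnreducible{} E_3$ by \thref{theorem:prevwork}; on the other hand $\thsigma{1}{\omega} = \thsigma{1}{\omega^*}$ shows that $\{\omega,\omega^*\}$ is not a $\sigmainf{1}$-partial order, so by \thref{theorem:characterization_Erange} it is not $\erange$-learnable (this is exactly \thref{corollary:omega_omega*_E_range}). Thus $E_3 \notlearnreducible{} \erange$, giving $\erange \strictlearnreducible{} E_3$. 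I expect the only real content to be the middle paragraph, and in particular the observation that the stages at which distinct columns of $\Gamma(\esse)$ stabilize need not be uniformly bounded — which is precisely why $E_3$, rather than $E_0$, is the correct target (consistent with $\erange \strictlearnincomparable{} E_0$); the oracle and continuity bookkeeping is routine and mirrors that of \thref{theorem:characterization_Erange}.
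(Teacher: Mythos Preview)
Your proposal is correct and follows essentially the same approach as the paper: the reduction $\Gamma$ you define is identical to the paper's (columns indexed by $\str{i,j}$, value $1$ once $\esse\restriction_s \models \varphi_{ij}$ and $0$ otherwise, continuity via an oracle coding the $\varphi_{ij}$'s), and the verification is the same case analysis. For strictness the paper invokes \thref{proposition:Erange_finite_families} together with $E_3 \learnequiv{\finitary} E_0$, but unwinding that reference yields precisely your argument via $\{\omega,\omega^*\}$, so the difference is only in packaging.
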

		\begin{proof}
			Let $\K=\{\A_i:i \in \nats\}$ be an $\erange$-learnable family. By \thref{theorem:characterization_Erange}, we have that $\K$ is a $\sigmainf{1}$-partial order.
			As in the proof of \thref{theorem:characterization_Erange}, let $\varphi_{ij}$ be a $\sigmainf{1}$-formula such that $\A_i \models\varphi_{ij}$ and $\A_j \not\models\varphi_{ij}$, if such a formula exists. 
			
			Our reduction $\Gamma$ from $\erange$ to $E_3$ will be a Turing operator relative to an oracle that encodes both the information of which $\varphi_{ij}$'s are defined and their definitions. This immediately implies that it is also continuous (see \thref{lemma:folklore}). Given $\esse \in \ldofk$, for every $i, j$ such that either $i=j$ or $i \neq j$ and $\thsigma{1}{\A_i}\subseteq\thsigma{1}{\A_j}$ let $\Gamma(\esse)^{[\str{i,j}]}(s)=0$ for every $s \in \nats$. For every $i \neq j$ such that $\thsigma{1}{\A_i}\not\subseteq \thsigma{1}{\A_j}$, at stage $s$, let
			\[\Gamma(\esse)^{[\str{i,j}]}(s) =\begin{cases}
				0 & \text{if } \esse\restriction_s \not\models\varphi_{ij}\\
				1 & \text{otherwise}
			\end{cases}\]
			Now the proof follows the schema of  \thref{theorem:characterization_Erange}.  Fix some $\A_i \in \K$: 
			\begin{itemize}
				\item if $\esse \cong \A_i$, then for every $j \neq k$ such that $\varphi_{kj}$ is defined and $\A_i \models\varphi_{kj}$, \\ $\Gamma(\esse)^{[\str{k,j}]} \ E_0 \ 1^\nats \ E_0 \ \Gamma(\A_i)^{[\str{k,j}]}$ while all the other columns are of the form $0^\nats$. Hence, for every $n$, $\Gamma(\esse)^{[n]} \ E_0 \ \Gamma(\A_i)^{[n]}$, i.e., $\Gamma(\esse) \ E_3 \ \Gamma(\A_i)$.
				
				\item  if $\esse \cong \A_k$ for some $k \neq i$ either $\thsigma{1}{\A_k}\subseteq\thsigma{1}{\A_i}$ or $\thsigma{1}{\A_k} \not\subseteq \thsigma{1}{\A_i}$. In the first case, since $\K$ is a $\sigmainf{1}$-partial order, we have that $\thsigma{1}{\A_i}\not\subseteq \thsigma{1}{\A_k}$. This implies that $\A_i \models \varphi_{ik}$ and $\A_k \not\models\varphi_{ik}$: since $\esse \cong \A_k$ we have that $\Gamma(\esse)^{[\str{i,k}]}=0^\nats$ while $\Gamma(\A_i)^{[\str{i,k}]} \ E_0\ 1^\nats$, i.e.\ $\Gamma(\esse) \ \centernot{E_3} \ \Gamma(\A_i)$. In the second case, we have that 
				$\A_k \models\varphi_{ki}$ and $\A_i \not\models\varphi_{ki} \A_i$: since $\esse \cong \A_k$, we have that  
				$\Gamma(\esse)^{[\str{k,i}]} \ E_0\ 1^\nats$ while $\Gamma(\A_i)^{[\str{k,i}]}=0^\nats$, i.e., $\Gamma(\esse) \ \centernot{E_3} \ \Gamma(\A_i)$.
			\end{itemize}
			This means that $\esse \cong \A_i$ if and only if $\Gamma(\esse) \ {E_3} \ \Gamma(\A_i)$, i.e., the family $\K$ is $E_3$-learnable. 
			
			The strictness of the reduction now follows from \thref{proposition:Erange_finite_families} and the fact that $E_3 \learnequiv{\finitary} E_0$ (\cite[Theorem 5.1]{bazhenov2021learning}). This concludes the proof.
		\end{proof}
		\subsection{non-U-shaped and decisive learning}
		
		We first give the definitions for $\nonushape$-learning and $\decisive$-learning.
		\begin{definition}
			\thlabel{definition:nonushapedec}
			Let $\K$ be a  family of structures.
			\begin{itemize}
				\item  A learner $\learnerM$ $\nonushape$-learns $\K$ if,  it $\ex$-learns $\K$ and for every $\esse\in\ldofk$, it never abandons the correct conjecture. That is, given $n_0\defas \min \{n : \learnerM(\esse\restriction_n)=\ulcorner \A \urcorner\}$ 
				\[\esse\cong \A \implies (\forall m>n_0)(\learnerM(\esse\restriction_m)=\ulcorner \A \urcorner). \]
				
				\item    A learner $\learnerM$ $\decisive$-learns $\K$ if, it $\ex$-learns $\K$ and for every $\esse\in\ldofk$, it never returns to a previously abandoned conjecture. That is, for every $\ulcorner \A \urcorner$ such that $\A \in \K$ and for every $n$
				\[\learnerM(\esse\restriction_n)=\ulcorner \A \urcorner \land \learnerM(\esse\restriction_{n+1}) \neq \ulcorner \A \urcorner \implies (\forall m>n)(\learnerM(\esse\restriction_{m}) \neq \ulcorner \A \urcorner).\]
			\end{itemize}
		\end{definition}
		
		The next proposition shows that, in our framework, the two paradigms introduced above actually coincide.
		
		\begin{proposition}
			\thlabel{proposition:nonushapedecequiv}
			A family of structures $\K$ is $\nonushape$-learnable if and only if it is $\decisive$-learnable.
		\end{proposition}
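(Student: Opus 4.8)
The plan is to prove the two directions separately. From $\decisive$-learnable to $\nonushape$-learnable no change of learner is needed: if $\learnerM$ $\decisive$-learns $\K$ and $\esse\in\ldofk$ is a copy of $\A$, set $n_0\defas\min\{n:\learnerM(\esse\restriction_n)=\code{\A}\}$, which exists since $\learnerM$ $\ex$-learns $\K$. By induction on $m\geq n_0$: if $\learnerM(\esse\restriction_m)=\code{\A}$ while $\learnerM(\esse\restriction_{m+1})\neq\code{\A}$, then decisiveness forces $\learnerM(\esse\restriction_k)\neq\code{\A}$ for all $k>m$, contradicting convergence to $\code{\A}$; so $\learnerM(\esse\restriction_{m+1})=\code{\A}$. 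Hence $\learnerM(\esse\restriction_m)=\code{\A}$ for all $m\geq n_0$, i.e.\ $\learnerM$ is non-U-shaped in the sense of \thref{definition:nonushapedec}.

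For the converse, fix $\learnerM$ that $\nonushape$-learns $\K$. The only obstruction to $\learnerM$ being decisive is that it may oscillate among \emph{incorrect} conjectures before reaching the correct one (a $U$-shape on the correct conjecture is forbidden outright). I would build $\learnerM'$ by simulating $\learnerM$ while maintaining a monotonically growing ``ban list'' $F_s$ of conjectures, with $F_0\defas\emptyset$, defined by simultaneous recursion: put $\learnerM'(\esse\restriction_s)\defas\learnerM(\esse\restriction_s)$ if $\learnerM(\esse\restriction_s)\notin F_s\cup\{?\}$ and $\learnerM'(\esse\restriction_s)\defas ?$ otherwise; and $F_{s+1}\defas F_s\cup\{\learnerM'(\esse\restriction_s)\}$ if $\learnerM'(\esse\restriction_s)\neq ?$ and $\learnerM(\esse\restriction_{s+1})\neq\learnerM'(\esse\restriction_s)$, else $F_{s+1}\defas F_s$. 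So the moment $\learnerM'$ abandons a genuine conjecture it bans it for good. Decisiveness of $\learnerM'$ is then immediate on \emph{every} input: if $\learnerM'(\esse\restriction_n)=\code{\A}$ but $\learnerM'(\esse\restriction_{n+1})\neq\code{\A}$, unravelling the definitions forces $\learnerM(\esse\restriction_{n+1})\neq\code{\A}=\learnerM'(\esse\restriction_n)$, whence $\code{\A}\in F_{n+1}$, and since the $F_s$ increase we get $\learnerM'(\esse\restriction_m)\neq\code{\A}$ for all $m>n$.

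The core of the proof is checking that $\learnerM'$ still $\ex$-learns $\K$, and this is exactly where non-U-shapedness is used. Fix $\esse\in\ldofk$ with $\esse\cong\A$ and again let $n_0\defas\min\{n:\learnerM(\esse\restriction_n)=\code{\A}\}$. Since $\learnerM$ is non-U-shaped, $\learnerM$ outputs $\code{\A}$ at no stage below $n_0$, hence neither does $\learnerM'$, hence $\code{\A}\notin F_s$ for all $s\leq n_0$; and since $\learnerM(\esse\restriction_m)=\code{\A}$ for all $m\geq n_0$, an easy induction gives $F_{m+1}=F_m$ and $\learnerM'(\esse\restriction_m)=\code{\A}$ for all $m\geq n_0$. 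Thus $\learnerM'$ converges to $\code{\A}$, and together with the decisiveness already shown this proves that $\learnerM'$ $\decisive$-learns $\K$.

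The step I expect to be the main (and really the only) delicate point is recognising that the irrevocable banning of abandoned conjectures cannot block convergence, and the reason it cannot is precisely the non-U-shaped hypothesis: it guarantees that $\code{\A}$ never appears in the ``pre-$n_0$ junk'', so it is never banned before $\learnerM'$ must commit to it. For a learner that merely $\ex$-learns $\K$ the correct conjecture could occur in that junk and the construction would break, so this observation carries the whole argument.
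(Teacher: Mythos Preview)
Your proof is correct and follows essentially the same approach as the paper. Both arguments construct the decisive learner by maintaining a set of banned conjectures and relying on non-U-shapedness to ensure the correct conjecture never enters that set; the only cosmetic difference is that when encountering a banned conjecture the paper's learner repeats its previous output while yours outputs~$?$. (One minor wording issue: the fact that $\learnerM$ outputs $\code{\A}$ at no stage below $n_0$ is just the definition of $n_0$ as a minimum; non-U-shapedness is what gives $\learnerM(\esse\restriction_m)=\code{\A}$ for all $m\geq n_0$.)
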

		\begin{proof}
			The right-to-left direction is trivial. For the opposite direction, suppose that $\K$ is $\nonushape$-learnable by some learner $\learnerM_U$. We define a learner $\learnerM_D$ that $\decisive$-learns $\K$ as follows. Let $\esse \in \ldofk$ and at stage $0$, let $\learnerM_D(\esse\restriction_0)=\learnerM_U(\esse\restriction_0)$. At stage $s+1$, let $\learnerM_D(\esse\restriction_{s+1}) \defas$
			\[ 
			\begin{cases}
				\learnerM_U(\esse\restriction_{s+1}) & \text{if } \learnerM_U(\esse\restriction_{s+1})=\learnerM_D(\esse\restriction_{s}) \text{ or}\\
				& \learnerM_U(\esse\restriction_{s+1}) \neq \learnerM_U(\esse\restriction_{s}) \land (\forall i<s+1)(\learnerM_U(\esse\restriction_i) \neq \learnerM_U(\esse\restriction_{s+1}))\\
				\learnerM_D(\esse\restriction_{s})  & \text{if } \learnerM_U(\esse\restriction_{s+1}) \neq \learnerM_D(\esse\restriction_{s}) \land (\exists i<s+1)(\learnerM_U(\esse\restriction_i) =\learnerM_U(\esse\restriction_{s+1})).\\
			\end{cases}\]

			The definition of $\learnerM_D$ ensures that an abandoned hypothesis is never outputted again, and the fact that $\learnerM_U$ $\nonushape$-learns $\K$ ensures that $\learnerM_D$ eventually stabilizes to the correct conjecture.
		\end{proof}
		
		We now give the promised characterization of $\nonushape$-learnability (and hence, by \thref{proposition:nonushapedecequiv} of $\decisive$-learnability). We first need the following definition.
		
		\begin{definition}
			\thlabel{definition:solidposets}
			Let $\K$ be a  $\sigmainf{1}$-partial order. We say that $\K$ is a \emph{solid $\sigmainf{1}$-partial order} if, for any nonempty $\A \in \K$
			\[\thsigma{1}{\A} \setminus \bigcup \{\thsigma{1}{\B}: \B \in \K \land \thsigma{1}{\B}  \subset \thsigma{1}{\A} \} \neq \emptyset.\]
			
		\end{definition}
		Informally, in a solid  $\sigmainf{1}$-partial order each structure $\A$ has formula that separates $\A$ from its proper lower cone (with respect to the $\sigmainf{1}$-theories).
		\begin{theorem}
			\thlabel{theorem:solidposets}
			A family of structures $\K$ is $\nonushape$-learnable if and only if $\K$ is a solid $\sigmainf{1}$-partial order.
		\end{theorem}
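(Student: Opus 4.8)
The plan is to prove the two implications separately, using \thref{proposition:nonushapedecequiv} throughout so that "$\nonushape$-learnable" and "$\decisive$-learnable" may be used interchangeably. I will also lean on two standard facts about a $\sigmainf{1}$ sentence $\varphi$, both immediate from the definitions together with the upward absoluteness of quantifier-free formulas: (i) if $\B$ is a substructure of $\B'$ and $\B\models\varphi$, then $\B'\models\varphi$; and (ii) $\thsigma{1}{\A}\subseteq\thsigma{1}{\B}$ if and only if every finite substructure of $\A$ embeds into $\B$ (since the quantifier-free diagram of a witnessing tuple is itself expressible by a $\sigmainf{1}$ sentence). In particular, on $\esse\cong\A$ the statement $\esse\restriction_s\hookrightarrow\A_j$ is anti-monotone in $s$, and $\esse\restriction_s\models\varphi$ is monotone in $s$ and implies $\A\models\varphi$.

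For the direction from solid $\sigmainf{1}$-partial orders to $\nonushape$-learnability, write $\K=\{\A_i:i\in\nats\}$ and, using solidity, fix for each $i$ a $\sigmainf{1}$ sentence $\psi_i$ with $\A_i\models\psi_i$ and $\A_j\not\models\psi_i$ whenever $\thsigma{1}{\A_j}\subsetneq\thsigma{1}{\A_i}$ (for $\subseteq$-minimal structures any sentence of the theory works). I would then define a \emph{conservative} learner $\learnerM$ by recursion on the stage: it keeps its current conjecture $\code{\A_j}$ as long as that conjecture is not refuted by the data, i.e.\ $\esse\restriction_s\hookrightarrow\A_j$ still holds, and otherwise (current conjecture refuted, or $?$) it switches to $\code{\A_{j^\ast}}$ for the least $j^\ast$ with $\esse\restriction_s\hookrightarrow\A_{j^\ast}$ and $\esse\restriction_s\models\psi_{j^\ast}$, outputting $?$ if there is no such $j^\ast$ (e.g.\ at the start). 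On $\esse\cong\A_i$ the conjecture $\code{\A_i}$ is never refuted (as $\esse\restriction_s\hookrightarrow\A_i$ for all $s$), so once it is output it is kept forever; this already gives non-U-shapedness as soon as $\learnerM$ is shown to $\ex$-learn $\esse$. For that, the key claim is: \emph{if $\learnerM$ outputs $\code{\A_h}$ and never afterwards refutes it, then $h=i$}. Indeed "never refuted" gives $\thsigma{1}{\A_i}\subseteq\thsigma{1}{\A_h}$ by (ii), while at the stage $\code{\A_h}$ was first picked we had $\esse\restriction_s\models\psi_h$, hence $\A_i\models\psi_h$; if $h\ne i$ the $\sigmainf{1}$-partial-order hypothesis forces $\thsigma{1}{\A_i}\subsetneq\thsigma{1}{\A_h}$, contradicting the choice of $\psi_h$. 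Convergence then follows: each conjecture $\code{\A_j}$ is left only upon refutation and, once refuted, can never be picked again (picking requires $\esse\restriction_s\hookrightarrow\A_j$, which fails after refutation), so the outputs are pairwise distinct conjectures separated by switches; past the stage at which a finite witness for $\A_i\models\psi_i$ appears, $i$ is always an eligible candidate, so every subsequent pick has index $\le i$, hence finitely many switches occur and the conjecture stabilizes; by the key claim the stable value is $\code{\A_i}$.

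For the converse, let $\learnerM$ be a decisive learner for $\K$. The main step is to package the two ways solidity can fail into one property: if $\K$ is not a solid $\sigmainf{1}$-partial order, then some $\A_i$ satisfies $(\star)$: for every finite substructure $F$ of $\A_i$ there is $j\ne i$ with $F\hookrightarrow\A_j$ and $\thsigma{1}{\A_j}\subseteq\thsigma{1}{\A_i}$. Indeed, if $\thsigma{1}{\A_i}=\thsigma{1}{\A_j}$ for some $i\ne j$, that $j$ works for every $F$; if instead $\K$ is a $\sigmainf{1}$-partial order but $\thsigma{1}{\A_i}\subseteq\bigcup\{\thsigma{1}{\A_j}:\thsigma{1}{\A_j}\subsetneq\thsigma{1}{\A_i}\}$, then for each finite $F\hookrightarrow\A_i$ the $\sigmainf{1}$ sentence "contains a copy of $F$" lies in some $\thsigma{1}{\A_j}$ with $\thsigma{1}{\A_j}\subsetneq\thsigma{1}{\A_i}$, witnessing $(\star)$. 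Assuming $(\star)$ for $\A_i$, I would fool $\learnerM$ in three phases: build a copy of $\A_i$ until $\learnerM$ outputs $\code{\A_i}$ (forced, else $\ex$-learning fails on a genuine copy of $\A_i$), say at stage $n_1$ with data $F=\esse\restriction_{n_1}$; pick $j\ne i$ as in $(\star)$; extend $F$ to a copy of $\A_j$ until $\learnerM$ outputs $\code{\A_j}$ (again forced), say at stage $n_2$ with data $F'=\esse\restriction_{n_2}$, which satisfies $F'\hookrightarrow\A_i$ since $\thsigma{1}{\A_j}\subseteq\thsigma{1}{\A_i}$; finally extend $F'$ to a full copy of $\A_i$. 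Then $\esse\cong\A_i$, yet $\learnerM$ conjectured $\code{\A_i}$ at $n_1$, abandoned it somewhere in $[n_1,n_2)$, and must return to $\code{\A_i}$ in the limit --- contradicting decisiveness, hence $\nonushape$-learnability via \thref{proposition:nonushapedecequiv}.

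The genuine difficulty lies in the first direction. Non-U-shapedness essentially forces the learner to be conservative --- to drop a conjecture only when the atomic data literally contradicts it ($\esse\restriction_s\not\hookrightarrow\A_j$) --- and the real danger is that such a learner locks permanently onto a wrong but never-contradicted conjecture; solidity, through the separating sentences $\psi_i$ and the key claim that a never-refuted conjecture is correct, is exactly what prevents this, so that is where the hypothesis is used. The remaining points (the absoluteness facts, that the recursively defined $\learnerM$ is a genuine learner since each value depends only on $\esse\restriction_s$ and no effectivity is needed, and the forcing of conjectures in the fooling argument) are routine.
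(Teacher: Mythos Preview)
Your proposal is correct and follows essentially the same approach as the paper. Both directions match closely: your conservative learner (keep the current conjecture while $\esse\restriction_s\hookrightarrow\A_j$, otherwise pick the least index $j^\ast$ with $\esse\restriction_s\hookrightarrow\A_{j^\ast}$ and $\esse\restriction_s\models\psi_{j^\ast}$) is exactly the paper's learner phrased via the set $C_s$, and your ``key claim'' is the core of the paper's convergence argument. For the converse, your property~$(\star)$ neatly unifies the two ways solidity can fail (the paper treats the non-partial-order case separately with a one-line dismissal), and the three-phase fooling construction is identical; your detour through decisiveness via \thref{proposition:nonushapedecequiv} is harmless since the U-shape produced is the same one the paper exhibits directly.
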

		\begin{proof}
			For the left-to-right direction, suppose that $\K$ is $\nonushape$-learnable by some learner $\learnerM$ and assume that $\K$ is not a solid $\sigmainf{1}$-partial order. In case $\K$ is not a $\sigmainf{1}$-partial order then we can easily show that $\K$ is not $\nonushape$-learnable. So assume that $\K$ is a $\sigmainf{1}$-partial order but not a solid one: let $\A \in \K$ be such that $\thsigma{1}{\A} \setminus\bigcup \{\thsigma{1}{\B}: \B \in \K \text{ and } \thsigma{1}{\B}  \subset \thsigma{1}{\A}\} = \emptyset$. We define $\esse \in \ldofk$ letting $\esse\restriction_s \cong \A\restriction_s$ where $s= \min\{t: \learnerM(\esse\restriction_t)=\code{\A}\}$ (notice that such an $s$ must exist otherwise $\learnerM$ fails to even $\ex$ learn $\K$). By assumption there is some $\B \in \K$ such that $\B \not \cong \A$ and $ \thsigma{1}{\B}  \subset \thsigma{1}{\A}$ and $\esse\restriction_s$ can be extended to $\B$. Then start extending $\esse\restriction_s$ to a copy of $\B$. Since $\learnerM$ $\nonushape$-learns $\K$, there is a stage $s'>s$ such that $\learnerM(\esse\restriction_{s'})=\code{\B}$. Since $\thsigma{1}{\B}\subset \thsigma{1}{\A}$ and since $\esse\restriction_{s'}$ is finite we can extend again $\esse$ to a copy of $\A$, forcing $\learnerM$ to get back to output $\code{\A}$ and witnessing the desired contradiction.

			For the right-to-left direction, suppose that $\K=\{\A_i:i \in \nats\}$ is a solid $\sigmainf{1}$-partial order. For any $\A_i \in \K$, let $\varphi_i$ be the $\sigmainf{1}$ formula such that $\A_i \models \varphi_i$ and for any $j$ such that $\thsigma{1}{\A_j}\subset \thsigma{1}{\A_i}$, $\A_j \not\models \varphi_i$.  
			
			Given $\esse \in \ldofk$ and a stage $s \in \nats$,
			let \[C_s:=\{\code{\A_i } : \thsigma{1}{\esse\restriction_s} \subseteq \thsigma{1}{\A_i} \land \esse\restriction_s \models \varphi_i\}.\] 
			We define a learner $\learnerM$ as follows. At stage $0$ let $\learnerM(\esse\restriction_0)=?$ and at stage  $s+1$, let $\learnerM(\esse\restriction_{s+1}) \defas$
			\[
			\begin{cases}
				\learnerM(\esse\restriction_{s}) & \text{if } C_s=\emptyset \lor\big(\learnerM(\esse\restriction_{s}) =\code{\A_i} \text{ for some }i \text{ and } \\
				& \thsigma{1}{\esse\restriction_{s+1}}\subseteq \thsigma{1}{\A_i}\big)\\
				\min\{\code{\A_i} : \code{\A_i} \in C_s\} & \text{otherwise.}
			\end{cases}
			\]
			Notice that, by definition, $\learnerM$ changes its mind from a conjecture different from $?$ at stage $s$ if and only if $\thsigma{1}{\esse\restriction_s} \not\subseteq\thsigma{1}{\A_i}$ where $\code{\A_i}=\learnerM(\esse\restriction_{s-1})$: this implies that if $\esse \cong \A_i$ and at some stage $s$ $\learnerM(\esse\restriction_s)=\code{\A_i}$ then for all $t \geq s$, $\learnerM(\esse\restriction_t)=\code{\A_i}$. Hence, to conclude the proof, it remains to show that for any $\esse \in \ldofk$ and for any $i \in \nats$, if $\esse \cong \A_i$, there exists some stage $t$ such that $\learnerM(\esse\restriction_t)=\code{\A_i}$.  Suppose that $\esse \cong \A_i$ and notice that there exists some stage $s$ such that $\code{\A_i} \in C_{s}$. If $\learnerM(\esse\restriction_{s})=\code{\A_i}$ then there is nothing to prove. Otherwise, assume $\learnerM(\esse\restriction_{s})=\code{\A_j}$ for some $j \neq i$ and suppose that for every $s' \geq s$,  $\learnerM(\esse\restriction_{s'})=\code{\A_j}$. By $\learnerM$'s definition, this means for any $s' \geq s$,  $\thsigma{1}{\esse\restriction_{s'}} \subseteq \thsigma{1}{\A_j}$, and since $\esse \cong \A_i$ we have that $\thsigma{1}{\A_i}\subseteq\thsigma{1}{\A_j}$. On the other hand, $\learnerM(\esse\restriction_s)=\code{\A_j}$ implies that $\code{\A_j} \in C_s$ and hence $\esse \models \varphi_j$ which cannot be the case as $\esse \cong \A_i$ and $\A_i \not \models \varphi_j$ for every $j$ such that $\thsigma{1}{\A_i} \subseteq \thsigma{1}{\A_j}$. Repeating the same argument for every structure $\code{\A_k}$ such that $\code{\A_k}<\code{\A_i}$ and $\learnerM(\esse\restriction_{s'})=\code{\A_k}$ for some $s' \geq s$, we get that there exists some stage $s$ such that $\learnerM(\esse\restriction_s)=\code{\A_i}$. This together with the fact that $\learnerM$ changes its mind from a conjecture different from $?$ at stage $s$ if and only if $\thsigma{1}{\esse\restriction_s} \not\subseteq\thsigma{1}{\A_i}$ concludes the proof.
		\end{proof}
		
		The following proposition summarizes the relationships between the learning paradigms considered so far.
		\begin{proposition}
			\thlabel{proposition:nonushapeerange}
			The following relations between $\col$, $\nonushape$, $\ex$ and $\erange$ hold:
			\begin{enumerate}[(i)]
				\item $\col\strictlearnreducible{} \nonushape$ and $\col\strictlearnreducible{\mathrm{fin}} \nonushape$;
				\item $\nonushape \strictlearnreducible{} \erange$ and $\nonushape \learnequiv{\finitary} \erange$;
				\item  $\nonushape \strictlearnreducible{\finitary} \ex$ and hence, in particular $\nonushape \strictlearnreducible{} \ex$.
				
			\end{enumerate}
		\end{proposition}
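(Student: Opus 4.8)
The plan is to read off every relation from the four syntactic characterizations already available: $\col$-learnability is being a $\sigmainf{1}$-antichain (\thref{theorem:12antichains}), $\erange$-learnability is being a $\sigmainf{1}$-partial order (\thref{theorem:characterization_Erange}), $\nonushape$-learnability is being a solid $\sigmainf{1}$-partial order (\thref{theorem:solidposets}), and $\ex$-learnability is being a $\sigmainf{2}$-strong antichain. Under this dictionary every non-strict reduction becomes an elementary implication between closure conditions on $\sigmainf{1}$-theories, while every strictness claim can be witnessed by a family already treated in the paper, so no new learner needs to be built from scratch.

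First I would dispose of the easy inclusions. For $\col\learnreducible{}\nonushape$ (hence also $\col\learnreducible{\finitary}\nonushape$): if $\K$ is a $\sigmainf{1}$-antichain, then for every $\A\in\K$ the collection $\{\thsigma{1}{\B}:\B\in\K,\ \thsigma{1}{\B}\subsetneq\thsigma{1}{\A}\}$ is empty, so the solidity clause of \thref{definition:solidposets} is vacuously satisfied at $\A$, and $\K$ is a solid $\sigmainf{1}$-partial order. For $\nonushape\learnreducible{}\erange$ (and its finitary version) it suffices that a solid $\sigmainf{1}$-partial order is, by definition, a $\sigmainf{1}$-partial order. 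For $\nonushape\learnreducible{}\ex$ (and its finitary version) it is enough that, by \thref{definition:nonushapedec}, every $\nonushape$-learner is in particular an $\ex$-learner. For the strictness of the corresponding pairs I would use two finite families. The family $\{\omega,\omega^{*}\}$ is $\ex$-learnable by \thref{proposition:omega_vs_omega*}, but $\thsigma{1}{\omega}=\thsigma{1}{\omega^{*}}$, so it is not a $\sigmainf{1}$-partial order, hence not $\nonushape$-learnable by \thref{theorem:solidposets}; being finite, it yields $\nonushape\strictlearnreducible{\finitary}\ex$ and a fortiori $\nonushape\strictlearnreducible{}\ex$. The family $\{\mathsf{R}_2\oplus\mathsf{I},\mathsf{R}_3\oplus\mathsf{I}\}$ satisfies $\thsigma{1}{\mathsf{R}_2\oplus\mathsf{I}}\subsetneq\thsigma{1}{\mathsf{R}_3\oplus\mathsf{I}}$, since a single edge embeds into a path on three vertices while the latter does not embed into the former; thus it is a $\sigmainf{1}$-partial order but not a $\sigmainf{1}$-antichain, hence not $\col$-learnable, and yet it is solid, because the one nonempty proper lower cone, that of $\mathsf{R}_3\oplus\mathsf{I}$, is separated from it by the $\sigmainf{1}$-formula asserting the existence of a path on three vertices. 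Being finite, this witness establishes $\col\strictlearnreducible{}\nonushape$ as well as $\col\strictlearnreducible{\finitary}\nonushape$.

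The only genuinely non-formal step, and the one I expect to be the main obstacle, is the reverse inclusion $\erange\learnreducible{\finitary}\nonushape$ needed to upgrade $\nonushape\learnreducible{\finitary}\erange$ to $\nonushape\learnequiv{\finitary}\erange$, i.e.\ proving that \emph{every finite $\sigmainf{1}$-partial order is solid}. Here I would fix a finite $\sigmainf{1}$-partial order $\K$ and a structure $\A\in\K$, form the finite set $D=\{\B\in\K:\thsigma{1}{\B}\subsetneq\thsigma{1}{\A}\}$, choose for each $\B\in D$ a $\sigmainf{1}$-formula $\psi_{\B}\in\thsigma{1}{\A}\setminus\thsigma{1}{\B}$, and set $\psi:=\bigwedge_{\B\in D}\psi_{\B}$; this is again a $\sigmainf{1}$-formula because $\sigmainf{1}$ is closed under finite conjunctions, and by construction $\A\models\psi$ while $\B\not\models\psi$ for every $\B\in D$, so $\psi$ witnesses the solidity clause at $\A$. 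Since $\A$ was arbitrary, $\K$ is a solid $\sigmainf{1}$-partial order, hence $\nonushape$-learnable; combined with the trivial direction this gives $\nonushape\learnequiv{\finitary}\erange$.

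Finally, for the strictness $\nonushape\strictlearnreducible{}\erange$ I would invoke \thref{theorem: Erange_incomparable_E0}, where the family $\{\mathsf{R}_n\oplus\mathsf{I}:n\geq 2\}\cup\{\mathsf{R}\oplus\mathsf{I}\}$ is shown to be $\erange$-learnable but not $\ex$-learnable; since $\nonushape$-learnability refines $\ex$-learnability, this family is not $\nonushape$-learnable either, so $\erange\notlearnreducible{}\nonushape$. Assembling the non-strict reductions with the three witnessing families above yields all three items.
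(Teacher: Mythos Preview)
Your argument is correct, and it is in several places cleaner than the paper's. The overall skeleton matches (read everything off the syntactic characterizations and then supply witness families for strictness), but two steps differ in substance. For $\col\learnreducible{}\nonushape$ the paper actually builds a $\nonushape$-learner directly from a $\col$-learner $\learnerM$ by setting $\learnerM'(\esse\restriction_s)=\min\{\code{\A_i}:(\forall t\le s)\,\learnerM(\esse\restriction_t)\neq\code{\A_i}\}$, whereas you bypass this entirely by observing that in a $\sigmainf{1}$-antichain every proper lower cone is empty, so solidity is vacuous; your route is shorter and uses nothing beyond the characterizations already in hand. For $\erange\learnreducible{\finitary}\nonushape$ the paper points back to the concrete learner of \thref{proposition:Erange_finite_families} and notes it is already non-U-shaped, while you instead prove the purely syntactic fact that a finite $\sigmainf{1}$-partial order is automatically solid via a finite conjunction of separating formulas; this is again more in the spirit of your dictionary and avoids re-inspecting a learner. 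Your strictness witnesses also differ (the paper uses $\{\tilde L_3,\tilde L_4\}$ for (i) and $\{\tilde L_n:n\in\nats\}\cup\{\tilde\omega\}$ for (ii), while you use $\{\mathsf{R}_2\oplus\mathsf{I},\mathsf{R}_3\oplus\mathsf{I}\}$ and the family from \thref{theorem: Erange_incomparable_E0}), but all of these do the job. One cosmetic point: when you say solidity is ``vacuously satisfied'' at $\A$ with empty lower cone, the clause in \thref{definition:solidposets} becomes $\thsigma{1}{\A}\neq\emptyset$, which is fine since every structure here has domain $\nats$ and satisfies $\exists x\,(x=x)$; it would not hurt to say this explicitly.
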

		\begin{proof}
			To prove $\mathbf{(i)}$, we first show that $\col \learnreducible{} \nonushape$. Suppose that $\K=\{\A_i:i \in \nats\}$ is $\col$-learnable by a learner $\learnerM$. We define a learner $\learnerM'$ that $\nonushape$-learns $\K$ letting $\learnerM'(\esse\restriction_s):=\min\{\code{\A_i}: (\forall t \leq s)(\learnerM(\esse\restriction_t) \neq \code{\A_i})\}$. To see that $\learnerM'$ $\nonushape$-learns $\K$ suppose that $\esse \cong \A_i$. Let $s=\min\{t : (\forall \code{\A_j} < \code{\A_i})(\exists t_{j}<t)(\learnerM(\esse\restriction_{t_j})=\code{\A_j})\}$: notice that such an $s$ exists since, by hypothesis, $\learnerM$ $\col$-learns $\K$. Then for all $s' \geq s$ $\learnerM'(\esse\restriction_{s'})=\code{\A_i}$. By $\learnerM'$ definition it is also easy to check that for any $s''< s$, $\learnerM'(\esse\restriction_{s''}) \neq \code{\A_i}$.
			
			To see that $\nonushape \not\learnreducible{\mathrm{fin}} \col$ (and in particular, $\nonushape \not\learnreducible{} \col$), it suffices to notice that the family $\{\tilde{L_3}, \tilde{L_4}\}$ is $\nonushape$-learnable but not $\col$-learnable. Indeed, to prove $\nonushape$-learnability, given $\esse \in \ldofk$ we define a learner that always outputs $\tilde{L}_3$ except if, at some stage $s$, $\esse\restriction_s$ contains a linear order of four elements: clearly such a learner $\nonushape$-learns $\K$.  To show that $\K$ is not $\col$-learnable it suffices to notice that $\thsigma{1}{\tilde{L}_3}\subseteq\thsigma{1}{\tilde{L}_4}$ and apply \thref{theorem:12antichains}.

			To prove $\mathbf{(ii)}$, $\nonushape\learnreducible{}\erange$ follows from \thref{theorem:characterization_Erange} and \thref{theorem:solidposets} which says that $\nonushape$ and $\erange$ respectively learn all and only solid $\sigmainf{1}$-partial orders and $\sigmainf{1}$-partial orders:
			a solid $\sigmainf{1}$-partial order is in particular a $\sigmainf{1}$-partial order and hence we are done. To prove that the reduction is strict it suffices to notice that not every $\sigmainf{1}$-partial order is solid: i.e., take $\{\tilde{L}_n: n \in \nats\} \cup \{\tilde{\omega}\}$ and notice that $\thsigma{1}{\tilde{\omega}}\setminus \bigcup\{\thsigma{1}{\B}: \B \in \K \land \thsigma{1}{\B}\subset \thsigma{1}{\tilde{\omega}}\}=\emptyset$. 
			
			To show that $\nonushape \learnequiv{\finitary}\erange$ it remains to show that $\erange \learnreducible{\finitary} \nonushape$. To do so, notice that the learner $\learnerM$ defined in \thref{proposition:Erange_finite_families} to show that $\erange\learnreducible{\mathrm{fin}} \ex$ also witnesses that $\erange\learnreducible{\mathrm{fin}} \nonushape$. 
			
			We now prove $\mathbf{(iii)}$. The fact that $\nonushape$-learnability implies $\ex$-learnability is trivial as $\nonushape$-learnability is a restriction of $\ex$-learnability. 
			
			To see that the reduction is strict also when restricted to finite families just notice that $\{\omega,\omega^*\}$ is not even $\erange$-learnable (\thref{corollary:omega_omega*_E_range}). 
		\end{proof}

		We now give further results on $\nonushape$ learning.

		\begin{proposition}
			For any fixed $\alpha$, there exists a family that is $\nonushape$-learnable but not $\alpha$-learnable.
		\end{proposition}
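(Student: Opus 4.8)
The plan is to produce a single family that is $\nonushape$-learnable and yet fails to be $\alpha$-learnable for \emph{every} ordinal $\alpha$ at once; this obviously implies the stated result. The candidate is the family of finite linear orders, $\K\defas\{\tilde{L}_n : n\geq 3\}$, where the padding operation $\tilde{\cdot}$ of \thref{definition:tilde} is used to turn each structure into an infinite one.

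The first step is to check that $\K$ is $\nonushape$-learnable by showing that it is a solid $\sigmainf{1}$-partial order and invoking \thref{theorem:solidposets}. Since $\tilde{L}_m\hookrightarrow\tilde{L}_n$ whenever $m\leq n$, and $\sigmainf{1}$ sentences are preserved upwards along embeddings, the theories $\thsigma{1}{\tilde{L}_n}$ are linearly ordered by $\subseteq$; moreover the inclusions are strict, because the $\sigmainf{1}$ sentence $\chi_k$ expressing ``there exist $k$ elements forming a chain'' holds in $\tilde{L}_k$ but in no $\tilde{L}_m$ with $m<k$. The very same $\chi_k$ witnesses solidity, as it separates $\tilde{L}_k$ from its whole proper lower cone $\{\tilde{L}_m : m<k\}$.

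For the negative part I would prove something formally stronger, namely that no learner $\ex$-learning $\K$ admits an ordinal-valued mind-change counter. Suppose towards a contradiction that $\learnerM$ $\ex$-learns $\K$ with a mind-change counter $c$. Build an increasing chain of finite strings $\rho_0\sqsubseteq\rho_0'\sqsubseteq\rho_1\sqsubseteq\rho_1'\sqsubseteq\cdots$, consulting only $\learnerM$: let $\rho_0$ code a chain of length $3$; given $\rho_j$ (coding a chain of length $3+j$ together with finitely many pairwise incomparable extra points, hence an initial segment of a copy of $\tilde{L}_{3+j}$), extend $\rho_j$ by adding only new pairwise incomparable points until $\learnerM$ outputs $\code{\tilde{L}_{3+j}}$ --- this must happen, since $\rho_j$ followed by infinitely many incomparable points is a copy of $\tilde{L}_{3+j}\in\ldofk$, which $\learnerM$ is required to $\ex$-learn --- and call the resulting string $\rho_j'$; finally let $\rho_{j+1}$ be $\rho_j'$ with one more element placed on top of its chain, so that its longest chain now has length $4+j$. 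Then $\learnerM(\rho_j')=\code{\tilde{L}_{3+j}}\neq\code{\tilde{L}_{4+j}}=\learnerM(\rho_{j+1}')$, so $\learnerM$ changes its mind at least once strictly between $\rho_j'$ and $\rho_{j+1}'$, whence $c(\rho_{j+1}')<c(\rho_j')$. Thus $c(\rho_0')>c(\rho_1')>c(\rho_2')>\cdots$ is an infinite strictly decreasing sequence of ordinals, a contradiction.

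The delicate point --- and really the heart of the argument --- is that the sequence $(\rho_j')_j$ is produced by examining $\learnerM$ alone (not any particular target structure), so it is \emph{one} fixed infinite sequence all of whose finite initial pieces are legitimate initial segments of copies of members of $\K$; this is precisely what forces the infinite descending chain of counter values. One should also record the easy observation that adding a top element to the chain keeps $\rho_{j+1}$ extendible to $\tilde{L}_{3+j+1}$ (and indeed to $\tilde{L}_m$ for every $m\geq 3+j+1$), which is immediate from the definition of $\tilde{\cdot}$. The rest --- the upward preservation of $\sigmainf{1}$ sentences under embeddings and the bookkeeping of which conjecture is issued on which segment --- is routine.
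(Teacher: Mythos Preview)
Your argument is correct, but the route is genuinely different from the paper's. The paper simply invokes \cite[Theorem~3]{bazcipsan_cie} to obtain an $\Id$-learnable family that is not $\alpha$-learnable, and then appeals to the chain $\Id \learnreducible{} \col \learnreducible{} \nonushape$ (\thref{corollary:finidco} and \thref{proposition:nonushapeerange}) to conclude $\nonushape$-learnability. Your proof is self-contained: you exhibit the concrete family $\{\tilde{L}_n : n\geq 3\}$, verify the solid $\sigmainf{1}$-partial order condition directly, and then force an infinite descending sequence of counter values by a hands-on diagonalisation. Two points are worth noting. First, your family is \emph{not} $\Id$-learnable (indeed not even $\col$-learnable, since the $\sigmainf{1}$-theories form a chain rather than an antichain), so your witness genuinely lives higher in the hierarchy than the paper's; this is a small bonus, as it shows the gap between $\nonushape$ and $\alpha$-learning is already visible without passing through $\Id$. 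Second, the key observation that makes your descending-chain argument go through --- that each pair $\rho_j'\sqsubset\rho_{j+1}'$ can be compared inside the domain of $c$ because $\rho_{j+1}'$ extends to a copy of some $\tilde{L}_m\in\K$, even though $\bigcup_j\rho_j'$ itself codes $\tilde{\omega}\notin\K$ --- is exactly right and deserves the emphasis you give it. The paper's proof is shorter because it outsources the hard work to an external reference; yours is more informative and requires nothing beyond the characterisation in \thref{theorem:solidposets}.
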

		\begin{proof}
			Using \cite[Theorem 3]{bazcipsan_cie}, it is easy to define an $\Id$-learnable family that is not $\alpha$-learnable. Combining \thref{corollary:finidco} and \thref{proposition:nonushapeerange} we know that  $\Id \strictlearnreducible{} \nonushape$ and hence we obtain that $\K$ is $\nonushape$-learnable. 
		\end{proof}

		Before concluding this section we observe the following. In the previous subsection, we have shown that $E_0$-learnability and $\erange$-learnability give us the first example of an incomparability between two criteria in our learning hierarchy. It is natural to ask whether these two notions have a meet in the learning hierarchy and notice that $\nonushape$-learning is a natural candidate. Indeed, the fact that $\erange$-learnability coincides with asking that $\K$ is a $\sigmainf{1}$-partial order (\thref{theorem:characterization_Erange}) seems to suggest that a learner can adjust its conjecture depending on how the given structure is extended, without needing to get back to previous conjectures. In addition, $E_0$-learnability suggests that this process should eventually stop. The following family is a counterexample to this intuition as shown in \thref{proposition:not_the_meet}. We define the family of partial orders $\mathfrak{P}=\{\tilde{\mathcal{P}_i} : i \in \nats\}$, where
		for $k>0$,
		\begin{itemize}
			\item $\mathcal{P}_k$ is the partial order $(\{0,\dots,2k+1\},\leq_{\mathcal{P}_k})$, where for $i<k$, $2i\leq_{\mathcal{P}_k} 2i+2$ and for $i \leq k$ $2i \leq_{\mathcal{P}_k} 2i+1$, and
			\item $\mathcal{P}_0$ is the partial order $(\nats,\leq_{\mathcal{P}_0})$, where for every $i$, $2i\leq_{\mathcal{P}_0} 2i+2$ and for every $j>0$,  $2j \leq_{\mathcal{P}_0} 2j-1$.
		\end{itemize}
		Notice that $\thsigma{1}{\tilde{\mathcal{P}_1}}\subseteq \thsigma{1}{\tilde{\mathcal{P}_2}}\subseteq \dots \subseteq \thsigma{1}{\mathcal{P}_0}$.
		\begin{table}[H]
			\centering
			\begin{tabular}{c|c}
				\begin{tikzpicture}[scale=1, main/.style = {draw, circle}] 				
					\node[main] (0) {0}; 
					\node[main] (2) [right of=0,xshift=0.5cm] {2}; 
					\node[main] (4) [right of=2,xshift=0.5cm] {4}; 
					\node[main] (1) [below of=0,yshift=-0.5cm] {1}; 
					\node[main] (3) [below of=2,yshift=-0.5cm] {3}; 
					\node[main] (5) [below of=4,yshift=-0.5cm] {5}; 
					\draw[->] (0) -- (2);
					\draw[->] (2) -- (4);
					\draw[->] (0) -- (1);
					\draw[->] (2) -- (3);
					\draw[->] (4) -- (5);
				\end{tikzpicture}&
				\begin{tikzpicture}[scale=2.0, main/.style = {draw, circle}] 
					\node[main] (0) {0}; 
					\node[main] (2) [right of=0,xshift=0.5cm] {2}; 
					\node[main] (4) [right of=2,xshift=0.5cm] {4}; 
					\node[main] (6) [right of=4,xshift=0.5cm] {6}; 
					\node[] (dots) [right of=6,xshift=0.5cm] {$\dots$}; 
					\node[main] (1) [below of=2,yshift=-0.5cm] {1}; 
					\node[main] (3) [below of=4,yshift=-0.5cm] {3}; 
					\node[main] (5) [below of=6,yshift=-0.5cm] {5}; 
					\node[] (dots2) [right of=5,xshift=0.5cm] {$\dots$}; 
					\draw[->] (0) -- (2);
					\draw[->] (2) -- (4);
					\draw[->] (4) -- (6);
					\draw[->] (6) -- (dots);
					\draw[->] (2) -- (1);
					\draw[->] (4) -- (3);
					\draw[->] (6) -- (5);
				\end{tikzpicture} \\
			\end{tabular}
			\caption{On the left-hand side a graphical representation of $\mathcal{P}_2$ and on the right-hand side a graphical representation of $\mathcal{P}_0$.}
		\end{table}
		\begin{proposition}
			\thlabel{proposition:not_the_meet}
			$\mathfrak{P}$ is both $E_0$-learnable and $\erange$-learnable, but it is not $\nonushape$-learnable.
		\end{proposition}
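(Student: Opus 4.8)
The plan is to treat the three assertions separately, each time reducing to one of the syntactic characterizations already established. For $\erange$-learnability, by \thref{theorem:characterization_Erange} it is enough to see that $\mathfrak{P}$ is a $\sigmainf{1}$-partial order, i.e.\ that the theories $\thsigma{1}{\tilde{\mathcal{P}_i}}$ ($i\in\nats$) are pairwise distinct. We already recorded the inclusions $\thsigma{1}{\tilde{\mathcal{P}_1}}\subseteq\thsigma{1}{\tilde{\mathcal{P}_2}}\subseteq\cdots$, each contained in $\thsigma{1}{\tilde{\mathcal{P}_0}}$, so it suffices to observe that each inclusion is strict: for $k\geq 1$ the $\sigmainf{1}$ sentence asserting the existence of a chain with $k+3$ elements holds in $\tilde{\mathcal{P}_{k+1}}$ and in $\tilde{\mathcal{P}_0}$ but not in $\tilde{\mathcal{P}_k}$, whose longest chain has exactly $k+2$ elements. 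For the failure of $\nonushape$-learnability, by \thref{theorem:solidposets} it is enough to show $\mathfrak{P}$ is not a solid $\sigmainf{1}$-partial order, and the witness is $\A=\tilde{\mathcal{P}_0}$: its proper $\sigmainf{1}$-lower cone is exactly $\{\tilde{\mathcal{P}_k}:k\geq 1\}$, so the solidity clause at $\A$ asks that $\thsigma{1}{\tilde{\mathcal{P}_0}}\neq\bigcup_{k\geq 1}\thsigma{1}{\tilde{\mathcal{P}_k}}$, and I claim the two sets coincide. Since over a finite relational signature a $\sigmainf{1}$ sentence is true in a structure iff some fixed finite structure embeds into it, it suffices to check that every finite substructure $F$ of $\tilde{\mathcal{P}_0}$ embeds into $\tilde{\mathcal{P}_k}$ for all large $k$: the vertices of $F$ lying in the copy of $\mathcal{P}_0$ are contained in $\mathcal{P}_0\restriction_N$ for some $N$, and one checks (shifting the spine of $\mathcal{P}_0\restriction_N$ upward inside $\mathcal{P}_k$) that $\mathcal{P}_0\restriction_N\hookrightarrow\mathcal{P}_k$ for $k\geq N$, while the remaining, isolated vertices of $F$ are sent into the infinite antichain part of $\tilde{\mathcal{P}_k}$.

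For $E_0$-learnability, recall from \cite[Theorem 3.1]{bazhenov2021learning} that this is equivalent to $\ex$-learnability, so it suffices to build an $\ex$-learner. Given $\esse\in\mathrm{LD}(\mathfrak{P})$, at stage $s$ let $L_s$ be the length of a longest chain occurring in $\esse\restriction_s$, and set
\[
\learnerM(\esse\restriction_s)\defas\begin{cases}\code{\tilde{\mathcal{P}_{L_s-2}}}&\text{if }L_s\geq 3\text{ and }\mathcal{P}_{L_s-2}\hookrightarrow\esse\restriction_s,\\ \code{\tilde{\mathcal{P}_0}}&\text{otherwise.}\end{cases}
\]
If $\esse\cong\tilde{\mathcal{P}_i}$ with $i\geq 1$, then $L_s$ is nondecreasing, is bounded above by the length $i+2$ of a longest chain of $\tilde{\mathcal{P}_i}$, and eventually equals $i+2$; moreover $\mathcal{P}_i$ is eventually a substructure of $\esse\restriction_s$. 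Hence from some stage on $\learnerM$ outputs $\code{\tilde{\mathcal{P}_i}}$ and never changes again.

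The real work is the case $\esse\cong\tilde{\mathcal{P}_0}$, where $\learnerM$ must eventually commit to $\code{\tilde{\mathcal{P}_0}}$ even though every $\esse\restriction_s$ is consistent with being an initial segment of a large $\tilde{\mathcal{P}_k}$ and may even contain copies of $\mathcal{P}_k$ for arbitrarily large $k$. Let $s^\ast$ be the least stage by which the $\esse$-images $o<t$ of the two smallest vertices of (the copy of) the spine of $\mathcal{P}_0$ have both appeared; note that $t$ lies below every vertex of the $\mathcal{P}_0$-part except $o$. Fix $s\geq s^\ast$ and suppose toward a contradiction that some $C\cong\mathcal{P}_{L_s-2}$ embeds in $\esse\restriction_s$; then $L_s\geq 3$, $C$ has at least four vertices and is connected, hence consists entirely of $\mathcal{P}_0$-vertices, and $C$ contains a chain of length $L_s$. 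Do a case analysis on whether $o,t\in C$. If $t\notin C$, or if $t\in C$ but $o\notin C$, then, since $t$ lies below every $\mathcal{P}_0$-vertex other than $o$, one extends a longest chain of $C$ downward by $o$ and, when $t\notin C$, also by $t$, obtaining a chain of length at least $L_s+1$ inside $\esse\restriction_s$, contradicting the maximality of $L_s$. If $o,t\in C$, then $o$ is the minimum of $C$ while $t$ is the unique minimal element of $C\setminus\{o\}$, so $o$ is covered by a single element of $C$; but the minimum of $\mathcal{P}_{L_s-2}$ is covered by two elements, a contradiction. Thus for every $s\geq s^\ast$ the candidate test fails and $\learnerM(\esse\restriction_s)=\code{\tilde{\mathcal{P}_0}}$; together with the previous paragraph this shows $\learnerM$ $\ex$-learns $\mathfrak{P}$.

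The main obstacle is precisely this last convergence argument. The family is built so that $\tilde{\mathcal{P}_0}$ is a limit of the $\tilde{\mathcal{P}_k}$ — indeed it has the same $\sigmainf{1}$-theory as their union, which is exactly what forced the failure of solidity above — so no finite amount of data can distinguish $\tilde{\mathcal{P}_0}$ from a sufficiently advanced $\tilde{\mathcal{P}_k}$, nor can it rule out that the infinite component keeps growing. What nonetheless makes an $\ex$-learner possible is that once the bottom of the infinite component has been revealed, every finite copy of a $\mathcal{P}_k$ still present in the data is necessarily ``capped'' by a strictly longer chain, which is exactly what the chain-length test in $\learnerM$ detects; carrying out this bookkeeping correctly (the case analysis above) is the heart of the proof, while the $\erange$- and $\nonushape$-parts are essentially immediate from the characterizations and the chain of $\sigmainf{1}$-theories.
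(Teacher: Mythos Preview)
Your proof is correct and, for the $\erange$ part, proceeds exactly as the paper does. The other two parts use genuinely different arguments, so let me record the comparison.

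For the failure of $\nonushape$-learnability, the paper gives a direct diagonalization: it builds a copy of $\tilde{\mathcal{P}_0}$, waits for the learner to say $\code{\tilde{\mathcal{P}_0}}$, then extends to some $\tilde{\mathcal{P}_k}$ until the learner switches, and finally extends back to $\tilde{\mathcal{P}_0}$, producing a U-shape. You instead invoke \thref{theorem:solidposets} and show that $\mathfrak{P}$ is not solid by proving $\thsigma{1}{\tilde{\mathcal{P}_0}}=\bigcup_{k\geq 1}\thsigma{1}{\tilde{\mathcal{P}_k}}$. Your route is cleaner conceptually, since it simply cashes in the characterization already established; the paper's route is more self-contained and makes the obstruction explicit at the level of the learner. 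One small wording issue: your sentence ``a $\sigmainf{1}$ sentence is true in a structure iff some fixed finite structure embeds into it'' is slightly imprecise (a $\sigmainf{1}$ sentence corresponds to a \emph{countable family} of finite structures), but what you actually use---that any witness to a $\sigmainf{1}$ sentence in $\tilde{\mathcal{P}_0}$ lives in a finite substructure which then embeds into $\tilde{\mathcal{P}_k}$ for large $k$---is exactly right.

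For $E_0$-learnability both proofs build an explicit $\ex$-learner, but the learners are different. The paper's learner outputs $\code{\tilde{\mathcal{P}_0}}$ whenever $\esse\restriction_s$ contains elements $a<b$ such that every other element is either above $b$ or totally isolated (this detects the images of $0$ and $2$ in $\mathcal{P}_0$), and otherwise outputs the least $n>0$ with $\esse\restriction_s\hookrightarrow\mathcal{P}_n$. Your learner instead tracks the longest chain length $L_s$ and tests whether $\mathcal{P}_{L_s-2}\hookrightarrow\esse\restriction_s$. Both exploit the same structural asymmetry---that in $\mathcal{P}_0$ the minimum has a unique cover while in each $\mathcal{P}_k$ it has two---but you package it as a chain-length contradiction rather than a first-order test. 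Your case analysis on the positions of $o$ and $t$ inside a putative embedded copy of $\mathcal{P}_{L_s-2}$ is correct; the paper's formulation avoids that case analysis at the cost of a slightly more involved learner.
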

		\begin{proof}
			We first show that  $\mathfrak{P}$ is $E_0$-learnable. By \cite[Theorem 3.1]{bazhenov2021learning} this is the same as showing that $\mathfrak{P}$ is $\ex$-learnable. We define an $\ex$ learner $\learnerM$ as follows. Given $\esse \in \ldofk$, at a stage $s$, we have that  $\learnerM(\esse\restriction_s) = \code{\tilde{\mathcal{P}_0}}$ if
			\[(\exists a,b \in \esse\restriction_s)(\forall c \in \esse\restriction_s)[a \leq_{\esse} b \land b \nleq_{\esse} a \land (c = a \lor   b \leq_{\esse} c  \lor (\forall d \in \esse\restriction_s)[ c=d \lor c \mid_{\esse} d])].\]
			Otherwise, $\learnerM(\esse\restriction_s)\defas \code{\tilde{\mathcal{P}_n}}$ where $n \defas \min\{i : \esse\restriction_s \hookrightarrow \mathcal{P}_i \text{ and }i>0 \}$.
			
			Informally, $\learnerM$ outputs $\code{\tilde{\mathcal{P}_0}}$ if $\esse\restriction_s$ contains an element $a$ that \lq\lq behaves\rq\rq\ like the element $0$ in $\mathcal{P}_0$ and an element $b$ that \lq\lq behaves\rq\rq\ like the element $2$ in $\mathcal{P}_0$. If such an element does not exist at stage $s$, $\learnerM$ outputs the \lq\lq smallest\rq\rq\ $\mathcal{P}_n$ with $n>0$ on which $\esse\restriction_s$ embeds into. 
			
			To show that $\mathfrak{P}$ is $\erange$-learnable it is easy to check that, for any $i \neq j$ $\thsigma{1}{\tilde{\mathcal{P}_i}} \neq \thsigma{1}{\tilde{\mathcal{P}_j}}$, i.e., $\mathfrak{P}$ is a $\sigmainf{1}$-partial order. By \thref{theorem:characterization_Erange}, we obtain that $\mathfrak{P}$ is $\erange$-learnable.
			
			We now show that $\mathfrak{P}$ is not $\nonushape$-learnable. To do so, suppose that there exists a learner $\learnerM$ which $\nonushape$-learns $\mathfrak{P}$. We construct a copy $\esse$ of $\tilde{\mathcal{P}_0}$ as follows. Start constructing $\esse$ as a copy of $\tilde{\mathcal{P}_0}$: if for all $s$, $\learnerM(\esse\restriction_s) \neq \code{\tilde{\mathcal{P}_0}}$, then $\learnerM$ fails even to $\ex$-learn $\mathfrak{P}$. Hence, there exists a stage $s_0$ such that $\learnerM(\esse\restriction_{s_0}) = \code{\tilde{\mathcal{P}_0}}$. After stage $s_0$, pick any $k$ such that the number of elements in $\esse\restriction_{s_0}$ is less than $k$, and start extending $\esse_{s_0}$ to $\esse'\cong\tilde{\mathcal{P}_k}$. If for all $s > s_0$, $\learnerM(\esse'\restriction_s) \neq \code{\tilde{\mathcal{P}_k}}$, then $\learnerM$ fails even to $\ex$-learn $\mathfrak{P}$. Hence, there exists a stage $s_1$ such that $\learnerM(\esse'\restriction_{s_1}) = \code{\tilde{\mathcal{P}_k}}$: at this point, since for any $k>0$, $\thsigma{1}{\tilde{\mathcal{P}_k}}\subseteq \thsigma{1}{\tilde{\mathcal{P}_0}}$, we can continue to extend $\esse'_{s_1}$ as a copy of $\tilde{\mathcal{P}_0}$, forcing $\learnerM$ to change its mind back to $\code{\tilde{\mathcal{P}_0}}$ proving that $\mathfrak{P}$ is not $\nonushape$-learnable.
		\end{proof}

		\section{Characterizing learnabilities in terms of $\sigmainf{2}$-formulas}
		\label{sec:sigma2}
		We now define another learning criterion for our paradigm that comes from classical algorithmic learning theory, namely \emph{partial learnability}, denoted by $\pl$.
		
		\begin{definition}
			\thlabel{definition:pl}
			A   family of structures $\K$ is $\pl$-\emph{learnable} if, for every $\esse\in\ldofk$, the learner outputs infinitely often a conjecture if and only if it is the correct one. That is, the set $\{n : \learnerM(\esse\restriction_n) = \code{\A} \}$ is infinite if and only if $\A \cong \esse$.
		\end{definition}
		Without loss of generality, we assume that a $\pl$-learner can output infinitely often  \lq\lq ?\rq\rq.
		
		\begin{theorem}
			\thlabel{theorem:plcharacterization}
			A family $\K$ is $\pl$-learnable if and only if $\K$ is a $\sigmainf{2}$-antichain.
		\end{theorem}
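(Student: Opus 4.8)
For the direction ($\Leftarrow$) I would start from the separating formulas supplied by the definition of $\sigmainf{2}$-antichain: writing $\K=\{\A_i:i\in\nats\}$, for each ordered pair $i\neq j$ fix a $\sigmainf{2}$ formula $\varphi_{ij}$ with $\A_i\models\varphi_{ij}$ and $\A_j\not\models\varphi_{ij}$. The key elementary observation is that, for $\esse\in\ldofk$, one has $\esse\cong\A_i$ if and only if $\esse\not\models\varphi_{ji}$ for every $j\neq i$: if $\esse\cong\A_k$ with $k\neq i$ then $\esse\models\varphi_{ki}$, while if $\esse\cong\A_i$ then $\esse\not\models\varphi_{ji}$ for all $j$ by the choice of $\varphi_{ji}$. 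Next I would unfold that each $\varphi_{ji}$ is $\sigmainf{2}$, say $\varphi_{ji}=\bigdoublevee_{l}\exists\overline{y}_l\,\psi_l$ with every $\psi_l$ a $\piinf{1}$ formula: call a pair $w=(l,\overline{a})$ a \emph{witness}, and note that, since $\psi_l$ is a conjunction of universal formulas, $\esse\not\models\psi_l(\overline{a})$ iff some finite approximation of $\psi_l(\overline{a})$ is already refuted in some $\esse\restriction_s$. Hence $\esse\not\models\varphi_{ji}$ iff \emph{every} witness of $\varphi_{ji}$ is eventually refuted by the data, a condition of the shape ``$\forall w\,\exists s$''.

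For the learner I would then run, in parallel (round robin) over all $i$, a process $P_i$ marching through a fixed enumeration of all witnesses attached to the formulas $\varphi_{ji}$ ($j\neq i$); at its $n$-th step $P_i$ waits until the $n$-th such witness is refuted by the fragment seen so far, then advances and signals ``output $\code{\A_i}$''. Using the coding trick from the proof of \thref{theorem:12antichains} (and, since learners here are unrestricted, an oracle encoding the $\varphi_{ij}$'s, cf.\ \thref{lemma:folklore}), this yields a learner $\learnerM$ that produces $\code{\A_i}$ exactly at the stages where $P_i$ signals. If $\esse\cong\A_i$ then every $\varphi_{ji}$ is false, every witness is eventually refuted, $P_i$ never stalls, so $\code{\A_i}$ appears infinitely often; if $\esse\cong\A_k$ with $k\neq i$ then $\esse\models\varphi_{ki}$, some witness of $\varphi_{ki}$ survives forever, $P_i$ stalls at it, so $\code{\A_i}$ appears only finitely often. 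With the observation above, $\learnerM$ $\pl$-learns $\K$.

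For ($\Rightarrow$) I would argue by contraposition: if $\K$ is not a $\sigmainf{2}$-antichain then there are $\A\neq\B$ in $\K$ with $\thsigma{2}{\A}\subseteq\thsigma{2}{\B}$. The first step is that this already forces $\thsigma{1}{\A}=\thsigma{1}{\B}$: the inclusion $\sigmainf{1}\subseteq\sigmainf{2}$ gives $\thsigma{1}{\A}\subseteq\thsigma{1}{\B}$, while prefixing a $\piinf{1}$ sentence true in $\A$ with a dummy existential quantifier shows $\thpi{1}{\A}\subseteq\thpi{1}{\B}$, whose contrapositive gives $\thsigma{1}{\B}\subseteq\thsigma{1}{\A}$. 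So $\A$ and $\B$ have the same finite substructures, and any finite fragment of a copy of one extends to a copy of the other. Assuming a $\pl$-learner $\learnerM$ for $\K$, I would then build a copy $\esse$ by a flip-flop: keeping a current target alternating between $\A$ and $\B$, extend $\esse$ along a copy of the current target until $\learnerM$ outputs that target's conjecture once more, then switch. Switching is always possible by the previous step, and each phase must end — otherwise $\esse$ would be a genuine copy of the current target on which $\learnerM$ never outputs that conjecture again. So the construction flips infinitely often, $\learnerM$ outputs both $\code{\A}$ and $\code{\B}$ infinitely often on $\esse$, and since $\esse\in\ldofk$ is a copy of $\A$ or of $\B$, in either case $\learnerM$ is wrong infinitely often — contradiction.

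The step I expect to be the main obstacle is the very last clause: ensuring the infinitely-flipped $\esse$ actually lands in $\ldofk$ rather than converging to some third structure. Keeping $\esse$ a copy of, say, $\B$ requires running a back-and-forth with a fixed copy of $\B$ through the $\B$-phases while guaranteeing that the elements added during the $\A$-phases can be absorbed into it; this is where one needs the real strength of $\thsigma{2}{\A}\subseteq\thsigma{2}{\B}$ beyond the equality of $\sigmainf{1}$-theories — concretely, the standard extension fact that a finite fragment of a copy of $\A$ can be extended, inside a copy of $\A$, to one deciding the relevant $\sigmainf{2}$-information while remaining a fragment compatible with the chosen copy of $\B$. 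Arranging this so that every phase still terminates is the technical heart; the remaining bookkeeping is a transcription of the pattern of \thref{theorem:12antichains}.
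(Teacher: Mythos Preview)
Your $(\Leftarrow)$ direction is correct and takes a genuinely different route from the paper. The paper does not unfold the $\sigmainf{2}$ formulas at all: instead it observes that each pair $\{\A_i,\A_j\}$ is a $\sigmainf{2}$-strong antichain (by \thref{lemma:easyposet}(c)), hence $\ex$-learnable by some $\learnerM_{i,j}$ via the known characterization, and then builds the $\pl$-learner by dovetailing the $\learnerM_{i,j}$'s with a counter mechanism. Your witness-enumeration argument is more self-contained (it does not invoke the $\ex$-characterization) at the cost of manipulating the syntax of $\sigmainf{2}$ formulas explicitly; the paper's approach is shorter because the syntactic work has already been packaged into \cite[Theorem 3.1]{bazhenov2020learning}.

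Your $(\Rightarrow)$ direction, however, has the gap you yourself flag, and your sketch does not close it. The problem is exactly the tension you describe: to force an $\A$-phase to terminate you must guarantee that \emph{if it never terminates} then the limit is a genuine copy of $\A$, but to make the infinitely-flipped limit land on $\B$ you must simultaneously run a back-and-forth with a fixed copy of $\B$ and absorb the $\A$-phase elements into it. Equality of $\sigmainf{1}$-theories gives you that each finite fragment of $\A$ embeds into $\B$, but not that a \emph{given} partial embedding into $\B$ extends over an arbitrary $\A$-extension --- and the ``standard extension fact'' you invoke is not standard and is not clearly true in this generality. The paper sidesteps this entirely: from a $\pl$-learner $\learnerM$ for $\K$ it defines $\learnerM^*(\esse\restriction_s)$ to be the most recent output of $\learnerM$ lying in $\{\code{\A},\code{\B}\}$; since $\learnerM$ outputs the correct one of these infinitely often and the wrong one only finitely often, $\learnerM^*$ is an $\ex$-learner for $\{\A,\B\}$, contradicting \cite[Theorem 3.1]{bazhenov2020learning}. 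This two-line reduction replaces your entire flip-flop construction and delegates the delicate back-and-forth work to the already-established $\ex$-characterization --- which is precisely where that work belongs.
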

		
		\begin{proof}
			We first show the right-to-left direction. Suppose that $\K=\{\A_i : i\in\nats \}$ is a $\sigmainf{2}$-antichain. By \thref{lemma:easyposet}$(c)$, for any $i < j$, the family $\{\A_i, \A_j\}$ is a $\sigmainf{2}$-strong antichain and hence, by \cite[Theorem 3.1]{bazhenov2020learning}we obtain that the family $\{\A_i,\A_j\}$ is $\ex$-learnable and we denote the corresponding learner with $\learnerM_{i,j}$. We define the following counter for a learner $\learnerM$ in general. For $i,s \in \nats$, let 
			\[c(\learnerM,i,s) \defas  |\{t \leq s : \learnerM(\esse\restriction_t)=\code{\A_i}\}|.\]
			It is easy to notice that for any $\learnerM$, $i,s$, $c(\learnerM,i,s) \leq s+1$.

			A $\pl$-learner $\mathbf{L}$ for $\K$ is defined as follows: For any $\esse \in \ldofk$, let $\mathbf{L}(\esse\restriction_{\str{i,k}}) \defas$ \[
			\begin{cases}
				? & \text{if } k=0\\
				\code{\A_i} & \text{if } (\exists k' < k )(\forall j \leq k')
				( c(\mathbf{L},i,\str{i,k-1}) < k' \land
				c(\mathbf{L},i,\str{i,k-1}) < c(\learnerM_{i,{j}},i,k)
				)
				\\
				? & \text{otherwise.}
			\end{cases}
			\]
			Informally, in stages $\str{i,\cdot}$ $\mathbf{L}$ is taking care of $\A_i$. At stage $\str{i,k}$, $\mathbf{L}$ outputs  $\code{\A_i}$ if there is $k'<k$ such that $\learnerM_{i,0},\dots, \learnerM_{i,k'}$ all output $\code{\A_i}$ more than $c(\mathbf{L},i,\str{i,k-1})$ times and $\mathbf{L}$ has output $\code{\A_i}$ only  less than $k'$ times.

			Now, suppose that $\esse \cong \A_i$. Then, for all $j\neq i$, $\learnerM_{i,j}(\esse)$ will eventually stabilize to $\code{\A_i}$. We claim that this implies that for any $n$ there exists a $k\geq n+1$ such that $\mathbf{L}(\esse\restriction_{\str{i,k}})= \code{\A_i}$. To prove the claim, fix $n$ and let $k \defas 1 + \min \{ k' : (\forall j \leq n+1)(c(\learnerM_{i,j},i ,k' ) \geq n+1\})$ (notice that $k \geq n+1$). Such a $k$ must exist as all $\learnerM_{i,j}(\esse)$ will eventually stabilize to $\code{\A_i}$. By $\mathbf{L}$'s definition,  the second condition applies and $\mathbf{L}$ at stage $\str{i,k}$ will output $\code{\A_i}$.
			
			Suppose that $\esse \cong \A_j$ with $i\neq j$. Then there must be a stage $k$ such that for every $k' \geq k$ $\learnerM_{ij}(\esse \restriction_{k'})= \code{\A_j} \neq \code{\A_i}$. At such a stage $\learnerM_{ij}$ has outputted the conjecture $\code{\A_i}$ at most $k$ times and it will not output $\code{\A_i}$ anymore. Hence, by the second condition of $\mathbf{L}$'s definition,   
			it is clear that $\mathbf{L{}}$ cannot output $\code{\A_i}$ for more than $\max\{j+1,k+1\}$ times. This concludes the first part of the proof.

			For the left-to-right direction, suppose that there are $\A_i,\A_j\in \K$ so that $\thsigma{2}{\A_i}$ and $\thsigma{2}{\A_j}$ do not form an antichain and suppose by contradiction that $\K$ is $\pl$-learnable by some $\learnerM$.
			We show that from such a learner  one can define an $\ex$-learner $\learnerM^*$ learning $\{\A_i,\A_j\}$, contradicting \cite[Theorem 3.1]{bazhenov2020learning}. 
			
			To do so, for any $\esse \in \mathrm{LD}(\{\A_i,\A_j\})$ at stage $s$, let
			\[
			\learnerM^*(\esse\restriction_s)=\begin{cases}
				\learnerM(\esse\restriction_s) &\text{if $\learnerM(\mathcal{S})\in \{\code{\A_i},\code{\A_j}\}$};\\
				\learnerM(\esse\restriction_t) & \text{otherwise, where } t= \max\{t' \leq s : \learnerM(\esse\restriction_{t'})\in \{\code{\A_i},\code{\A_j}\}\}.
			\end{cases}
			\]
			It is clear that $\learnerM^*$ learns $\{\A_i,\A_j\}$ and this concludes the proof.
		\end{proof}
		
		\subsection{$\pl$-learnability under learning reducibility}
		
		\begin{proposition}
			\thlabel{proposition:E3pl}
			$E_3\learnreducible{} \pl$.
		\end{proposition}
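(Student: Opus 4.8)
The plan is to route everything through the syntactic characterizations rather than building a $\pl$-learner by hand. Since $\pl$-learnability is equivalent to being a $\sigmainf{2}$-antichain (\thref{theorem:plcharacterization}), and the antichain condition is purely \emph{pairwise} — $\K$ is a $\sigmainf{2}$-antichain exactly when, for every pair of distinct $\A,\B\in\K$, the theories $\thsigma{2}{\A}$ and $\thsigma{2}{\B}$ are $\subseteq$-incomparable — it suffices to analyse two-element subfamilies of an $E_3$-learnable $\K$ and show each such pair forms a $\sigmainf{2}$-antichain.

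First I would observe that $E_3$-learnability is inherited by subfamilies: if $\Gamma$ continuously reduces $\ldofk$ to $E_3$, then for any $\A,\B\in\K$ the restriction of $\Gamma$ to $\mathrm{LD}(\{\A,\B\})\subseteq\ldofk$ is a continuous reduction of $\mathrm{LD}(\{\A,\B\})$ to $E_3$ (continuity and the ``$\cong$ iff $E_3$-related'' equivalence both survive restriction to a sub-learning-domain), so $\{\A,\B\}$ is $E_3$-learnable. I would then use that $\{\A,\B\}$ is a \emph{finite} family together with $E_0\learnequiv{\finitary}E_3$ from \thref{theorem:prevwork}: hence $\{\A,\B\}$ is $E_0$-learnable, hence $\ex$-learnable by \cite[Theorem 3.1]{bazhenov2021learning}, hence a $\sigmainf{2}$-strong antichain by (the reformulation of) \cite[Theorem 3.1]{bazhenov2020learning}, and therefore a $\sigmainf{2}$-antichain by \thref{lemma:easyposet}(a) (for two-element families the two notions coincide anyway, by \thref{lemma:easyposet}(c)). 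Thus $\thsigma{2}{\A}$ and $\thsigma{2}{\B}$ are incomparable.

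Since this holds for every pair of distinct structures in $\K$, the family $\K$ is a $\sigmainf{2}$-antichain, and a last appeal to \thref{theorem:plcharacterization} yields that $\K$ is $\pl$-learnable, as required.

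I do not expect a genuine obstacle here; the only points needing care are the two small observations above, namely that $\pl$-learnability is a condition on pairs and that $E_3$ collapses to $E_0$ on finite families. For completeness I would note the more computational alternative, which mirrors the right-to-left direction of \thref{theorem:plcharacterization}: for each $i$ one could dovetail the $\ex$-learners $\learnerM_{i,j}$ for the pairs $\{\A_i,\A_j\}$ (which exist by the argument above) and reuse the counter bookkeeping of that proof to output $\code{\A_i}$ infinitely often exactly when all these learners stabilise on $\code{\A_i}$. Invoking \thref{theorem:plcharacterization} directly, however, avoids repeating that construction, so that is the route I would present.
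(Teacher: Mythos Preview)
Your argument is correct, and it differs from the paper's proof. The paper does not route through the syntactic characterizations at all: it builds a $\pl$-learner directly from the $E_3$-reduction $\Gamma$, by fixing for each pair $i\neq j$ the first column $d_{ij}$ on which $\Gamma(\A_i)$ and $\Gamma(\A_j)$ are not $E_0$-equivalent, together with the infinite sequence $p_{ij}$ of disagreement positions in that column, and then outputting $\code{\A_i}$ for the $n$-th time only when, for every $m<n$ with $m\neq i$, $\Gamma(\esse)$ agrees with $\Gamma(\A_i)$ on $n$ consecutive $p_{im}$-indexed entries of column $d_{im}$. Your approach is shorter and conceptually cleaner --- it leverages that the $\sigmainf{2}$-antichain condition is pairwise and that $E_3$ collapses to $E_0$ on finite families --- but it depends on a chain of earlier results (\thref{theorem:plcharacterization}, \thref{theorem:prevwork}, and the two characterizations of $\ex$). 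The paper's direct construction is more self-contained and makes visible exactly how the $E_3$ data drives the partial learner. Your sketched ``computational alternative'' (dovetailing the pairwise $\ex$-learners $\learnerM_{i,j}$ via the bookkeeping of \thref{theorem:plcharacterization}) is closer in spirit to the paper's proof, though the paper works with the raw $E_3$-reduction rather than first extracting $\ex$-learners.
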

		\begin{proof}
			Suppose that $\K=\{\A_i:i \in \nats\}$ is $E_3$-learnable, and let $\Gamma$ be a continuous reduction from $\ldofk$ to $E_3$. Clearly, for any $i \neq j$, we have that $\Gamma(\A_i) \centernot{E_3} \Gamma(\A_j)$:  we define the \emph{column of disagreement between $\A_i$ and $\A_j$} as $d_{ij}\defas \min \{ t : \Gamma(\A_i)^{[t]} \centernot{E_0} \Gamma(\A_j)^{[t]} \}$ and the \emph{sequence of disagreement between $\A_i$ and $\A_j$} as $p_{ij} \in \Baire$ letting $p_{ij}(0)=0$ and
			\[p_{ij}(s+1) \defas \min \{ k> p_{ij}(s) : \Gamma(\A_i)^{[d_{ij}]}(k) \neq \Gamma(\A_j)^{[d_{ij}]}(k) \}.\]
			Clearly, for every $i \neq j$, $d_{ij}=d_{ji}$ and $p_{ij}=p_{ji}$.
			Informally, $d_{ij}$ is the index of the first column in $\Gamma(\A_i)$ that is not $E_0$-equivalent to the corresponding column of  $\Gamma(\A_j)$, 
			and $p_{ij}$ traces the elements in which the $d_{ij}$-th column of $\Gamma(\A_i)$ and the $d_{ij}$-th column of $\Gamma(\A_j)$
			disagree (since $\Gamma(\A_i)^{[d_{ij}]} \centernot{E_0} \Gamma(\A_j)^{[d_{ij}]}$, there are infinitely many such elements).
			
			We define a learner $\learnerM$ which $\pl$-learns $\K$ as follows. First, $\learnerM$ for every $i$ outputs the conjecture $\code{\A_i}$ at least one time. Then for $n>1$, given $\esse \in \ldofk$, for every $i$, $\learnerM$ outputs the conjecture $\code{\A_i}$ for the $n$-th time if and only if for every $m < n$ such that $m\neq i$,
			\[
			(\exists k_m) (\forall t < n) (\Gamma(\esse)^{[d_{im}]}(p_{im}(k_m+t)) = \Gamma(\A_i)^{[d_{im}]}(p_{im}(k_m+t))).
			\]
			Informally, $\learnerM$ in order to decide to output $\code{\A_i}$ for the $n$-th time checks whether $\A_i$ is \lq\lq similar enough\rq\rq\ with $\esse$ compared to the first $n$-many structures. To do so, it checks if $\Gamma(\esse)$ agrees with $\Gamma(\A_i)$ on the first $n$-many columns of disagreement for $n$-many consecutive elements indexed by the corresponding sequence of disagreement. 
			
			The following easy observation is important to conclude the proof. If $\esse \cong \A_i$, then $\Gamma(\esse) \ E_3 \ \Gamma(\A_i)$, and hence, in particular, the $d_{ij}$-th columns, $j \in \nats$, of $\Gamma(\esse)$ and $\Gamma(\A_i)$ are all $E_0$-equivalent as well. 
			
			Suppose that $\esse \cong \A_i$: we need to show that $\code{\A_i}$ is the only conjecture outputted by $\learnerM$ infinitely often. To do so it suffices to show that:
			\begin{enumerate}[(i)]
				\item for every $n$, $\learnerM$ outputs at least $n$ many times the conjecture $\code{\A_i}$ and
				\item for every $j \neq i$, $\learnerM$ outputs only finitely many time the conjecture $\code{\A_j}$.
			\end{enumerate}
			To prove \textbf{(i)}, fix $n$ and notice that for every $m < n$ by inspecting larger and larger initial segments of $\Gamma(\esse)^{[d_{im}]}$, there will be a stage such that $\Gamma(\esse)$ and $\Gamma(\A_i)$ agree on $n$-many consecutive elements indexed by the sequence of disagreement $p_{im}$ (indeed, we have already observed that by the definition of $E_0$-equivalence, every column of $\Gamma(\esse)$ is eventually equivalent to the corresponding one in $\Gamma(\A_i)$): at such a stage $\learnerM$ outputs $\code{\A_i}$ for the $n$-th time. 
			
			To prove \textbf{(ii)}, consider the column of disagreement $d_{ij}$ and notice that \[\Gamma(\esse)^{[d_{ij}]} E_0 \Gamma(\A_i)^{[d_{ij}]} \centernot{E_0} \Gamma(\A_j)^{[d_{ij}]}.\] Hence there is some $k$ such that for all $k'>k$ 
			\[\Gamma(\esse)^{[d_{ij}]}(p_{ij}(k')) = \Gamma(\A_i)^{[d_{ij}]}(p_{ij}(k')) \neq \Gamma(\A_j)^{[d_{ij}]}(p_{ij}(k')),\]
			i.e., $\Gamma(\esse)^{[d_{ij}]}$ disagrees with all elements greater than $k$ indexed by the sequence of disagreement $p_{ij}$. This means that $\learnerM$ outputs the conjecture $\code{\A_j}$ at most $\max\{k+1,i+1\}$ times, and this concludes the proof.
		\end{proof}
		
		Combining \thref{proposition:E3pl} and \thref{theorem:prevwork} we obtain the following corollary.
		\begin{corollary}
			\thlabel{corollary:e0pl1}
			$\ex \learnequiv{\finitary} \pl$ and
			$\ex \strictlearnreducible{} \pl$
		\end{corollary}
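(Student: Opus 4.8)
The plan is to obtain both assertions by chaining \thref{proposition:E3pl} and \thref{theorem:prevwork} with the coincidence $\ex \learnequiv{} E_0$ of \cite[Theorem 3.1]{bazhenov2021learning} and with the syntactic characterizations of $\ex$- and $\pl$-learnability.

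First I would establish $\ex \learnreducible{} \pl$. By \thref{theorem:prevwork} we have $E_0 \strictlearnreducible{} E_3$, hence $E_0 \learnreducible{} E_3$, and by \thref{proposition:E3pl} we have $E_3 \learnreducible{} \pl$; since $\ex$-learnability coincides with $E_0$-learnability and $\learnreducible{}$ is transitive, this yields $\ex \learnreducible{} \pl$. The same fact can be read off the syntactic side: an $\ex$-learnable family is a $\sigmainf{2}$-strong antichain, hence a $\sigmainf{2}$-antichain by \thref{lemma:easyposet}(a), hence $\pl$-learnable by \thref{theorem:plcharacterization}.

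Next, for $\ex \learnequiv{\finitary} \pl$ the direction $\ex \learnreducible{\finitary} \pl$ is immediate from the previous paragraph. For the converse $\pl \learnreducible{\finitary} \ex$ I would argue syntactically: by \thref{theorem:plcharacterization} a finite $\pl$-learnable family is a $\sigmainf{2}$-antichain, by \thref{lemma:easyposet}(c) a finite $\sigmainf{2}$-antichain is a $\sigmainf{2}$-strong antichain, and therefore by \cite[Theorem 3.1]{bazhenov2020learning} it is $\ex$-learnable; thus $\ex \learnequiv{\finitary} \pl$. (One could also package this as $\ex \learnequiv{\finitary} E_0 \learnequiv{\finitary} E_3 \learnreducible{\finitary} \pl$ via \thref{theorem:prevwork} and \thref{proposition:E3pl}, together with the syntactic argument for the reverse inclusion.) Finally, for the strictness $\ex \strictlearnreducible{} \pl$ it suffices to witness $\pl \notlearnreducible{} \ex$: by \thref{theorem:prevwork} the strict reduction $E_0 \strictlearnreducible{} E_3$ supplies a family $\K$ that is $E_3$-learnable but not $E_0$-learnable, hence not $\ex$-learnable, while \thref{proposition:E3pl} makes $\K$ $\pl$-learnable; combined with $\ex \learnreducible{} \pl$ this gives $\ex \strictlearnreducible{} \pl$.

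The whole argument is bookkeeping, so there is no genuine obstacle; the one point that needs attention is that the strictness witness is necessarily an \emph{infinite} family, so one must invoke the full (not merely finitary) reduction $E_3 \learnreducible{} \pl$ of \thref{proposition:E3pl} to conclude it is $\pl$-learnable, and one must keep the finitary and non-finitary reducibilities carefully distinguished throughout.
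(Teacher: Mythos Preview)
Your proposal is correct and follows essentially the same approach as the paper: the finitary equivalence is obtained via \thref{lemma:easyposet}(c) together with the syntactic characterizations of $\ex$- and $\pl$-learnability, and the strict reduction $\ex \strictlearnreducible{} \pl$ is obtained by chaining \thref{theorem:prevwork} (specifically $E_0 \strictlearnreducible{} E_3$) with \thref{proposition:E3pl} and the identification $\ex \learnequiv{} E_0$. The paper's proof is simply a terser version of what you wrote.
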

		\begin{proof}
			The first equivalence follows from the fact $\sigmainf{2}$-strong antichains and -antichains coincide for finite classes (\thref{lemma:easyposet}$(c)$). The fact that $\ex \strictlearnreducible{} \pl$ follows from \thref{proposition:E3pl}.
		\end{proof}
		
		Since $\{\omega,\zeta\}$ is clearly not a $\sigmainf{2}$-antichain, by \thref{theorem:plcharacterization} we obtain that:
		\begin{corollary}
			\thlabel{corollary:omegazetaPart}
			$\{\omega,\zeta\}$ is not $\pl$-learnable. 
		\end{corollary}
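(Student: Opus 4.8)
The plan is to invoke \thref{theorem:plcharacterization}, which reduces the statement to the purely syntactic claim that $\{\omega,\zeta\}$ is \emph{not} a $\sigmainf{2}$-antichain; by \thref{definition:sigmainfnposet} this amounts to showing that $\thsigma{2}{\omega}$ and $\thsigma{2}{\zeta}$ are comparable under $\subseteq$ but distinct. Distinctness is immediate: the sentence $\exists x\,\forall y\,(x\leq y)$ is a $\sigmainf{2}$ sentence that holds in $\omega$ (witnessed by its least element) and fails in $\zeta$, so $\thsigma{2}{\omega}\not\subseteq\thsigma{2}{\zeta}$. Hence it remains to prove the inclusion $\thsigma{2}{\zeta}\subseteq\thsigma{2}{\omega}$, which forces $(\{\thsigma{2}{\omega},\thsigma{2}{\zeta}\},\subseteq)$ to be a two-element chain and therefore not an antichain.

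For the inclusion I would argue by substructure preservation. A closed $\sigmainf{2}$ formula has the form $\bigdoublevee_{i\in I}\exists\overline{y}_i\,\psi_i$ where each $\psi_i$ is $\piinf{\beta_i}$ with $\beta_i<2$; without loss of generality each $\psi_i$ is $\piinf{1}$, i.e.\ a countable conjunction of finitary universal formulas. Suppose $\zeta\models\varphi$ for such a $\varphi$: pick a disjunct and a witnessing tuple $\overline{b}$ in $\zeta$ with $\zeta\models\psi_{i_0}(\overline{b})$, and let $b_\ast$ be the $\leq_\zeta$-least coordinate occurring in $\overline{b}$. The key observation is that the substructure of $\zeta$ with universe $\{c:c\geq_\zeta b_\ast\}$ is order-isomorphic to $\omega$ and contains $\overline{b}$. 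Since $\piinf{1}$ formulas are preserved when passing to substructures (each conjunct $\forall\overline{z}\,\theta(\overline{b},\overline{z})$ with $\theta$ quantifier-free is inherited, as quantifier-free formulas are absolute between a structure and its substructures for tuples in the substructure), this substructure still satisfies $\psi_{i_0}(\overline{b})$. Transporting along the isomorphism with $\omega$ yields a tuple $\overline{a}$ in $\omega$ with $\omega\models\psi_{i_0}(\overline{a})$, hence $\omega\models\varphi$. This establishes $\thsigma{2}{\zeta}\subseteq\thsigma{2}{\omega}$.

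Putting the two halves together, $\{\omega,\zeta\}$ is a $\sigmainf{2}$-partial order but not a $\sigmainf{2}$-antichain, so \thref{theorem:plcharacterization} gives that $\{\omega,\zeta\}$ is not $\pl$-learnable. There is no real obstacle here; the only point needing a moment's care is to confirm that the relevant complexity level, $\piinf{1}$, genuinely consists of countable conjunctions of finitary universal formulas, so that the classical downward-preservation-to-substructures lemma applies verbatim — everything else is routine, and indeed this is why the non-antichain condition can be regarded as "clear''.
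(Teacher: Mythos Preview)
Your proposal is correct and follows exactly the paper's approach: invoke \thref{theorem:plcharacterization} after noting that $\{\omega,\zeta\}$ fails to be a $\sigmainf{2}$-antichain. The paper dismisses this last point with the single word ``clearly''; you have simply unpacked it, giving the explicit argument that $\thsigma{2}{\zeta}\subseteq\thsigma{2}{\omega}$ via downward preservation of $\piinf{1}$ formulas to the final segment $\{c:c\geq_\zeta b_\ast\}\cong\omega$, together with the separating sentence $\exists x\,\forall y\,(x\leq y)$ for the strictness.
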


		We now proceed to show that the reduction from $E_3$ to $\pl$ is strict. To do so, we consider the family of structures $\mathfrak{F}^*\defas \{\tilde{\omega},\tilde{\omega^*}\} \cup \{\tilde{L}_n: n \geq 2\}$, where $L_n$ is a finite linear order of $n$ elements.
		\thref{lemma:f*pl} and \thref{lemma:F_E_3} show respectively that $\mathfrak{F^*}$ is $\pl$-learnable and not $E_3$-learnable: together with \thref{proposition:E3pl} we obtain that $E_3\strictlearnreducible{} \pl$.
		
		\begin{lemma}
			\thlabel{lemma:f*pl}
			$\mathfrak{F}^*$ is $\pl$-learnable.
		\end{lemma}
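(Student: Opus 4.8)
The plan is to apply \thref{theorem:plcharacterization}, so that it suffices to show that $\mathfrak{F}^*$ is a $\sigmainf{2}$-antichain: for any two distinct members $\A,\B$ of $\mathfrak{F}^*$ we must exhibit a closed $\sigmainf{2}$ formula true in $\A$ but false in $\B$, and another true in $\B$ but false in $\A$. I would first assemble a handful of $\sigmainf{2}$ sentences and then run through the finitely many \emph{types} of pairs.

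For $n\geq 2$ let $\lambda_n$ be the $\sigmainf{1}$ sentence $\exists x_1\cdots\exists x_n\big(\bigwedge_{i<n}x_i<_P x_{i+1}\big)$ asserting the existence of a chain with $n$ elements, and set $\chi_n\defas\lambda_n\wedge\lnot\lambda_{n+1}$; since $\lambda_n$ is $\sigmainf{1}$ and $\lnot\lambda_{n+1}$ is $\piinf{1}$, their conjunction $\chi_n$ is $\sigmainf{2}$. In $\tilde L_n$ any chain with at least two elements lies inside the $L_n$-part, so $\tilde L_n\models\chi_n$, whereas $\tilde L_m\not\models\chi_n$ for $m\neq n$ (its longest chain has $m$ elements) and $\tilde\omega,\tilde{\omega^*}\not\models\chi_n$ (both contain arbitrarily long chains); conversely $\tilde\omega,\tilde{\omega^*}\models\lambda_{n+1}$, and $\tilde L_m\models\lambda_{n+1}$ when $m>n$, while $\tilde L_n\not\models\lambda_{n+1}$. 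This settles every pair involving some $\tilde L_n$: for $\{\tilde L_n,\tilde L_m\}$ with $n<m$ use $\chi_n$ and $\lambda_{n+1}$; for $\{\tilde L_n,\tilde\omega\}$ and $\{\tilde L_n,\tilde{\omega^*}\}$ use $\chi_n$ for one inclusion and $\lambda_{n+1}$ for the other.

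It remains to separate $\tilde\omega$ from $\tilde{\omega^*}$. Here I would use the sentence $\psi\defas\exists x\exists w\big(x<_P w\wedge\forall y(y\leq_P x\rightarrow y=x)\big)$ --- ``there is a minimal element which is not maximal'' --- together with its dual $\psi^*\defas\exists x\exists w\big(w<_P x\wedge\forall y(x\leq_P y\rightarrow y=x)\big)$; the matrix under the two existential quantifiers is a conjunction of a quantifier-free formula with a $\piinf{1}$ formula, hence $\piinf{1}$, so $\psi$ and $\psi^*$ are $\sigmainf{2}$. Writing the $\omega$-part of $\tilde\omega$ as $0<_P 1<_P\cdots$, its least element $0$ witnesses $\tilde\omega\models\psi$; but in $\tilde{\omega^*}$ the $\omega^*$-part has no minimum and each adjoined incomparable point is simultaneously minimal and maximal, so $\tilde{\omega^*}\not\models\psi$. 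Symmetrically $\tilde{\omega^*}\models\psi^*$ and $\tilde\omega\not\models\psi^*$. Thus $\psi$ witnesses $\thsigma{2}{\tilde\omega}\not\subseteq\thsigma{2}{\tilde{\omega^*}}$ and $\psi^*$ the reverse, so $\mathfrak{F}^*$ is a $\sigmainf{2}$-antichain and, by \thref{theorem:plcharacterization}, $\pl$-learnable.

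The one point that needs care --- and the main, though modest, obstacle --- is the bookkeeping of infinitary quantifier complexity: one must verify that each separating sentence genuinely sits at the $\sigmainf{2}$ level (that a finite conjunction of a $\sigmainf{1}$ and a $\piinf{1}$ sentence is $\sigmainf{2}$, and that the matrix of $\psi$ under the existentials is $\piinf{1}$). Besides that, the only thing to keep straight is how the infinite antichain of incomparable points adjoined when forming each $\tilde{L}$, $\tilde{\omega}$, $\tilde{\omega^*}$ affects minimality and maximality --- which is precisely what makes $\psi$ and $\psi^*$ fail in the structures where they should.
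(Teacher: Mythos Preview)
Your proof is correct and takes a genuinely different route from the paper. The paper proves the lemma by explicitly constructing a $\pl$-learner: it tracks the length of the longest chain seen so far and, whenever that length grows, compares how long the current minimum and maximum have persisted (reusing the idea from \thref{proposition:omega_vs_omega*}) to decide between $\tilde{\omega}$ and $\tilde{\omega^*}$; otherwise it conjectures the $\tilde{L}_n$ matching the current chain length. You instead invoke \thref{theorem:plcharacterization} and verify directly that $\mathfrak{F}^*$ is a $\sigmainf{2}$-antichain via the sentences $\chi_n$, $\lambda_{n+1}$, $\psi$, $\psi^*$. Your approach is shorter and more in keeping with the paper's overall message that the syntactic characterizations are the right tool for deciding learnability; the paper's hands-on construction, on the other hand, makes the learner explicit and is independent of \thref{theorem:plcharacterization}. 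Both are valid; there is no circularity in your use of \thref{theorem:plcharacterization}, which is proved earlier. Your complexity bookkeeping is also fine: pushing the existential block of $\lambda_n$ outside the conjunction with $\lnot\lambda_{n+1}$ gives an existential over a $\piinf{1}$ matrix, and the matrix of $\psi$ (and of $\psi^*$) is likewise $\piinf{1}$, so all four separating sentences are $\sigmainf{2}$ as claimed.
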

		\begin{proof}
			Given $\esse \in \mathrm{LD}(\mathfrak{F}^*)$, define $min_s\defas \min_{\leq_{\esse}}\{n \in \esse\restriction_s: (\exists k \in \esse\restriction_s)(n <_{\mathcal{S}} k)\}$ 
			and $max_s\defas \max_{\leq_{\esse}}\{n \in \esse\restriction_s: (\exists k \in \esse\restriction_s)(k <_{\mathcal{S}} n)\}$ and we let $c_{min}(s)\defas |\{t \leq s : min_s=min_{t}\}|$ and $c_{max}(s)\defas |\{t \leq s : max_s=max_{t}\}|$. Then we define a learner $\learnerM$ such that $\learnerM(\esse\restriction_0) \defas \ulcorner \tilde{\omega} \urcorner$ and $\learnerM(\esse\restriction_{s+1})\defas$
			\[
			\begin{cases}
				\code{\tilde{L}_n} & \text{if } 
				
				n= \max\{m: L_m \hookrightarrow \esse\restriction_{s}\}=\max\{m: L_m \hookrightarrow \esse\restriction_{s+1}\}, \\
				\code{\tilde{\omega}} & \text{if } \max\{m: L_m \hookrightarrow \esse\restriction_{s}\} \neq \max\{m: L_m \hookrightarrow \esse\restriction_{s+1}\}
				
				\land c_{min}(s)\geq c_{max}(s)\\
				\code{\tilde{\omega^*}} & \text{if } \max\{m: L_m \hookrightarrow \esse\restriction_{s}\} \neq \max\{m: L_m \hookrightarrow \esse\restriction_{s+1}\}
				\land c_{min}(s)< c_{max}(s)
			\end{cases}\]
			We claim that $\learnerM$ $\pl$-learns $\mathfrak{F}^*$. Indeed, if $\esse\cong \tilde{L}_n$ for some $n \in \nats$, we obtain that $(\exists s_0)(\forall t\geq s_0)( L_n \hookrightarrow \esse\restriction_t \land L_{n+1} \not\hookrightarrow \esse\restriction_t)$, and hence, for all $t>s_0$, $\learnerM(\esse\restriction_t)=\code{\tilde{L}_n}$, i.e., beyond stage $s_0$, the first condition of $\learnerM$'s definition always applies. Otherwise, suppose that $\esse\cong \tilde{\omega}$ (the case for $\esse \cong \tilde{\omega^*}$ is analogous). We claim that
			\begin{enumerate}[(i)]
				\item there are infinitely many $t \in \nats$ such that $\learnerM(\esse\restriction_t)=\code{\tilde{\omega}}$ and
				\item for every $i \neq \code{\tilde{\omega}}$, $|\{s: \learnerM(\esse\restriction_s)=i\}|$ is finite.
			\end{enumerate}
			Both conditions together prove that $\learnerM$ $\pl$-learns $\mathfrak{F}^*$. Notice that $\tilde{\omega}$ has a least element but not a greatest one, hence $(\exists s_0)(\forall t\geq s_0)(c_{min}(t)> c_{\max}(t))$. Item \textbf{(i)} follows by combining the previous observation and the fact that there are infinitely many $s \in \nats$ such that \[\max\{m: L_m \hookrightarrow \esse\restriction_{s}\} \neq \max\{m: L_m \hookrightarrow \esse\restriction_{s+1}\},\] i.e., the second condition of $\learnerM$'s definition holds infinitely often. To prove the item \textbf{(ii)} we have two cases:
			\begin{itemize}
			\item if $i=\code{\tilde{\omega^*}}$, then $(\forall t\geq s_0)(c_{min}(t)> c_{max}(t))$, and hence, the third condition of $\learnerM$'s definition never applies from stage $s_0$ on.
			\item Otherwise,  if  $i=\code{\tilde{L}_n}$, then $| \{s:n=\max\{m: L_m \hookrightarrow \esse\restriction_{s}\}\} | = k_n$ for some $k_n \in \nats$, and hence, the first condition of $\learnerM$'s definition applies for at most $k_n$ times.
		\end{itemize}
			This concludes the proof.
		\end{proof}
		
		\begin{lemma}
			\thlabel{lemma:F_E_3}
			$\mathfrak{F}^*$ is not $E_3$-learnable.
		\end{lemma}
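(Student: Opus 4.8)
The plan is to argue by contradiction. Suppose $\Gamma$ continuously reduces $\mathrm{LD}(\mathfrak F^*)$ to $E_3$; by \thref{lemma:folklore} write $\Gamma=\Phi^X$ for a Turing operator $\Phi$ and oracle $X$. Fix reference copies $\mathcal A_\omega\cong\tilde\omega$ and $\mathcal A_{\omega^*}\cong\tilde{\omega^*}$, and set $p:=\Gamma(\mathcal A_\omega)$ and $q:=\Gamma(\mathcal A_{\omega^*})$. Since $\tilde\omega\not\cong\tilde{\omega^*}$ we get $p\centernot{E_3}q$, so there is a column $d$ with $p^{[d]}\centernot{E_0}q^{[d]}$; let $S=\{t:p^{[d]}(t)\neq q^{[d]}(t)\}$, which is infinite. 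The engine of the proof is the combination of two facts: \emph{(a)} monotonicity of $\Phi^X$ — once $\Phi^X(\sigma)$ has output a symbol at some position, every $\esse$ extending $\sigma$ inherits that symbol in $\Gamma(\esse)$ permanently; and \emph{(b)} a copy of any $\tilde L_n$ is, at every finite stage, indistinguishable from a copy-in-progress of $\tilde\omega$ and from a copy-in-progress of $\tilde{\omega^*}$ (a finite ascending chain together with isolated points can be completed, keeping the same finite prefix, to a copy of $\tilde\omega$ by extending the chain upward, to a copy of $\tilde{\omega^*}$ by extending it downward, or to a copy of $\tilde L_m$ for any $m$ at least the current chain length by freezing the chain and only adding isolated points). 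This is exactly what the $\tilde{\cdot}$ operation is there to enable.

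I would then build a copy $\esse\in\mathrm{LD}(\mathfrak F^*)$ adaptively in rounds, steering it among ``$\tilde\omega$-mode'' (extend the chain upward, which, by continuity, drags $\Gamma(\esse)$ towards $p$), ``$\tilde{\omega^*}$-mode'' (extend downward, dragging $\Gamma(\esse)$ towards $q$) and ``freeze-mode'' (add only isolated points). In each round, one detours through a helper copy of the relevant reference structure long enough for $\Phi^X$ to commit, by monotonicity, a fresh column-$d$ value at some position of $S$ that matches $p^{[d]}$ (resp.\ $q^{[d]}$); by $(b)$ the detour can always be made to start from, and be abandoned back to, an ambiguous ascending-chain prefix. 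The intended dichotomy is: if $\esse$ ends up $\cong\tilde\omega$ (or $\cong\tilde{\omega^*}$) then the accumulated commitments force $\Gamma(\esse)^{[d]}$ to disagree with $p^{[d]}$ (resp.\ $q^{[d]}$) infinitely often, contradicting $\Gamma(\esse)\,E_3\,p$ (resp.\ $E_3\,q$); while if $\esse$ ends up $\cong\tilde L_n$ then $\Gamma(\esse)^{[d]}$ has been forced to agree on an arbitrarily long initial segment with both $p^{[d]}$ and $q^{[d]}$, which is impossible since these already disagree at a fixed position of $S$. In each case we contradict that $\Gamma$ is a reduction.

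The hard part — and the point where the bookkeeping must be done carefully — is keeping $\esse$ inside $\mathrm{LD}(\mathfrak F^*)$. Naively alternating between $\tilde\omega$-mode and $\tilde{\omega^*}$-mode infinitely often produces a chain of order type $\zeta$, so $\esse$ becomes a copy of a structure outside the family, and then $\Gamma$ incurs no obligation and no contradiction survives; on the other hand, using only finitely many ``downward'' extensions yields only finitely many committed column-$d$ disagreements, which is not enough to break $E_0$. Reconciling these — arranging the rounds so that infinitely many column-$d$ commitments accumulate yet the chain of $\esse$ still converges to one of $\tilde\omega$, $\tilde{\omega^*}$ or some $\tilde L_n$, with a careful choice at each round of which position of $S$ to target so that the already-committed bits of $\Gamma(\esse)$ cannot be ``explained away'' by a valid reduction — is the crux. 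Everything else (continuity, monotonicity of $\Phi^X$, existence of the disagreement column, and the completion properties of finite ascending chains) is routine.
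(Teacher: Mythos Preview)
Your proposal is incomplete, and you yourself flag the gap: you do not know how to arrange the rounds so that $\esse$ stays inside $\mathrm{LD}(\mathfrak F^*)$ while still accumulating infinitely many column-$d$ disagreements. This is not a bookkeeping detail that can be swept into ``the crux''; the way you have set things up, it is genuinely unresolvable. Any use of $\tilde{\omega^*}$-mode that forces a fresh commitment in column $d$ requires extending the chain downward by at least one new element (otherwise the prefix is still compatible with $\tilde\omega$ and there is no reason $\Phi^X$ should produce a value matching $q^{[d]}$ rather than $p^{[d]}$). So infinitely many such commitments force infinitely many downward extensions, and then either the chain has order type $\zeta$ or you also made infinitely many upward extensions and again get $\zeta$. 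Finitely many downward moves give only finitely many $q$-commitments, which, as you note, do not break $E_0$.

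The missing idea is to forget $\tilde{\omega^*}$ entirely in the construction and instead exploit the $\tilde L_n$'s. Since $p^{[d]}\centernot{E_0}q^{[d]}$, each $\Gamma(\tilde L_n)^{[d]}$ fails to be $E_0$-equivalent to at least one of $p^{[d]},q^{[d]}$; hence for one of them, say $p^{[d]}=\Gamma(\tilde\omega)^{[d]}$, there are infinitely many indices $k_0<k_1<\cdots$ with $\Gamma(\tilde L_{k_s})^{[d]}\centernot{E_0}p^{[d]}$. Now build $\esse$ by, at stage $s$, extending the current ascending chain upward to one of length $k_s$ and then entering freeze-mode (only adding isolated points) so that the structure is, for as long as you like, an honest copy-in-progress of $\tilde L_{k_s}$. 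Because $\Gamma$ must send every copy of $\tilde L_{k_s}$ to something whose $d$-th column is $\centernot{E_0}\,p^{[d]}$, continuity forces a fresh index $n_{s}>n_{s-1}$ with $\Gamma(\esse)^{[d]}(n_s)\neq p^{[d]}(n_s)$; once this commitment appears, move to stage $s+1$. The chain is only ever extended upward, so in the limit $\esse\cong\tilde\omega$, yet $\Gamma(\esse)^{[d]}\centernot{E_0}p^{[d]}$: contradiction. This is exactly the paper's argument, and it sidesteps the $\zeta$ obstruction altogether.
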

		\begin{proof}
			Towards a contradiction, assume that $\mathfrak{F}^*$ is $E_3$-learnable, and let the continuous reduction from $\ldofk$ to $E_3$ be witnessed by $\Gamma$. Without loss of generality, we may assume that $\Gamma(\tilde{\omega})^{[0]}\ \centernot{E_0}\ \Gamma(\tilde{\omega^*})^{[0]}$ (otherwise, just take the first column in which $\Gamma(\tilde{\omega})$ and $\Gamma(\tilde{\omega^*})$ disagree: such a column, by the definition of reduction, must exist). Notice also that for every $n>1$ and for every $\esse \in \ldofk$, the following holds: if $\esse \cong \tilde{L}_n$, then   $\Gamma(\tilde{L}_n)^{[0]}\ E_0\ \Gamma(\esse)^{[0]}$. 
			
			Since $\Gamma(\tilde{\omega})^{[0]}\ \centernot{E_0}\ \Gamma(\tilde{\omega^*})^{[0]}$, for every $\tilde{L}_n$, $\Gamma(\tilde{L}_n)^{[0]}\ \centernot{E_0}\ \Gamma(\tilde{\omega})^{[0]}$ or $\Gamma(\tilde{L}_n)^{[0]}\ \centernot{E_0}\ \Gamma(\tilde{\omega^*})^{[0]}$. Hence at least one set between $\{\tilde{L}_n : \Gamma(\tilde{L}_n)^{[0]}\ \centernot{E_0}\ \Gamma(\tilde{\omega})^{[0]}\}$ and $\{\tilde{L}_n : \Gamma(\tilde{L}_n)^{[0]}\ \centernot{E_0}\ \Gamma(\tilde{\omega^*})^{[0]}\}$ is infinite: without loss of generality assume the first one to be infinite, and let the elements in it be indexed as $\tilde{L}_{k_0}, \tilde{L}_{k_1}, \dots$. 
			
			We inductively define a copy $\esse$ of $\tilde{\omega}$ as follows. At stage $0$ add inside $\esse$ an isomorphic copy of $L_{k_0}$, and define $n_0\defas 0$. At the beginning of stage $s+1$, we assume that $\esse$ consists of an isomorphic copy of $L_{k_s}$ and possibly some elements incomparable with it. Then we extend the copy of $L_{k_s}$ with $(k_{s+1}-k_{s})$ fresh elements so that $\esse$ consists of an isomorphic copy of $L_{k_{s+1}}$ (where these fresh elements are added as greater than the existing elements of $L_{k_s}$) and some incomparable to the elements of $L_{k_{s+1}}$. Keep adding incomparable elements to $\esse$ and notice that, if we continue in this fashion, then $\esse\cong \tilde{L}_{k_{s+1}}$. Hence, since $\Gamma(\tilde{L}_{k_{s+1}})^{[0]}\ \centernot{E_0}\ \Gamma(\tilde{\omega})^{[0]}$, there must be an index $n_{s+1} > n_{s}$ such that  $\Gamma(\esse)^{[0]}(n_{s+1}) \neq  \Gamma(\tilde{\omega})^{[0]}(n_{s+1})$. When we find such an index, we proceed to stage $s+2$.
			
			To conclude the proof, notice that in the limit, we have that $\esse \cong \tilde{\omega}$. On the other hand, the construction of $\esse$ ensures that $\Gamma(\esse)^{[0]} \centernot{E_0} \Gamma(\tilde{\omega})^{[0]}$ as, at each stage $s$, we find a new input on which  $\Gamma(\esse)^{[0]}$ and  $\Gamma(\tilde{\omega})^{[0]}$ disagree. Therefore, the map $\Gamma$ cannot witness the $E_3$-learnability of $\mathfrak{F}^*$. This concludes the proof.
		\end{proof}

		\begin{corollary}
			$E_3  \strictlearnreducible{} \pl$.
		\end{corollary}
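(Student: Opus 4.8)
The plan is to assemble this corollary directly from the three results immediately preceding it, so no new construction is required. First I would invoke Proposition~\thref{proposition:E3pl}, which already gives the non-strict reduction $E_3 \learnreducible{} \pl$: every $E_3$-learnable family is $\pl$-learnable. Thus the only remaining task is to exhibit a family witnessing that the reduction is proper, i.e.\ a family that is $\pl$-learnable but not $E_3$-learnable.

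The witness is the family $\mathfrak{F}^* = \{\tilde{\omega},\tilde{\omega^*}\} \cup \{\tilde{L}_n : n \geq 2\}$ introduced just above. By Lemma~\thref{lemma:f*pl}, $\mathfrak{F}^*$ is $\pl$-learnable, and by Lemma~\thref{lemma:F_E_3}, $\mathfrak{F}^*$ is not $E_3$-learnable. Putting these together, $\mathfrak{F}^*$ certifies $\pl \notlearnreducible{} E_3$, and combined with $E_3 \learnreducible{} \pl$ from Proposition~\thref{proposition:E3pl} this yields $E_3 \strictlearnreducible{} \pl$, as claimed.

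There is essentially no obstacle at the level of the corollary itself: all the work has been discharged by Proposition~\thref{proposition:E3pl}, Lemma~\thref{lemma:f*pl}, and Lemma~\thref{lemma:F_E_3}. If I were writing the surrounding development from scratch, the genuinely hard step would be Lemma~\thref{lemma:F_E_3} — showing $\mathfrak{F}^*$ is not $E_3$-learnable requires the diagonalization against an arbitrary continuous reduction $\Gamma$, using that $\Gamma(\tilde\omega)$ and $\Gamma(\tilde{\omega^*})$ must already disagree on some column while infinitely many $\tilde{L}_n$ are forced to disagree with one of them on that column, and then building a copy of $\tilde\omega$ that escapes $\Gamma$ by repeatedly masquerading as longer and longer finite linear orders. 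But for the corollary as stated, the proof is simply the one-line combination:

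\begin{proof}
	By Proposition~\thref{proposition:E3pl} we have $E_3 \learnreducible{} \pl$. The family $\mathfrak{F}^*$ is $\pl$-learnable by Lemma~\thref{lemma:f*pl} but not $E_3$-learnable by Lemma~\thref{lemma:F_E_3}, so $\pl \notlearnreducible{} E_3$. Hence $E_3 \strictlearnreducible{} \pl$.
\end{proof}
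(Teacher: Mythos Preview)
Your proposal is correct and matches the paper's approach exactly: the paper explicitly sets up the corollary by stating that \thref{lemma:f*pl} and \thref{lemma:F_E_3} show $\mathfrak{F}^*$ is $\pl$-learnable but not $E_3$-learnable, and that together with \thref{proposition:E3pl} this yields $E_3 \strictlearnreducible{} \pl$. The corollary in the paper is stated without its own proof environment precisely because, as you note, it is the immediate combination of these three results.
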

		We mention that \cite[Theorem 5.4]{bazhenov2021learning} provides a syntactic characterization of $E_3$-learnability.

		We conclude this section by stating explicitly the relation between $\pl$- and $\eset$-learning for finite families. 
		
		\begin{theorem}{\cite{ciprianimarconesanmauro}}
			\thlabel{theorem:esetcharacterization}
			A family of structures $\K$ is $\eset$-learnable if and only if $\K$ is a $\sigmainf{2}$-partial order.
		\end{theorem}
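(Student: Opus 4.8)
The plan is to run the proof of \thref{theorem:characterization_Erange} ``one quantifier level up'': replace $\sigmainf{1}$ formulas by $\sigmainf{2}$ formulas and the \emph{range} of the reduction by its \emph{set of columns} --- precisely the passage that turns $\erange$ into $\eset$. Recall a $\sigmainf{2}$ sentence has the shape $\bigdoublevee_{l}\exists\overline{y}_l\,\psi_l(\overline{y}_l)$ with each $\psi_l$ a $\piinf{1}$ formula; whereas satisfaction of a $\sigmainf{1}$ formula is witnessed inside a finite substructure, satisfaction of such a $\psi_l$ is a ``$\Pi^0_1$ event'' --- it holds iff no finite substructure ever refutes it --- which is exactly the behaviour an $E_0$-class on a single column can record.

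\emph{Sufficiency (right to left).} Suppose $\K=\{\A_i:i\in\nats\}$ is a $\sigmainf{2}$-partial order. For each ordered pair $(i,j)$ with $\thsigma{2}{\A_i}\not\subseteq\thsigma{2}{\A_j}$ fix a $\sigmainf{2}$ sentence $\varphi_{ij}=\bigdoublevee_{l}\exists\overline{y}_l\,\psi_{ij,l}(\overline{y}_l)$, each $\psi_{ij,l}$ a $\piinf{1}$ formula, with $\A_i\models\varphi_{ij}$ and $\A_j\not\models\varphi_{ij}$. I would build $\Gamma:\ldofk\to\nats^{\nats\times\nats}$ as a Turing operator relative to an oracle coding all these formulas, so that $\Gamma$ is continuous by \thref{lemma:folklore}. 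The columns of $\Gamma(\esse)$ come in three kinds: (i) for each such pair $(i,j)$, disjunct $l$, and candidate witness tuple $\overline{b}$, a \emph{verification column} which outputs the label $\str{i,j}$ as long as $\psi_{ij,l}(\overline{b})$ has not been refuted within $\esse\restriction_s$ and the constant $0$ from the stage a refutation appears; (ii) for each pair $(i,j)$ and each $n\in\nats$, a \emph{padding column} equal to $\str{i,j}$ on its first $n$ positions and $0$ afterwards; (iii) one column constantly $0$. The padding columns ensure a refuted verification column contributes no new column, so the set of columns of $\Gamma(\esse)$ is determined solely by $\{(i,j):\esse\models\varphi_{ij}\}$: the constant column $\str{i,j}^{\nats}$ occurs in $\Gamma(\esse)$ iff $\esse\models\varphi_{ij}$. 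If $\esse\cong\A_i$, all the $\varphi_{i'j'}$ have the same truth value in $\esse$ and $\A_i$, so $\Gamma(\esse)\ \eset\ \Gamma(\A_i)$. If $\esse\cong\A_k$ with $k\neq i$, then $\thsigma{2}{\A_i}\neq\thsigma{2}{\A_k}$: either $\thsigma{2}{\A_i}\not\subseteq\thsigma{2}{\A_k}$, so $\str{i,k}^{\nats}$ is a column of $\Gamma(\A_i)$ but not of $\Gamma(\esse)$, or $\thsigma{2}{\A_k}\not\subseteq\thsigma{2}{\A_i}$, so $\str{k,i}^{\nats}$ is a column of $\Gamma(\esse)$ but not of $\Gamma(\A_i)$; either way $\Gamma(\esse)\ \centernot{\eset}\ \Gamma(\A_i)$. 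Hence $\Gamma$ witnesses $\eset$-learnability.

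\emph{Necessity (left to right).} I would argue the contrapositive. Suppose $\K$ is not a $\sigmainf{2}$-partial order, say $\thsigma{2}{\A_i}=\thsigma{2}{\A_j}$ with $i\neq j$, and suppose toward a contradiction that $\Gamma$ reduces $\ldofk$ to $\eset$. Since $\A_i\not\cong\A_j$ we have $\Gamma(\A_i)\ \centernot{\eset}\ \Gamma(\A_j)$, so without loss of generality some column $w:=\Gamma(\A_i)^{[z_0]}$ is not a column of $\Gamma(\A_j)$. Consider, for $\esse\in\ldofk$, the predicate $P(\esse)\equiv$ ``some column of $\Gamma(\esse)$ equals $w$''. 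As $\Gamma$ is a reduction, $P$ holds at every copy of $\A_i$, fails at every copy of $\A_j$, and depends only on the isomorphism type of $\esse$. Writing $\Gamma=\Phi^X$ by \thref{lemma:folklore} and using that on $\ldofk$ the operator produces a total infinite output, $P$ restricted to $\ldofk$ is a $\Sigma^0_2$ predicate relative to $X$: ``there is $z$ such that no finite substructure read so far forces a value on column $z$ disagreeing with $w$''. An $X$-effective, $\Sigma^0_2$, isomorphism-invariant property of countable structures is axiomatised by a $\sigmainf{2}$ sentence (the back-and-forth / Lopez--Escobar machinery, cf.\ \cite{AK00,montalban2005beyond}), giving a $\sigmainf{2}$ sentence $\varphi$ with $\A_i\models\varphi$ and $\A_j\not\models\varphi$ --- contradicting $\thsigma{2}{\A_i}=\thsigma{2}{\A_j}$.

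The last direction is where the real work lies. In \thref{theorem:characterization_Erange} the ``bad event'' --- $z$ entering the range of $\Gamma(\esse)$ --- is witnessed at a finite stage, so one waits for it and then finishes $\esse$ as a copy of $\A_j$; here a column is an infinite object and no such finite wait exists, forcing the complexity bookkeeping above. The delicate points are (a) verifying that, once attention is restricted to $\ldofk$ where $\Gamma$ is total, $P$ really sits at $\Sigma^0_2$ rather than leaking to $\Sigma^0_3$, and (b) replacing $P$ by an honestly isomorphism-invariant class before invoking the Lopez--Escobar-type translation. An alternative, more self-contained route bypasses this: $\thsigma{2}{\A_i}=\thsigma{2}{\A_j}$ yields a full $1$-back-and-forth system between $\A_i$ and $\A_j$, which one can use to build, against any candidate $\Gamma$, a copy on which $\Gamma$ provably errs; setting up the construction so that the error is forced is the crux on that route as well.
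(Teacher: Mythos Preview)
The paper does not prove this theorem at all: it is stated with a citation to \cite{ciprianimarconesanmauro} and then used as a black box to derive the next proposition. So there is no proof in the paper to compare your attempt against.

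Assessing your proposal on its own merits: the sufficiency direction is essentially correct. The ``verification column plus padding column'' construction is exactly the right way to lift the $\erange$ argument one level --- the padding columns absorb the finitely-long $\str{i,j}$-prefixes produced by eventually refuted $\piinf{1}$ witnesses, so the set of columns of $\Gamma(\esse)$ is determined precisely by $\{(i,j):\esse\models\varphi_{ij}\}$, and the verification that this is a reduction goes through as you wrote.

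The necessity direction is where your write-up is a sketch rather than a proof, and you say as much. The predicate $P(\esse)\equiv\exists z\,\forall n\,\Gamma(\esse)^{[z]}(n)=w(n)$ is indeed $\Sigma^0_2$ relative to the oracle and $w$, and it separates copies of $\A_i$ from copies of $\A_j$. The gap you flag in (b) is real: the level-by-level Lopez--Escobar/Vaught theorem needs an isomorphism-invariant $\mathbf{\Sigma}^0_2$ set, and naive saturation $\bigcap_{\pi}\pi^{-1}(C)$ blows the complexity up to $\mathbf{\Pi}^0_3$. What actually closes the argument is the fact (standard in the Ash--Knight/Montalb\'an framework you cite) that $\thsigma{2}{\A_i}=\thsigma{2}{\A_j}$ is equivalent to $\A_i\equiv_2\A_j$ in the back-and-forth sense, and this equivalence implies that no $\mathbf{\Sigma}^0_2$ set can contain every copy of $\A_i$ while missing every copy of $\A_j$. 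Your ``alternative route'' via a direct back-and-forth construction against $\Gamma$ is really the same fact unpacked. Either way the necessity direction needs that one extra ingredient spelled out; once it is, your plan goes through.
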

		Combining \thref{theorem:plcharacterization} and \thref{theorem:esetcharacterization} we obtain the following corollary.
		\begin{proposition}
			$\pl\strictlearnreducible{\finitary} \eset$.
		\end{proposition}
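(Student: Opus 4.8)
The plan is to read the statement off directly from the two characterizations \thref{theorem:plcharacterization} and \thref{theorem:esetcharacterization}, combined with the trivial implications recorded in \thref{lemma:easyposet}.

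For the reducibility $\pl\learnreducible{\finitary}\eset$ (in fact $\pl\learnreducible{}\eset$ for all families), I would argue as follows. Suppose $\K$ is $\pl$-learnable. By \thref{theorem:plcharacterization}, $\K$ is a $\sigmainf{2}$-antichain; by \thref{lemma:easyposet}$(b)$, $\K$ is then a $\sigmainf{2}$-partial order; by \thref{theorem:esetcharacterization}, $\K$ is $\eset$-learnable. Restricting attention to finite families yields $\pl\learnreducible{\finitary}\eset$.

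For strictness it suffices to exhibit a finite family that is $\eset$-learnable but not $\pl$-learnable, and the natural candidate is $\K=\{\omega,\zeta\}$. That $\K$ is not $\pl$-learnable is precisely Corollary~\thref{corollary:omegazetaPart}; via \thref{theorem:plcharacterization} this reflects the fact that $\K$ fails to be a $\sigmainf{2}$-antichain, which for a two-element family means that the $\sigmainf{2}$-theories of its members are comparable, namely $\thsigma{2}{\zeta}\subseteq\thsigma{2}{\omega}$. On the other hand, $\omega\models\exists x\,\forall y\,(x\leq_\omega y)$ while $\zeta$ has no least element, and the displayed sentence is $\sigmainf{2}$; hence $\thsigma{2}{\omega}\neq\thsigma{2}{\zeta}$, so the two-element family $\K$ is a $\sigmainf{2}$-partial order, and \thref{theorem:esetcharacterization} gives that $\K$ is $\eset$-learnable. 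Combining the two halves yields $\pl\strictlearnreducible{\finitary}\eset$.

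Since the proposition is essentially a formal consequence of the earlier characterizations, the only point requiring genuine verification is the containment $\thsigma{2}{\zeta}\subseteq\thsigma{2}{\omega}$ underlying Corollary~\thref{corollary:omegazetaPart}. I would check it by noting that $\omega$ and $\zeta$ realize exactly the same quantifier-free types of finite tuples (so in particular they satisfy the same $\piinf{1}$ formulas), and that if $\bar b$ is a witness in $\zeta$ for a $\sigmainf{2}$ disjunct $\exists\bar y\,\psi(\bar y)$ with $\psi$ a $\piinf{1}$ formula, then every quantifier-free extension of the order type of $\bar b$ is consistent with $\psi$ (all such extensions being realized in $\zeta$); choosing a sufficiently spread-out tuple $\bar b'$ in $\omega$ of the same order type, the quantifier-free extensions realized over $\bar b'$ in $\omega$ form a subset of those, so $\omega\models\psi(\bar b')$, hence $\omega\models\exists\bar y\,\psi(\bar y)$, and this propagates through the disjunction defining an arbitrary $\sigmainf{2}$ sentence. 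I expect no real obstacle beyond this routine model-theoretic observation.
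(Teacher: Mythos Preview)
Your proof is correct and follows exactly the approach the paper takes: the proposition is stated as an immediate consequence of \thref{theorem:plcharacterization} and \thref{theorem:esetcharacterization}, with the separating finite family being $\{\omega,\zeta\}$ (set up via \thref{corollary:omegazetaPart}). Your extra verification of $\thsigma{2}{\zeta}\subseteq\thsigma{2}{\omega}$ goes beyond what the paper spells out (it simply declares the failure of the $\sigmainf{2}$-antichain condition ``clear''), but one phrase in your sketch is off: a ``sufficiently spread-out'' $\bar b'$ in $\omega$ is the wrong choice, since larger gaps allow \emph{more} quantifier-free extensions over $\bar b'$, not fewer. The clean argument is to take $\bar b'$ with \emph{exactly the same gap pattern} as $\bar b$ (equivalently, use the embedding $n\mapsto n+\min_j b_j$ of $\omega$ into $\zeta$, which has $\bar b$ in its range, and pull back); then every quantifier-free type realized over $\bar b'$ in $\omega$ is the image under this embedding of a type realized over $\bar b$ in $\zeta$, so the $\piinf{1}$ formula transfers.
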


		\section{Conclusions}
		\label{sec:conclusions}
		This paper made additional contributions to the field of algorithmic learning for algebraic structures. Explanatory learning has so far received the most attention within the framework presented in \thref{definition:paradigm}. Here, we proposed the investigation of additional well-known learning paradigms, placing such paradigms in the hierarchy offered by the recent notion of $E$-learnability: a summary of the results can be found in the figure below.
		
		\cite[Theorem 3.1]{bazhenov2020learning} provided a syntactic characterization of $\ex$-learning using what is now referred to as a $\sigmainf{2}$-strong antichain, which proved to be crucial in determining when a family of structures is $\ex$-learnable. We successfully provided a syntactic characterization for all the new learning paradigms we introduced, and these characterizations turned out to be natural, as they can be described in terms of the inclusion of $\sigmainf{n}$-theories for $n \in \{1,2\}$. The existence of such natural characterizations demonstrates that both the learning paradigms derived from classical algorithmic learning theory and the $E$-learnabilities we explored are indeed natural.
		
		We suggest as further direction the one of exploring (and trying to characterize syntactically) other classical learning paradigms that were not considered in this paper. 
		We also mention that the study of this learning hierarchy is just at the beginning and we plan to address these and further directions in future studies. 
		
		\begin{figure}[H]
			
			\begin{tikzpicture}[scale=0.6]
				
				\node[draw, fill=gray!20] (a) at (0,0) {$E_{set}$  \; \begin{small} $(\Sigma^{\mathrm{inf}}_2 \, \text{-p.)}$ \end{small}};
				\node[draw, fill=gray!20] (b) at (0,-1.5) {$\mathbf{PL} $ \; \begin{small} $(\Sigma^{\mathrm{inf}}_2 \, \text{-a.)}$ \end{small}};
				\node[draw, fill=gray!20] (c) at (0,-3) {$E_3$ \; \begin{small}
		(\cite[Theorem 5.4]{bazhenov2021learning})\end{small}		
			
		};
				\node[draw, fill=gray!20] (d0) at (-4,-4.5) {$\erange$ \; \begin{small} $(\Sigma^{\mathrm{inf}}_1 \, \text{-p.)}$ \end{small}};
				\node[draw, fill=gray!20] (d1) at (4,-4.5) {$\mathbf{Ex},\, E_0, \, E_1, \, E_2$ \; \begin{small}
						$(\Sigma^{\mathrm{inf}}_2 \, \text{-s.a.}$)\end{small}};
				\node[draw, fill=gray!20] (e) at (0,-6) {$\mathbf{nUs}$, $\mathbf{Dec}$ \; \begin{small}$(solid  \ \sigmainf{1}\text{-p.})$\end{small}};
				
				\node[draw, fill=gray!20] (f) at (0,-7.5) {$\col$-learning \; \begin{small}
						$(\Sigma^{\mathrm{inf}}_1 \; \text{-a.}$)\end{small}};
				
				\node[draw, fill=gray!20] (g) at (0,-9) {$\Id$ \; \begin{small}			
	(\thref{theorem:idcharacterization})\end{small}		};
				
				\node[draw, fill=gray!20] (h) at (0,-10.5) {$\mathbf{Fin}, \, =_{\mathbb{N}}$ \; \begin{small}
						$(\Sigma^{\mathrm{inf}}_1 \, \text{-s.a.}$)\end{small}};
				
				\node[draw, fill=gray!20] (h1) at (14, -10.5) {$\mathbf{Fin}, \, =_{\mathbb{N}}, \, \Id, \, \col$-learning };
				
				\node[draw, fill=gray!20] (g1) at (14, -8.5) {$\nonushape, \, \decisive, \, \erange$ };
				
				\node[draw, fill=gray!20] (d3) at (14,-6.5) {$\mathbf{Ex},\, E_0, \, E_1, \, E_2, \, E_3, \, \pl$  };
				
				\node[draw, fill=gray!20] (a1) at (14,-4.5) {$E_{set}$  };
				
				\draw[->] (c)-- (b) -- (a);
				
				\draw[->] (d0) -- (c);
				\draw[->] (d1) -- (c);
				\draw[->] (e) -- (d0);
				\draw[->] (e) -- (d1);
				\draw[->] (h) -- (g) -- (f) -- (e);
				\draw[->] (h) -- (g);
				\draw[->] (g) -- (f);
				\draw[->] (h1) -- (g1);
				\draw[->] (g1) -- (d3);
				\draw[->] (d3) -- (a1);
			\end{tikzpicture}
			\caption{The learning paradigm considered in this paper. On the left-hand-side the picture refers to learn reducibility while the right-hand-side refers to finitary learn reducibility: The arrows represent (finite) learn reducibility in the direction of the arrow. For $n \in \{1,2\}$ $\sigmainf{n}$-a., -s.a., and -p. denote respectively $\sigmainf{n}$-antichains, -strong antichains and partial orders as defined in \thref{definition:sigmainfnposet}. }
		\end{figure}
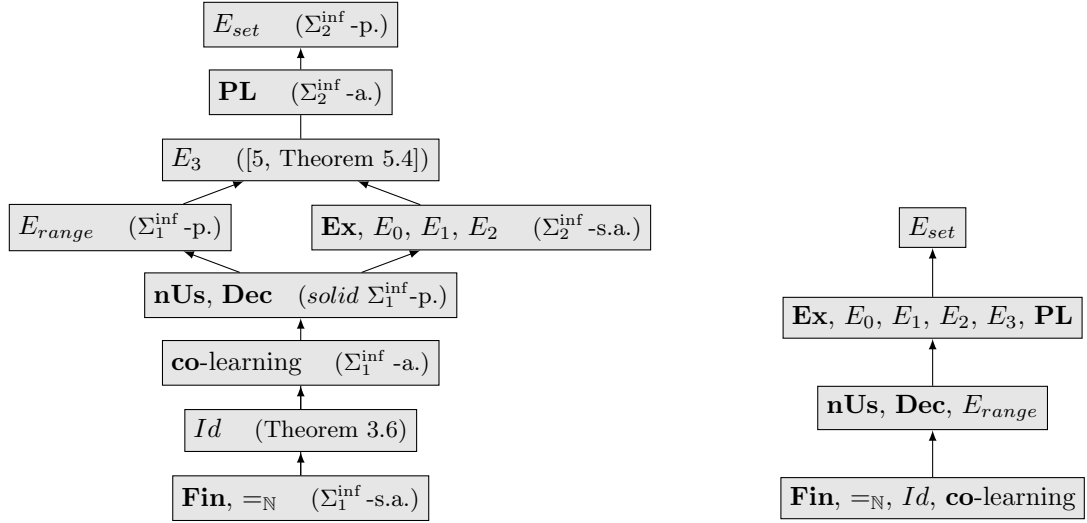

		\bibliographystyle{mbibstyle}
		\bibliography{references}
		
	\end{document}